  \pgfpointspherical{\tikz@cs@angle}{\tikz@cs@latitude}{\tikz@cs@xradius}% fix \tikz@cs@radius to \tikz@cs@xradius
\tikzset{my color/.code=\pgfmathparse{(#1+90)/180*100}\pgfkeysalso{every path/.style={color=red!\pgfmathresult!blue}}}
\DeclareRobustCommand{\SkipTocEntry}[5]{}
\newtheorem{prop}{Proposition}[section]
\newtheorem{thm}[prop]{Theorem}
\newtheorem{cor}[prop]{Corollary}
\newtheorem{lem}[prop]{Lemma}
\theoremstyle{definition}
\newtheorem{defn}[prop]{Definition}
\newtheorem{cons}[prop]{Construction}
\newtheorem{notn}[prop]{Notation}
\theoremstyle{remark}
\newtheorem{rem}[prop]{Remark}
\newtheorem{warn}[prop]{Warning}
\numberwithin{equation}{prop}
\DeclareMathOperator{\Ab}{Ab}
\DeclareMathOperator{\Cart}{Cart}
\DeclareMathOperator{\Cat}{Cat}
\DeclareMathOperator{\Ch}{Ch}
\DeclareMathOperator{\DK}{DK}
\DeclareMathOperator{\Ext}{Ext}
\DeclareMathOperator{\Fun}{Fun}
\DeclareMathOperator{\holim}{holim}
\DeclareMathOperator{\Hom}{Hom}
\DeclareMathOperator{\HomL}{Hom^{\mathrm L}}
\DeclareMathOperator{\id}{id}
\DeclareMathOperator{\Kan}{Kan}
\DeclareMathOperator{\Map}{Map}
\DeclareMathOperator{\Perf}{Perf}
\DeclareMathOperator{\QC}{QC}
\DeclareMathOperator{\QCoh}{QCoh}
\DeclareMathOperator{\res}{res}
\DeclareMathOperator{\RHom}{\mathbf RHom}
\DeclareMathOperator{\Sch}{Sch}
\DeclareMathOperator{\Set}{Set}
\DeclareMathOperator{\sk}{sk}
\DeclareMathOperator{\SMap}{\mathscr Map}
\DeclareMathOperator{\SRHom}{\mathbf R\mathscr Hom}
\DeclareMathOperator{\St}{St}
\DeclareMathOperator{\Tw}{Tw}
\DeclareMathOperator{\Un}{Un}
\newcommand{\natmap}[5]{\xymatrix{
#1 \ar@/^/[r]^{#3} \ar@/_/[r]_{#4} \ar@{}[r]|{\Downarrow_{#5}} & #2
}}
\newcommand{\adjoints}[4]{\xymatrix{#1 \ar@/^/[rr]^{#3} & \perp & #2 \ar@/^/[ll]^{#4}}}
\newcommand{\smalladjoints}[4]{\xymatrix@C=5pt{#1 \ar@/^/[rr]^{#3} & \perp & #2 \ar@/^/[ll]^{#4}}}
\newcommand{\xtworightarrows}[4]{\xymatrix{#1 \ar@<.5ex>[r]^{#3} \ar@<-.5ex>[r]_{#4} & #2}}
\newcommand{\xtwoleftarrows}[4]{\xymatrix{#1 &  \ar@<-.5ex>[l]_{#3} \ar@<.5ex>[l]^{#4}  #2}}
\newcommand{\xrightleftarrows}[4]{\xymatrix{#1 \ar@/^/[r]^{#3} & #2 \ar@/^/[l]^{#4}}}
\newcommand{\dg}[0]{\mathrm{dg}}
\newcommand{\op}[0]{\mathrm{op}}
\newcommand{\pr}[0]{\mathrm{pr}}
\theoremstyle{remark}
\newtheorem{remark}[prop]{Remark}
\theoremstyle{definition}
\newtheorem{definition}[prop]{Definition}
\theoremstyle{plain}
\newtheorem{proposition}[prop]{Proposition}
\newtheorem{corollary}[prop]{Corollary}
\newtheorem{lemma}[prop]{Lemma}
\title{On the $\infty$-stack of complexes over a scheme}
\author{A. Dhillon}
\email{adhill3@uwo.ca}
\address{Western University, Canada}
\author{P. Zs\'amboki}
\email{pzsambok@uwo.ca}
\address{Western University, Canada}
\newcommand{\bC}{{\textbf C}}
\newcommand{\bA}{{\textbf A}}
\newcommand{\sF}{{\mathcal F}}
\newcommand{\ndg}{N_{\rm dg}}
\newcommand{\msD}{{\mathscr D}}
\newcommand{\msX}{{\mathscr X}}
\newcommand{\msC}{{\mathscr C}}
\DeclareMathOperator{\st}{St}
\DeclareMathOperator{\str}{str}
\begin{document}

\begin{abstract}
{We study fppf descent for  enhanced derived categories. We revisit the work of
	\cite{hs} and \cite{toen2008homotopical} in a lax context. More precisely, we construct a
    Cartesian and coCartesian fibration ${}^\op\msD^+_S\rightarrow N(\Sch_S)$ whose fibre over an $S$-scheme $T$ is the opposite $\mathscr D^+(T)^\op$ of the quasi-category of bounded below complexes of $\mathscr O_T$-modules. We show that this fibration satisfies fppf-descent
    for schemes.
    The main components in the proof are limit formulas for the mapping spaces in the section quasi-category $\Gamma(K,\mathscr X)$ and its subcategory of Cartesian sections $\Gamma_{\Cart}(K,\mathscr X)$ of a Cartesian fibration over a quasi-category $\mathscr X\to K$. These formulas are of independent interest.
    Since our construction gives a functor of quasi-categories of complexes, it yields $\SRHom$ $\infty$-stacks with natural composition maps.
     The final section gives an explicit description of the $\infty$-group structure of the automorphism $\infty$-group of a complex.}
\end{abstract}

\maketitle

\tableofcontents

\section{Introduction}

In this article we study gluing of complexes up to quasi-isomorphism over fppf covers
of schemes. In trying to do so, we find that homotopies and higher homotopies between
quasi-isomorphisms naturally enter into the picture and we find ourselves in the world
of higher categories.

Descent for quasi-coherent sheaves has its origins in \cite{SGA1}. The stack of quasi-coherent
sheaves was first constructed here. It is important to note that we do not naturally have a functor of
quasi-coherent sheaves, but rather a psuedo-functor. In other words, for a pair of composable morphisms
of schemes
\[
U\xrightarrow{f} T\xrightarrow{g} S,
\]
the functors $(g\circ f)^*$ and $f^*\circ g^*$ are not the same but are canonically isomorphic.
The canonical isomorphisms satisfy certain other compatibilities. For this  reason it is easier to
state descent in terms of fibred categories $\QCoh\rightarrow \Sch$.

The first descent statements in derived categories
appeared in \cite{SGA4}*{Exp. Vbis}. In this article, given a fppf morphism $f:T\rightarrow S$
one expands it to its Cech nerve $T^\bullet/S$. One can then
consider the Grothendieck category of abelian sheaves on the nerve, and form its
derived category. A descent statement for sheaves and cohomology to $S$ can be proved in
this context. Note that one could consider the diagram of
derived categories associated to the Cech nerve. This category, lacks enough information
to prove descent and is markedly different from the category considered in
\cite{SGA4}. It is this gap, by suitably enhancing the derived category, that this
paper seeks to address. Furthermore, it is not shown in loc. cit. that the collection
of derived categories forms a stack.

This question was taken up
in \cite{hs}. This work introduces the notion of a Quillen presheaf. A strictification theorem is proved in the context of Quillen preseaves. The final section of this paper proves
descent in a very general setting for derived categories. These methods
were applied and extended in \cite{toen2008homotopical}.

These results described above consider various enhanced derived categories.
What if we work with ordinary derived categories? A gluable complex is a
complex whose negative self extensions vanish. These negative extension groups
are precisely the homotopy groups of the mapping spaces in the stack that we
consider below, see \ref{prop:Map in D}. Descent for universally gluable
complexes was first proved in \cite{BBD}, building upon the theory
of cohomological descent in \cite{SGA4}. Using this result, Lieblich
has shown that universally gluable complexes form an algebraic stack, see
\cite{lieblich2006moduli}.

In this article we will construct a fibration of enhanced derived categories in the form of a relative dg-nerve, see
\ref{s:fibration}. We will prove some basic properties of it, such as,
it is a presentable fibration (\ref{prop:D is Cartesian}), and it satisfies descent (\ref{thm:descent for complexes}).
In a nutshell we carry out the constructions of \cite{SGA1} for suitably enhanced derived
categories.

It has been long known that the category perfect complexes is a well behaved
generalisation of the category of vector bundles, see for instance \cite{TT}.
A first step towards studying moduli of perfect complexes is to define and prove descent
for this category, in other words show that it forms a (higher) stack.
To do so, we need to construct a sheaf of perfect complexes. The difficulty, just as in the case of quasi-coherent sheaves, is that naturally we can only construct a pseudofunctor as we will now explain. Let $T$ be an $S$-scheme. Then the category of perfect complexes on $T$ is a homotopical category, that is it can be equipped with a notion of weak equivalence, the quasi-isomorphisms. Then one can take the simplicial localization $\Perf(T)$. Let $V\xrightarrow hU\xrightarrow gT$ be morphisms of $S$-schemes. We then have derived pullback functors giving a diagram of simplicial sets
\begin{center}

\begin{tikzpicture}[xscale=4,yscale=1.5]
\node (C') at (0.5,1) {$\Perf(V)$};
\node (C) at (0,0) {$\Perf(U)$};
\node (D) at (1,0) {$\Perf(T)$.};
\path[->,font=\scriptsize,>=angle 90]
(C') edge node [left] {$Lh^*$} (C)
(D) edge node [right] {$Lg^*$} (C')
(D) edge node [above] {$L(hg)^*$} (C);
\end{tikzpicture}

\end{center}
But this diagram is only commutative up to a homotopy $(Lh^*)(Lg^*)\to L(hg)^*$. Therefore, to get an actual presheaf of simplicial sets
\[
\Sch_S\xrightarrow{\Perf}\Set_\Delta,
\]
one needs to take a strictification. This is a strict functor, which is only equivalent to the natural lax functor. Usually this construction is only made for affine $S$, from which the stack of perfect complexes on a general scheme is obtained by taking the homotopy limit along an affine open cover. This makes the structure even more inexplicit.

We get a natural explicit description using the quasi-categorical generalization of the Grothendieck construction, which is recalled in \S\ref{ss:Cartesian}. On fibres, we can take the dg-nerve, the simplices of which are by construction diagrams in the dg-category of complexes. This is recalled in \S\ref{ss:dgNerve}. Then using functorial dg-flat resolutions, we get derived pullback maps with a functorial choice of homotopies making the diagrams as above commutative (\S\ref{ss:cotorsion pairs}). Homotopy limits and the statement of descent
in a higher categorical setting is recalled in \S\ref{ss:descent}. Descent for a Cartesian fibration over the nerve of a site can be formulated using the quasi-category of Cartesian sections. In \S\ref{s:twisted}, we describe mapping spaces in section quasi-categories and their subcategories on Cartesian sections as homotopy limits of
certain diagrams. These formulas can be thought of as an extension of the description in
\cite{gepner2017lax}. This section is of independent interest and it is likely that
these homotopy limit descriptions will have other applications. The main object of study is introduced in \S\ref{s:fibration}. We construct a presentable fibration
$$
{}^\op\mathscr D_S\xrightarrow p N(\Sch)_S
$$
with simplices explicitly given as diagrams in the dg-category of complexes (\S\ref{s:fibration}). We can then formulate and check fppf descent in this setting (\S\ref{s:theorem}). The main theorem is proved in \ref{thm:descent for complexes}. The theorem has various applications,
for example we obtain a description of $QC(T)$ for an ordinary scheme. This object plays the
role of the category of quasi-coherent sheaves in derived algebraic geometry, see
\cite{ben-zvi2010integral}. The final section gives an explicit description of the $\infty$-group structure of the automorphism $\infty$-group of a complex. This topic will be expanded upon in future work.

\begin{notn}

In this paper, every scheme will be assumed to be quasi-compact and quasi-separated. Let's fix once and for all a morphism of schemes $X\xrightarrow fS$.

We will work with quasi-categories and largely follow the notation of \cite{lurie2009higher}. By an
$\infty$-category we will mean a quasi-category. See \ref{ss:quasicategories} for a brief introduction.
By an ordinary category we will mean a small quasi-category that is equivalent
to the nerve of an ordinary category, that is a set of objects and morphisms subject to the
usual conditions.

We might suppress the nerve of a 1-category from notation. That is, for example $\Sch_S$ as a simplicial set is $N(\Sch_S)$.

The $\infty$-category  of spaces will be denoted by $\mathscr S$. This is just the simplicial nerve of the simplicial category of Kan complexes.
\end{notn}

%%%%%%%%%  Background

\section{Background on quasi-categories}\label{s:background}

\subsection{Quasi-categories as $\infty$-categories}\label{ss:quasicategories}

For a more detailed exposition, we refer the reader to \cite{lurie2009higher}*{\S1}. A quasi-category $X$ is a type of simplicial set which can model an $\infty$-category. Its vertices $\Delta^0\to X$ are objects, and its edges $\Delta^1\to X$ are 1-morphisms. The 2-simplices
\begin{center}

\begin{tikzpicture}[xscale=2,yscale=1.5]
\node (C') at (0.5,1) {$y$};
\node (C) at (0,0) {$x$};
\node (D) at (1,0) {$z$.};
\path[->,font=\scriptsize,>=angle 90]
(C) edge node [left] {$f$} (C')
(C') edge node [right] {$g$} (D)
(C) edge node [above] {$h$} (D);
\end{tikzpicture}

\end{center}
we can to think of as composition diagrams. By abuse of notation, we will write $gf=h$ if $X$ has a 2-simplex as above. Note that this means that we can have $gf=h_1$ and $gf=h_2$ with $h_1\ne h_2$, that is composition is not unique. But one can check that if $X$ is a quasi-category, then we get $h_1\simeq h_2$ in this case.

For example, in the main example of the quasi-category $\mathscr D(T)$ which is recalled in \S\ref{ss:dgNerve}, a 2-simplex $\Delta^2\to\mathscr D(T)$ is given by

\begin{enumerate}

\item 3 complexes of injective $\mathscr O_T$-modules $I_0,I_1,I_2$,

\item 3 morphisms of complexes $I_0\xrightarrow{f_{01}}I_1,\,I_1\xrightarrow{f_{12}}I_2,\,I_0\xrightarrow{f_{02}}I_2$,

\item and a homotopy $I_0\xrightarrow{f_{012}}I_2[-1]$ such that $df_{012}=f_{12}f_{01}-f_{02}$.

\end{enumerate}

Note that an inner horn $\Lambda^2_1\to X$ given by $f_{01}$ and $f_{12}$ can be completed to a 2-simplex in many ways.

For a simplicial set $X$ to be a quasi-category, we need to be able to composes morphisms, and higher morphisms also. The relative notion is that of an inner fibration. A map of simplicial sets $X\xrightarrow pS$ is an \emph{inner fibration}, if it satisfies the right lifting property with respect to all inner horn inclusions $\Lambda^n_k\subset\Delta^n$ for $n\ge2$ and $0<k<n$. A simplicial set $X$ is a \emph{quasi-category}, if the canonical map $X\to\ast$ is an inner fibration. If $X$ is a quasi-category, then by $x\in X$ we mean that $x$ is a vertex: $x\in X_0$. Let $x\xrightarrow fy\xrightarrow g$ be a $\Lambda^2_1$-diagram in $X$. Then by the lifting property, it can be complexes to a 2-simplex. We will write $x\xrightarrow{gf}z$ for some composite we get this way.

As we said, composition is not unique in a quasi-category. But there are still ways to get mapping spaces and composition maps. Let $K$ and $L$ be simplicial sets. Then their \emph{join} $K\star L$ has as set of $n$-simplices
$$
(K\star L)_n=\bigsqcup_{-1\le i\le n}(K_{[0,i]}\times L_{[i+1,n]}),
$$
where we set $K_\emptyset=L_\emptyset=\ast$. Let $K$ be a simplicial set, $X$ a quasi-category, and $K\xrightarrow kX$ a diagram. Then the \emph{overcategory $X_{/p}$} has as set of $n$-simplices
$$
(X_{/p})_n=\{\Delta^n\star K\xrightarrow\sigma X:\sigma|K=k\}.
$$
Let $y\in X$ be an object and $\Delta^0\xrightarrow kX$ be its inclusion map. Then we denote $X_{/y}=X_{/k}$. Let $x\in X$ be another object. Then we define the \emph{right Hom space} $\Hom^R_X(x,y)=\{x\}\times_XX_{/y}$. Note that its set of $n$-simplices is
$$
\Hom^R_X(x,y)_n=\{\Delta^{n+1}\xrightarrow\sigma X:\sigma|\Delta^{[0,n]}=\{x\},\,\sigma|\Delta^{\{n+1\}}=\{y\}\}.
$$
One can check that $\Hom^R_X(x,y)$ is a Kan complex. It is one of the ways to define a mapping space in a quasi-category.

Let $\Delta^1\xrightarrow{y\xrightarrow fz}X$ be a morphism, and $x\in X$ another object. Then one can check that the restriction map $X_{/f}\to X_{/y}$ is a trivial fibration. Therefore, its pullback $\{x\}\times_XX_{/f}\xrightarrow r\{x\}\times_XX_{/y}$ is also a trivial fibration. Thus, it has a section $\{x\}\times_XX_{/y}\xrightarrow s\{x\}\times_XX_{/f}$. We can get a postcomposition by $f$ map as the composite
$$
f\circ\colon\Hom^R_X(x,y)=\{x\}\times_XX_{/y}\xrightarrow s\{x\}\times_XX_{/f}\to\{x\}\times_XX_{/z}=\Hom^R_X(x,z).
$$
Note that this map is not unique as it depends on the choice of the section $s$. Dually, we can define undercategories $X_{k/}$, left Hom spaces $\Hom^L_X(x,y)$, and precomposition by $f$ maps. There is a third version $\Hom_X(x,y)$ of the mapping space, with set of $n$-simplices
$$
\Hom_X(x,y)_n=\{\Delta^1\times\Delta^n\xrightarrow\sigma X:\sigma|\Delta^{\{0\}}\times\Delta^n=\{x\},\,\sigma|\Delta^{\{1\}}\times\Delta^n=\{y\}\}.
$$
One can show that the natural inclusions
$$
\Hom^L_X(x,y)\to\Hom_X(x,y)\leftarrow\Hom^R_X(x,y)
$$
are homotopy equivalences of Kan complexes \cite{lurie2009higher}*{Corollary 4.2.1.8}. Because of these equivalences, we write $\Map_X(x,y)$ to mean any Kan complex homotopy equivalent to any of these.

Let $\mathbf C$ be a category. Then its \emph{categorical nerve} $N(\mathbf C)$ is the simplicial set with set of $n$-simplices composable chains of morphisms of length $n$:
$$
N(\mathbf C)_n=\{c_0\xrightarrow{f_0}c_1\xrightarrow{f_1}\dotsb\xrightarrow{f_{n-1}}c_n\},
$$
the face maps are given by composition, and the degeneracy maps are given by identity maps. One can show that this gives a fully faithful functor
$$
\Cat\xrightarrow N\Set_\Delta.
$$
We denote its left adjoint by $\tau_1$. Let $X$ be a quasi-category. Then we can describe the category $\tau_1(X)$ as follows. The objects of $\tau_1(X)$ are the objects of $X$. Let $f,g\colon x\rightrightarrows y$ be two morphisms. Then we say that $f$ and $g$ are \emph{homotopic}, if there exists a 2-simplex in $X$ of the form
\begin{center}

\begin{tikzpicture}[xscale=2,yscale=1.5]
\node (C') at (0.5,1) {$y$};
\node (C) at (0,0) {$x$};
\node (D) at (1,0) {$y$.};
\path[->,font=\scriptsize,>=angle 90]
(C) edge node [left] {$f$} (C')
(C') edge node [right] {$\id_y$} (D)
(C) edge node [above] {$g$} (D);
\end{tikzpicture}

\end{center}
One can show that this relation is an equivalence, and that letting $\Hom_{\tau_1(X)}(x,y)$ be the set of morphisms modulo this relation, we can give a category structure to $\pi_1(X)$ using composition diagrams \cite{lurie2009higher}*{Proposition 1.2.3.8}.

Let $X\xrightarrow fY$ be a map of simplicial sets. Then $f$ is \emph{essentially surjective}, if $\tau_1(f)$ is an essentially surjective functor of categories. The map $f$ is \emph{fully faithful}, if for all $x,y\in X$, the induced map $\Map_X(x,y)\xrightarrow{f_{x,y}}\Map_Y(f(x),f(y))$ is a homotopy equivalence. The map $f$ is a \emph{categorical equivalence}, if it is both essentially surjective and fully faithful.

Using this notion, we can define the \emph{Joyal model structure} on the category $\Set_\Delta$. In it,

\begin{enumerate}

\item cofibrations are monomorphisms, and

\item weak equivalences are categorical equivalences.

\end{enumerate}

One can show that the fibrant objects in the Joyal model structure are precisely the quasi-categories \cite{lurie2009higher}*{Theorem 2.4.6.1}.

We have seen that we can get composition maps in a quasi-category, even if they are not unique. One can go further with strictification, and from a quasi-category $X$ get a simplicial category $\mathfrak C[X]$ with the same object set, and equivalent mapping spaces and composition maps. The statement uses the \emph{Bergner model structure} on the category $\Cat_\Delta$ on simplicial categories \cite{bergner2007model}*{Theorem 1.1}. In it,

\begin{enumerate}

\item Weak equivalences are \emph{DK-equivalences}, that is maps of simplicial categories $\mathbf C\xrightarrow F\mathbf D$ such that

\begin{enumerate}

\item for all $x,y\in\mathbf C$, the induced map $\Map_{\mathbf C}(x,y)\xrightarrow{F_{xy}}\Map_{\mathbf D}(Fx,Fy)$ is a weak equivalence, and

\item the induced map on the underlying categories $\pi_0\mathbf C\xrightarrow{\pi_0F}\pi_0\mathbf D$ is an equivalence of categories.

\end{enumerate}

\item Fibrations are \emph{local fibrations}, that is maps of simplicial categories $\mathbf C\xrightarrow F\mathbf D$ such that

\begin{enumerate}

\item for all $x,y\in\mathbf C$, the induced map $\Map_{\mathbf C}(x,y)\xrightarrow{F_{xy}}\Map_{\mathbf D}(Fx,Fy)$ is a Kan fibrations, and

\item for all $x\in\mathbf C$ and homotopy equivalence $Fx\xrightarrow{f}y'$ in $\mathbf D$ there exists a homotopy equivalence $x\xrightarrow fy$ in $\mathbf C$ such that $Ff=f'$.

\end{enumerate}

\end{enumerate}

Then there exists a Quillen equivalence $\smalladjoints{\Set_\Delta}{\Cat_\Delta}{\mathfrak C}{N_\Delta}$ \cite{lurie2009higher}*{Theorem 2.2.5.1}. See \cite{lurie2009higher}*{\S1.1.5} for the construction of $\mathfrak C$. Let $\mathbf C$ be a simplicial category. We call $N_\Delta(\mathbf C)$ its \emph{homotopy coherent nerve}. Let $X$ be a quasi-category. Then for all $x,y\in X$, we have a weak equivalence of simplicial sets $\Hom^R_X(x,y)\simeq\Map_{\mathfrak C[X]}(x,y)$ \cite{lurie2009higher}*{Corollary 2.2.2.10, Proposition 2.2.4.1}.

%%%%%%%%%%%%%%%%%%%%%%%%%%%%%%%%%%%%%%% Right fibrations and presheaves of spaces

\subsection{Straightening-unstraightening between right fibrations and presheaves of spaces}

Let $\Kan\subseteq\Set_\Delta$ denote the full simplicial subcategory on Kan complexes. Then the \emph{quasi-category of spaces} $\mathscr S$ is its coherent nerve: $\mathscr S=N_\Delta\Kan$. Thus, a presheaf of Kan complexes on a quasi-category $\mathscr C$ can be given as a map of simplicial sets $\mathscr C^\op\to\mathscr S$. As this includes pseudofunctors, it is very difficult to define presheaves like this. Therefore, we employ the quasi-categorical generalization of the Grothendieck construction. The quasi-categorical generalization of the notion of a fibred category is the notion of a right fibration. A morphism of simplicial sets $X\to S$ is a \emph{right fibration}, if it satisfies the right lifting property with respect to the horn inclusions $\Lambda^n_k\subset\Delta^n$ for $n\ge1$ and $0<k\le n$.

Let $S$ be a simplicial set. The overcategory $(\Set_\Delta)_{/S}$ can be equipped with the \emph{contravariant model structure}. This is a left proper, combinatorial, simplicial model category \cite{lurie2009higher}*{Propositions 2.1.4.7 and 2.1.4.8} in which

\begin{enumerate}

\item A cofibration is a monomorphism.

\item An $S$-morphism of simplicial sets $X\to Y$ is a \emph{contravariant equivalence}, if the induced map
$$
S\sqcup_XX^\vartriangleright\to S\sqcup_YY^\vartriangleright
$$
is a categorical equivalence.

\end{enumerate}
One can show that every contravariant fibration is a right fibration, and moreover the fibrant objects of $(\Set_\Delta)_{/S}$ are precisely the right fibrations over $S$, see \cite{lurie2009higher}*{Proposition 2.1.4.9}.

The main result \cite{lurie2009higher}*{2.2.1.2} is that equipping $(\Set_\Delta)^{\mathfrak C[S]}$ with the projective model structure induced by the Quillen model structure, we obtain a Quillen equivalence
$$
\adjoints{(\Set_\Delta)_{/S}}{(\Set_\Delta)^{\mathfrak C[S^\op]}}{\St}{\Un}.
$$
The functors $\St$ and $\Un$ are called the \emph{straightening} and \emph{unstraightening} functors. Let $X\xrightarrow pS$ be a right fibration, and $S^\op\xrightarrow f\mathscr S$ a presheaf. Then by the $(\mathfrak C,N_\Delta)$-adjunction, $f$ corresponds to a simplicial functor $\mathfrak C[S^\op]\xrightarrow{f^\sharp}\Set_\Delta$. We say that \emph{$p$ is classified by $f$}, if $p\simeq\Un(f^\sharp)$.

%%%%%%%%%%%%%%%%%%%%%%%%% Cartesian fibrations

\subsection{Marked simplicial sets, simplicial localization, and straightening-unstraightening}\label{ss:Cartesian}

Let $K$ be a simplicial set and $\mathscr C$ a quasi-category. Then the Hom simplicial set $\Fun(K,\mathscr C)$ with $n$-simplices given by
$$
\Fun(K,\mathscr C)_n=\Hom_{\Set_\Delta}(K\times\Delta^n,\mathscr C)
$$
is a quasi-category \cite{lurie2009higher}*{Proposition 1.2.7.3}.

The simplicial category $\Cat_\infty^\Delta$ is formed as follows.

\begin{enumerate}

\item Its objects are small quasi-categories.

\item For quasi-categories $\mathscr C,\mathscr D$, the mapping space $\Map_{\Cat_\infty^\Delta}(\mathscr C,\mathscr D)$ is the largest Kan complex in $\Fun(\mathscr C,\mathscr D)$.

\end{enumerate}

We let $\Cat_\infty$ be the coherent nerve $N_\Delta(\Cat_\infty^\Delta)$. We would like to get unstraightenings of presheaves of quasi-categories $\mathscr C^\op\to\Cat_\infty$. These will be the Cartesian fibrations. Let $X\xrightarrow pS$ be an inner fibration. Let $x\xrightarrow ey$ be an edge in $X$. We say that $e$ is a \emph{$p$-Cartesian edge}, if the canonical map
$$
X_{/e}\to X_{/y}\times_{S_{/p(y)}}S_{/p(e)}
$$
is a trivial fibration. This definition makes sense because of the following. As described in \S\ref{ss:quasicategories}, we can get postcomposition maps $\Hom^R_X(z,x)\xrightarrow{e\circ}\Hom^R_X(x,y)$ and $\Hom^R_S(p(z),p(x))\xrightarrow{p(e)\circ}\Hom^R_S(p(z),p(y))$. Then $e$ is $p$-Cartesian if and only if for all $z\in X$, the diagram
\begin{center}

\begin{tikzpicture}[xscale=4,yscale=1.5]
\node (C') at (0,1) {$\Hom^R_X(z,x)$};
\node (D') at (1,1) {$\Hom^R_X(z,y)$};
\node (C) at (0,0) {$\Hom^R_S(p(z),p(x))$};
\node (D) at (1,0) {$\Hom^R_S(p(z),p(y))$.};
\path[->,font=\scriptsize,>=angle 90]
(C') edge node [above] {$e\circ$} (D')
(C') edge node [right] {$p$} (C)
(D') edge node [right] {$p$} (D)
(C) edge node [above] {$p(e)\circ$} (D);
\end{tikzpicture}

\end{center}
is homotopy Cartesian \cite{lurie2009higher}*{Proposition 2.4.4.3}. Note that if $S=\ast$, then $e$ is a $p$-Cartesian edge if and only if it is an equivalence. The inner fibration $X\xrightarrow pS$ is a \emph{Cartesian fibration}, if for all vertices $y\in X$ and edges $\Bar x\xrightarrow{\Bar e}p(y)$ in $S$, there exists a $p$-Cartesian edge $e$ such that $p(e)=\Bar e$.

To get a straightening-unstraightening construction between Cartesian fibrations and presheaves in quasi-categories, we need to keep track of where morphisms take Cartesian edges. Therefore, it needs to be formulated using marked simplicial sets. A \emph{marked simplicial set} is a pair $(X,\mathscr E)$ where

\begin{enumerate}

\item $X$ is a simplicial set, and

\item $\mathscr E$ is a collection of edges of $X$ containing the degenerate edges.

\end{enumerate}

For a simplicial set $X$, we have the two extreme cases.

\begin{enumerate}

\item The marked edges in the marked simplicial set $X^\flat$ are only the degenerate edges.

\item In the marked simplicial set $X^\sharp$, every edge is marked.

\end{enumerate}

Let $\Set_\Delta^+$ denote the category with

\begin{enumerate}

\item objects the marked simplicial sets, and

\item morphisms the morphisms of simplicial sets which take marked edges to marked edges.

\end{enumerate}

Let $X,Y\in\Set_\Delta^+$ be marked simplicial sets. Then we denote by $\Map^\flat(X,Y)$ and $\Map^\sharp(X,Y)$ the simplicial sets with
$$
\Hom_{\Set_\Delta}(\Delta^n,\Map^\flat(X,Y))=\Hom_{\Set_\Delta^+}(X\times(\Delta^n)^\flat,Y),\text{ and }
$$
$$
\Hom_{\Set_\Delta}(\Delta^n,\Map^\sharp(X,Y))=\Hom_{\Set_\Delta^+}(X\times(\Delta^n)^\sharp,Y).
$$
Let $S$ be a simplicial set. Then a marked $S$-simplicial set is a marked simplicial set with a morphism to $S^\sharp$. We denote their category by $(\Set_\Delta^+)_{/S}$. Let $X,Y\in(\Set_\Delta^+)_{/S}$ be marked $S$-simplicial sets. Then $\Map^\flat_S(X,Y)\subseteq\Map^\flat(X,Y)$ is the simplicial subset on simplices $X\times(\Delta^n)^\flat\to Y$ such that the postcomposite with the structure map $Y\to S$ is the composite of the projection map and the structure map $X\times(\Delta^n)^\flat\to X\to S^\sharp$. We define $\Map^\sharp_S(X,Y)$ similarly.

Let $Z\xrightarrow pS$ be a Cartesian fibration. Then the marked $S$-simplicial set $Z^\natural$ has as marked edges the $p$-Cartesian edges. Let $X$ be a marked $S$-simplicial set. Then $\Map^\flat_S(X,Z^\natural)$ is a quasi-category, and $\Map^\sharp_S(X,Z^\natural)$ is its interior. Let $X\xrightarrow fY$ be a morphism of marked $S$-simplicial sets. Then the following are equivalent \cite{lurie2009higher}*{Proposition 3.1.3.3}.

\begin{enumerate}

\item For every Cartesian fibration $Z\to S$, the precomposition map
$$
\Map^\flat_S(Y,Z^\natural)\xrightarrow{\circ f}\Map^\flat_S(X,Z^\natural)
$$
is an equivalence of quasi-categories.

\item For every Cartesian fibration $Z\to S$, the precomposition map
$$
\Map^\sharp_S(Y,Z^\natural)\xrightarrow{\circ f}\Map^\sharp_S(X,Z^\natural)
$$
is a homotopy equivalence of Kan complexes.

\end{enumerate}
If these equivalent conditions are satisfied, then we say that \emph{$f$ is a Cartesian equivalence}. Now using the $\Map^\sharp_S$ as mapping spaces, we can equip $(\Set_\Delta^+)_{/S}$ with a left proper, combinatorial, simplicial model structure as follows \cite{lurie2009higher}*{Proposition 3.1.3.7, Corollary 3.1.4.4}.

\begin{enumerate}

\item The cofibrations are the maps which are monomorphisms on the underlying simplicial sets.

\item The weak equivalences are the Cartesian equivalences.

\end{enumerate}

This is called the \emph{Cartesian model structure}. Let $X\to S$ be a marked $S$-simplicial set. Then it is a fibrant object if and only if $X\simeq Y^\natural$ for some Cartesian fibration $Y\to S$ \cite{lurie2009higher}*{Proposition 3.1.4.1}. Let $X\xrightarrow fY$ be a morphism of Cartesian fibrations over $S$. Then $f$ is a Cartesian equivalence if and only if the fibres $X_s\xrightarrow{f_s}Y_S$ are categorical equivalences for all $s\in S$ \cite{lurie2009higher}*{Proposition 3.1.3.5}.

Let $\mathscr C$ be a quasi-category. Let $\mathscr E$ be a collection of edges containing the degenerate edges. Then $(\mathscr C,\mathscr E)$ is a marked simplicial set. Let $(\mathscr C,\mathscr E)\to\mathscr D^\natural$ be a fibrant replacement. Then we call $\mathscr D$ the \emph{simplicial localization of $\mathscr C$ with respect to $S$}, and write $\mathscr D=\mathscr C[\mathscr E^{-1}]$.

Note that the full simplicial subcategory of fibrant objects of $\Set_\Delta^+$ is precisely $\Cat_\infty^\Delta$. Equipping $(\Set_\Delta^+)^{\mathfrak C[S]}$ with the projective model structure induced by the Cartesian model structure, we get a Quillen equivalence \cite{lurie2009higher}*{Theorem 3.2.0.1}
$$
\adjoints{(\Set_\Delta^+)_{/S}}{(\Set_\Delta^+)^{\mathfrak C[S^\op]}}{\St^+}{\Un^+}.
$$
We refer to the functors $\St^+$ and $\Un^+$ as the \emph{straightening} and \emph{unstraightening} functors. Let $X\xrightarrow pS$ be a Cartesian fibration, and $S^\op\xrightarrow f\Cat_\infty$ a presheaf. Then by the $(\mathfrak C,N_\Delta)$-adjunction, $f$ corresponds to a simplicial functor $\mathfrak C[S^\op]\xrightarrow{f^\sharp}\Cat_\infty^\Delta\subseteq\Set_\Delta^+$. We say that \emph{$p$ is classified by $f$}, if $p\simeq\Un^+(f^\sharp)$.

%%%%%%%%%%%%%%%%%%%%%%%%%%%%%% Homotopy limits and descent

\subsection{Homotopy limits and descent}\label{ss:descent}

Let $I$ and $\mathbf C$ be 1-categories. Consider the category of diagrams $\Fun(I,\bC)$. There is a constant functor
\[
{\rm const} : \bC\rightarrow \Fun(I,\bC).
\]
The right adjoint of the constant functor is the limit functor, when it exists. If
$\bC$ is a model category, we may take its right derived functor. In order to do
this, we need to equip $\Fun(I,\bC)$ with a model structure. We would like to
do this so that the adjunction ${\rm const} \dashv \lim$ becomes a Quillen pair.
This requires that the constant functor preserves cofibrations and acyclic cofibrations.
With this in mind we can try to put a model structure on $\Fun(I,\bC)$ where the
cofibrations and weak equivalences are defined objectwise on $I$. The fibrations
are then determined. When it exists this is the injective model structure on
the functor category and we may use it define a right derived functor of the limit
which is known as the \emph{homotopy limit}, written $\holim$.

We will be interested in the case where $\bC$ is $\Set_\Delta$ with its Quillen model structure.

\begin{definition}\label{defn:strict descent}
	Let $\bC$ be a Grothendieck site with products.
	Given a cover $U\rightarrow T$ we may form the Cech nerve $U^\bullet\rightarrow T$.
	A simplicial presheaf $\sF$ is a \emph{sheaf} if for
	every cover $U\rightarrow T$ we have
	\[
	\sF(T) \cong \holim \sF(U^\bullet).
	\]

\end{definition}

\begin{remark}
	This reduces to the ordinary definition in the case of a presheaf of sets. To reconcile this
    with the notion of stack, see \cite[6.5]{hollander}.
\end{remark}

\begin{warn}

Note that this notion does not include stacks, since it only includes functors, not pseudofunctors.

\end{warn}

We want to study descent conditions for presheaves of $\infty$-categories, which in our case will be maps $N(\mathbf C)^\op\xrightarrow{\mathcal F}\Cat_\infty$. As above, they can be formulated via homotopy limits. In the language of quasi-categories, it is easy to define these. Let $X$ be a quasi-category. Then $x\in X$ is a \emph{final object}, if for all $y\in X$, the mapping space $\Map_X(y,x)$ is contractible. This happens if and only if $x\in X$ is \emph{strongly final}, that is the restriction map $X_{/x}\to X$ is a trivial fibration \cite{lurie2009higher}*{Corollary 1.2.12.5}. Now let $K$ be a simplicial set, and $K\xrightarrow kX$ a diagram. Then the \emph{limit $\lim k$} is simply a final object in the overcategory $X_{/k}$. Note that these are automatically homotopy limits. Dually, we can define initial objects and colimits.

Following the straightening-unstraightening construction recalled in \S\ref{ss:Cartesian}, a presheaf $N(\mathbf C)^\op\xrightarrow{\mathcal F}\Cat_\infty$ classifies a Cartesian fibration $\mathscr X\xrightarrow pN(\mathbf C)$. More generally, we can replace $N(\mathbf C)$ with some simplicial set $K$. In this case, there is a generalization of the description of the homotopy limit of a fibrant cosimplicial space \cite{bousfield1972homotopy}*{X,\S3} in terms of the quasi-category of Cartesian sections which we will define now.

\begin{notn}

Let $\mathscr X\xrightarrow pK$ be an inner fibration.  The quasi-category of sections, denoted $\Gamma(K,\mathscr X)$,
is  the pullback
\begin{center}

\begin{tikzpicture}[xscale=3,yscale=1.5]
\node (C') at (0,1) {$\Gamma(K,\mathscr X)$};
\node (D') at (1,1) {$\Fun(K,\mathscr X)$};
\node (C) at (0,0) {$\{\id_K\}$};
\node (D) at (1,0) {$\Fun(K,K)$.};
\path[->,font=\scriptsize,>=angle 90]
(C') edge (D')
(C') edge (C)
(D') edge node [right] {$p\circ$} (D)
(C) edge (D);
\end{tikzpicture}

\end{center}
We will say that a section $\sigma$, that is a $0$-simplex $\sigma\in \Gamma(K,\mathscr X)$,
is \emph{Cartesian} if $\sigma(e)$ is Cartesian for each edge $e\in K_1$.

Let us denote by $\Gamma_{\Cart}(K,\mathscr X)\subset\Gamma(K,\mathscr X)$ the full subcategory on Cartesian sections. Then the limit result is the following. Let $\mathscr X\xrightarrow pK$ be a Cartesian fibration classified by a map $K^\op\xrightarrow k\Cat_\infty$. Then we have \cite{lurie2009higher}*{Corollary 3.3.3.2}
$$
\lim k=\Gamma_{\Cart}(K,\mathscr X).
$$

Let $L\xrightarrow kK$ be a map of simplicial sets. Then we write the pullback $L\times_K\mathscr X$ as $p|k$, and we write $\Gamma(k,\mathscr X)=\Gamma(L,p|k)$ and $\Gamma_{\Cart}(k,\mathscr X)=\Gamma_{\Cart}(L,p|k)$.

Suppose that $S\in L$ is a final object. We will usually notation such as $L\xrightarrow{X_\bullet}\mathscr X$ to denote sections so that we can let $X_S=X$.

\end{notn}

Recall \cite[4.1.1.1]{lurie2009higher} that a morphism $L\xrightarrow kK$ of
simplicial sets is \emph{cofinal} if for any right fibration $X\rightarrow K$ we have
a homotopy equivalence
\[
\Gamma(K,X) \xrightarrow{\circ k} \Gamma(L,X|k).
\]

An example of a cofinal morphism is the inclusion $\{S\}\hookrightarrow \mathscr C$
of a final object of a quasi-category. This follows from the quasi-categorical version of
Quillen's theorem A, see loc. cit 4.1.3.1.

\begin{lemma}\label{l:cart}\label{lem:Cartesian section}
Let $X\rightarrow K$ be a Cartesian fibration over a quasi-category. Let $S\in K$ be a final
object. Then the natural restriction
$$\Gamma_{\Cart}(K,X)\to X_S$$
is a homotopy equivalence.
\end{lemma}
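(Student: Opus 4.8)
The plan is to exhibit the restriction map $\Gamma_{\Cart}(K,X)\to X_S$ as a composite of two maps, each known to be an equivalence: the inclusion $\{S\}\hookrightarrow K$ of a final object is cofinal (by the quasi-categorical Theorem A, as recalled just above the statement), and on the other hand Cartesian sections are what the straightening--unstraightening dictionary identifies with the limit of the classifying diagram. So first I would record that, since $p:X\to K$ is a Cartesian fibration, it is in particular an inner fibration, and the restriction along the inclusion $\iota:\{S\}\hookrightarrow K$ gives a map $\Gamma(K,X)\to\Gamma(\{S\},X|\iota)=X_S$. The target $X_S$ is the fibre of $p$ over $S$; since $\{S\}$ is a point, a section over $\{S\}$ is just a vertex of $X_S$, and in fact $\Gamma(\{S\},X|\iota)\cong X_S$ as simplicial sets. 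The content of the lemma is that restricting this map to the full subcategory of \emph{Cartesian} sections turns it into a homotopy (i.e.\ categorical) equivalence.

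Next I would invoke the straightening--unstraightening machinery of \S\ref{ss:Cartesian}. Write $k:K^\op\to\Cat_\infty$ for the functor classifying $p$. By \cite{lurie2009higher}*{Corollary 3.3.3.2}, recalled in \S\ref{ss:descent}, we have $\lim k\simeq\Gamma_{\Cart}(K,X)$, the limit being taken in $\Cat_\infty$ and computed as a final object of the relevant overcategory. Now I would use that $S$ is a final object of $K$, hence a final object of $K^\op$ viewed via the opposite — more precisely, $\{S\}\hookrightarrow K$ being cofinal means, dually, that restriction along it computes limits: for a diagram $k:K^\op\to\Cat_\infty$, the canonical map $\lim k\to k(S)$ is an equivalence, because a cofinal inclusion induces an equivalence on limits of any diagram (this is the defining property of cofinal maps applied after unstraightening, combined with \cite{lurie2009higher}*{4.1.1.8} or argued directly from the cofinality statement quoted in the excerpt). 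Since $k(S)$ is, by construction of the classifying functor, the fibre $X_S$, we get $\Gamma_{\Cart}(K,X)\simeq\lim k\simeq X_S$. Finally I would check that the equivalence produced this way agrees, up to the relevant identifications, with the restriction map in the statement: both are induced by the inclusion $\{S\}\hookrightarrow K$, one before and one after straightening, and straightening is compatible with restriction along maps of the base, so the triangle commutes up to equivalence.

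An alternative, more hands-on route avoids the classifying functor entirely: one uses that $\{S\}\hookrightarrow K$ is a cofinal monomorphism, so it is in particular a \emph{right anodyne} map (a cofinal map between simplicial sets that is a monomorphism, after possibly replacing by a cofibration; here the inclusion of a final object is right anodyne by \cite{lurie2009higher}*{4.1.3.1}). Then one appeals to the fact that restriction along a right anodyne map, for a Cartesian fibration, gives a trivial fibration on the level of \emph{Cartesian} sections with the marked model structure — concretely, $\Gamma_{\Cart}(K,X)=\Map^\flat_K(K^\sharp,X^\natural)$ in the notation of \S\ref{ss:Cartesian}, and the inclusion $(\{S\})^\sharp\hookrightarrow K^\sharp$ is a Cartesian equivalence of marked simplicial sets over $K$ precisely because it is right anodyne (\cite{lurie2009higher}*{Proposition 3.1.1.6} or 3.1.4.1 together with 3.1.1.5), so precomposition induces an equivalence $\Map^\flat_K(K^\sharp,X^\natural)\to\Map^\flat_K((\{S\})^\sharp,X^\natural)=X_S$.

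The main obstacle is the last compatibility check in either approach: making sure the abstract equivalence coming from straightening (or from the marked model structure) is literally the restriction functor named in the statement, rather than merely some equivalence between the same two objects. In the marked-simplicial-set approach this is cleaner, since precomposition with the inclusion $(\{S\})^\sharp\hookrightarrow K^\sharp$ \emph{is} the restriction map, so I would favour that route; the one genuine input to verify is that $\{S\}\hookrightarrow K$ with $S$ final is right anodyne, which is exactly the cofinality statement quoted in the excerpt (\cite{lurie2009higher}*{4.1.3.1}) together with the observation that it is a monomorphism. Everything else is bookkeeping with the Cartesian model structure already set up in \S\ref{ss:Cartesian}.
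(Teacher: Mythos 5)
Your argument is correct, but it is not the route the paper takes. The paper's proof passes to the simplicial subset $X_{\Cart}\subseteq X$ spanned by the simplices all of whose edges are Cartesian, observes that $X_{\Cart}\to K$ is a right fibration by \cite{lurie2009higher}*{2.4.2.5}, and then appeals to the cofinality of $\{S\}\hookrightarrow K$ for sections of right fibrations, i.e.\ the statement recalled immediately before the lemma. Your second route --- identifying $\Gamma_{\Cart}(K,X)$ with $\Map^\flat_K(K^\sharp,X^\natural)$ and using that $\{S\}^\sharp\hookrightarrow K^\sharp$ is a trivial cofibration in the Cartesian model structure because $\{S\}\hookrightarrow K$ is right anodyne --- is the standard marked-simplicial-set argument and is in some ways sharper: it exhibits the restriction map itself as a trivial fibration of quasi-categories, which is exactly the form in which the lemma is invoked later (Definition \ref{d:descent}). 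By contrast, the paper's argument, read literally, compares $\Gamma(K,X_{\Cart})$ with $(X_{\Cart})_S=X_S^{\simeq}$; since sections of a right fibration form a Kan complex (Lemma \ref{l:isKan}), this is a priori a statement about maximal Kan subcomplexes rather than about the full quasi-categories $\Gamma_{\Cart}(K,X)$ and $X_S$, so your version makes explicit a step the paper leaves to ``follows from definitions.'' Your first route, via $\lim k\simeq\Gamma_{\Cart}(K,X)$ and evaluation at the initial object of $K^{\op}$, is also viable, and you correctly identify its one delicate point --- matching the abstract equivalence with the named restriction functor --- which is precisely what the marked-simplicial-set argument sidesteps, since there the comparison map \emph{is} precomposition with $\{S\}^\sharp\hookrightarrow K^\sharp$.
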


\begin{proof}
Let $X_{\Cart} \subseteq X$ be the subcategory generated by Cartesian edges.
In other words $X_{\Cart}$ and $X$ have the same $0$-simplicies. For $n>0$, the
$n$-simplices of $X_{\Cart}$ are those $n$-simplicies of $X$ whose edges are
Cartesian.
Then
$$
X_{\Cart}\rightarrow K $$
is a right fibration by \cite[2.4.2.5]{lurie2009higher}.
The result follows from definitions now.
\end{proof}

\begin{defn}\label{d:descent}

Let $\mathscr X\xrightarrow pK$ be a Cartesian fibration, and $\Delta_+^\op\xrightarrow{k_+}K$ an augmented simplicial object. Let us denote the restriction $k_+|\Delta^\op$ by $k$. The restriction map $\Gamma_{\Cart}(k_+,\mathscr X)\to\Gamma(k_+(-1),\mathscr X)$ is a trivial fibration by Lemma \ref{l:cart}. Therefore the zigzag
$$
\Gamma(k_+(-1),\mathscr X)\xleftarrow\simeq\Gamma_{\Cart}(k_+,\mathscr X)\to\Gamma_{\Cart}(k,\mathscr X)
$$
gives a map $\Gamma(k_+(-1),\mathscr X)\xrightarrow{La^*}\Gamma_{\Cart}(k,\mathscr X)$. We say that \emph{$p$ satisfies descent along $k_+$}, if the functor $La^*$ is an equivalence of quasi-categories. Let $U\xrightarrow gT$ be an edge in $K$. Then we say that \emph{$p$ satisfies descent along $g$}, if $p$ satisfies descent along the augmented \v Cech nerve $\check C(g)_+$.

We will reconcile this definition with Definition \ref{defn:strict descent} in Remark \ref{r:descent} below. Note also, that as per our conventions,
we have omitted the nerve in our notation in various places, for example
$N(\Delta^\op)$.
\end{defn}

\begin{rem}\label{r:descent}
Let $\Delta^\op_+\xrightarrow{k_+} K$ be an augmented simplicial object. Let $\Delta\xrightarrow{\st(p|k)}\Cat_\infty$ be the straightening of $p|k$. Then the straightening
 $\Delta_+\xrightarrow{\st(p|k_+)}$ of $p|k_+$ is a cone over $\st(p|k)$, that is a point of $(\Cat_\infty)_{/\st(p|k)}$. We obtain a functor of quasi-categories

 $$\Gamma(k_+(-1),\mathscr X)\xrightarrow{(La^*)'}\holim \st(p|k).$$
Descent is usually phrased by asserting that $(La^*)'$ is a weak equivalence. It is equivalent to Definition \ref{d:descent} as $\holim \st(p|k)\simeq\Gamma_{\Cart}(k,p)$ \cite{lurie2009higher}*{Corollary 3.3.3.2}.
\end{rem}

%%%%%%%%%%%%%%%%%%%%%%%%%%%%% Background on quasi-categories on complexes

\section{Background on quasi-categories of complexes}

%%%%%%%%%%%%%% The dg -nerve %%%%%%%%%%%%%%%%%%%

\subsection{The dg-nerve and the bounded below derived category}
\label{ss:dgNerve}

This construction is from \cite[Ch. 1]{lurie2014higher}. We will work exclusively
with complexes whose differential has degree $+1$, so it is worth recalling the
definition in our context here. One can pass from a cohomological complex $C^\bullet$
to a homological complex $C_\bullet$ by setting $C_n = C^{-n}$.

Given a $\dg$-category $\bC$ we will denote the (cohomological) mapping complex between
a pair of objects by $\Hom^\bullet(x,y)$.
We may apply our reindexing construction to $\bC$  to
obtain a dg-category with homological mapping complexes. Lurie's dg-nerve construction
may then be applied to this category to obtain a quasi-category.
Lets unwind definitions to see what we obtain. The $n$-simplicies are pairs
$((X_i)_{0\le i \le n}, \{ f_I\}_{I\in \str([n])})$ where
\begin{itemize}
\item $X_i$ are objects of $\bC$
\item $\str([n])$ is the collection of subsets of $[n]$ length at least $2$
\item $f_I\in \Hom^{2-|I|}(X_{\min(I)}, X_{\max(I)})$.
\end{itemize}
 This data is subject to the condition that  for each $I\in\str[n]$
 of the form $I=\{ i_{\min}<i_1<\ldots i_m<i_{\max}  \}$
 we have
 \[
 df_I =\sum_{1\le j \le m}(-1)^j (f_{I-\{i_j\}}  - f_{\{ i_j<\ldots <i_m<i_{\max} \}} \circ f_{\{ i_{\min}<i_1\ldots <i_j\} }).
 \]
These collections acquire the structure of a simplicial set by defining
$$ \alpha^*(((X_i)_{0\le i \le n}, \{ f_I\}_{I\in \str([n])}))
= (X_{\alpha(j)})_{0\le j \le m}, (g_I)_{I\in \str[m]})
$$ for
an order preserving function $\alpha : [m]\rightarrow [n]$ where
\[
g_I =
\begin{cases}
f_{\alpha(I)} & \mbox{if }\alpha|_I\mbox{ is injective} \\
1_{X_i} & |I|=2\mbox{ and } \alpha(I)=\{i\} \\
0 & \mbox{otherwise}.
\end{cases}
\]

Consider now a Grothendieck abelian category $\bA$. We may consider the $\dg$-category
 $\Ch^+(\bA)$  whose objects are bounded below chain complexes.

 For a pair of chain complexes $A$ and $B$ let's write $\Hom_\dg(A,B)$ for the chain
complex of maps between them.
In degree $p$ it is
\[
\Hom_\dg(A,B)^p = \prod_n \Hom(A^n, B^{n+p}).
\]
Given $f\in \Hom(A^n, B^{n+p})$,
the differential is given by the formula $d(f) =  d_B \circ f - (-1)^p f\circ d_A$.
The choice of sign insures that the $0$-cycles are chain maps.

Applying the
dg-nerve we obtain a quasi-category $\ndg(Ch^+(\bA))$.
We will be interested in the full subcategory on complexes of injectives,
$\mathscr D^+(\bA)=\ndg(\Ch^+(\bA-inj))$
Let's recall what this
simplicial set looks like.

The $n$-simplicies consist of pairs $(\{K_i\}, \{f_I\})$ where
\begin{enumerate}
	\item for $0\le i \le n$ we have a bounded below chain complex $I_i$ of injectives
	\item for each subset $I\subseteq \{0,\ldots ,n\}$ of the form
	$I=\{ i_- < i_1<\cdots < i_m<i_+\}$ where $m>0$ we have $f_I\in \Hom_\dg(I_{i_-},I_{i_+})^{2-|I|}$
satisfying the equation
\[
df_I = \sum_{1\le j\le m} (-1)^j (f_{I-\{i_j\}}-
f_{ i_j< \cdots < i_m < i_+} \circ f_{i_-<i_1<\cdots < i_j}).
\]
\end{enumerate}

Now the homotopy category of this simplicial set, \cite[page 29]{lurie2009higher},
is exactly the homotopy category of bounded below complexes of injectives. This
is the ordinary derived category. For this reason we call $\msD^+(\bA)$
the bounded below derived quasi-category.

There is a useful description of mapping spaces in a dg-nerve using Dold--Kan complexes, which we recall now. Let $A$ be a simplicial abelian group. Then its \emph{Moore complex} is the strictly connective chain complex with $(CA)_n=A_n$ and $d^{CA}_n=\sum_{k=0}^n(-1)^kd^A_k$ for $n\ge0$. The \emph{normalized chain complex} is the subcomplex with $(NA)_n=\cap_{k=1}^n\ker d^A_kA_n\subseteq CA_n$ for $n\ge0$. It turns out that the functor $\Ab^{\Delta^\op}\xrightarrow{A\mapsto NA}\Ch^{\ge0}(\mathbf Z)$ is an equivalence of categories \cite{goerss1999simplicial}*{Corollary III.2.3}. It has a canonical quasi-inverse, the \emph{Dold--Kan complex} functor. One of its descriptions\footnote{By abuse of notation, for a simplicial set $K$, we will also denote the free simplicial abelian group $\mathbf Z[K]$ by $K$.} is to make $(\DK A)_n=\Hom_{\Ch(\mathbf Z)}(N\Delta^n,A)$, and let the structure come from the cosimplicial structure of $N\Delta^\bullet$. It turns out that there exists an isomorphism of simplicial abelian groups $(DK A)_n\xrightarrow{\Psi_n}\bigoplus_{[n]\twoheadrightarrow[k]}A_k$ \cite{goerss1999simplicial}*{Proposition III.2.2}, but its inverse needs to be constructed via a recursive process, not explicitly. We propose the following alternative description, based on the fact that, letting $(DA)_n\subset(CA)_n$ denote the subcomplex on degenerate simplices, the inclusion map induces an isomorphism $NA\to CA/DA$ \cite{goerss1999simplicial}*{Theorem III.2.1}.

\begin{defn}

For $n\ge0$, the \emph{Koszul complex} $K\Delta^n$ is the subcomplex $K\Delta^n\subseteq C\Delta^n$ on nondegenerate simplices.

\end{defn}

\begin{rem}

We call this the Koszul complex, since $K\Delta^n$ is the na\"ive truncation and shift to the positive part of the Koszul complex $K(\mathbf Z^n)$.

\end{rem}

\begin{prop} (1) The inclusion map induces an isomorphism $K\Delta^n\to C\Delta^n/D\Delta^n$.

(2) Let $A$ be a strictly connective chain complex. Then for any $n\ge0$, restriction to simplices of the form $\{0\}\cup\sigma$ for $\sigma\in\{1,\dotsc,n\}$ gives an isomorphism $\Hom_{\Ch(\mathbf Z)}(K\Delta^n,A)\to\bigoplus_{\sigma\in\{1,\dotsc,n\}}A_{|\sigma|}$.

\end{prop}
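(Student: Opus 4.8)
The two parts are closely linked, so the plan is to establish (1) first and then derive (2) as a formal consequence. For part (1), recall that $C\Delta^n$ has as its degree-$k$ part the free abelian group on the $k$-simplices of $\Delta^n$, i.e. on order-preserving maps $[k]\to[n]$; the nondegenerate ones are the strictly order-preserving maps, equivalently the subsets of $[n]$ of cardinality $k+1$. The submodule $D\Delta^n$ is spanned by the degenerate simplices, so there is an evident internal direct-sum decomposition $C\Delta^n = K\Delta^n \oplus D\Delta^n$ \emph{as graded abelian groups} (not as complexes). Thus the composite $K\Delta^n \hookrightarrow C\Delta^n \twoheadrightarrow C\Delta^n/D\Delta^n$ is an isomorphism of graded abelian groups; since it is a map of complexes (being a composite of chain maps), it is an isomorphism of complexes. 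The only thing to check is that this graded splitting is genuine, i.e. that every simplex is uniquely a sum of a nondegenerate one and a degenerate one — but in fact every generating simplex is \emph{either} nondegenerate \emph{or} degenerate, so the splitting on generators is trivial and $D\Delta^n$ is spanned by a subset of the basis disjoint from the generators of $K\Delta^n$. I would state this cleanly and move on; there is essentially no obstacle here, beyond being careful that $D\Delta^n$ really is a \emph{subcomplex} (which it is, by the simplicial identities).

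For part (2), the chain of identifications is
\[
\Hom_{\Ch(\mathbf Z)}(K\Delta^n, A) \;\cong\; \Hom_{\Ch(\mathbf Z)}(C\Delta^n/D\Delta^n, A) \;\cong\; \Hom_{\Ch(\mathbf Z)}(N\Delta^n, A) \;=\; (\DK A)_n,
\]
using part (1) together with the isomorphism $N(-)\cong C(-)/D(-)$ cited from \cite{goerss1999simplicial}*{Theorem III.2.1}. So it suffices to produce the asserted isomorphism with $\bigoplus_{\sigma\subseteq\{1,\dots,n\}}A_{|\sigma|}$ directly on $\Hom_{\Ch(\mathbf Z)}(K\Delta^n, A)$. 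The plan is: a chain map $\varphi\colon K\Delta^n\to A$ is freely determined by its values on the basis of $K\Delta^n$, namely on the subsets $T\subseteq[n]$ with $|T|\geq 1$ (placed in degree $|T|-1$), subject only to the compatibility $d_A\varphi = \varphi d^{K\Delta^n}$. Restriction to the sub-family of subsets containing $0$, written $\{0\}\cup\sigma$ with $\sigma\subseteq\{1,\dots,n\}$, gives a map to $\bigoplus_{\sigma}A_{|\sigma|}$ (note $|\{0\}\cup\sigma| - 1 = |\sigma|$, matching degrees). The claim is that this restriction is bijective. Surjectivity and injectivity both follow from a single observation: the boundary of a subset $T$ not containing $0$ has a "leading term" $\pm(T\setminus\{\min T\})$ that, after further differentials, eventually lands among subsets containing $0$; more precisely, one solves for $\varphi(T)$ with $0\notin T$ recursively in terms of the values on subsets containing $0$ of strictly smaller... — here one must be slightly careful about the recursion variable. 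The clean way is: order subsets of $[n]$ by cardinality, and within each cardinality separate those containing $0$ from those not; the differential $d^{K\Delta^n}(T)$ for $0\notin T$ contains the term $(-1)^{0}(T\setminus\{\min T\})$ only when we remove $\min T = $ the smallest element, but that still need not contain $0$. The correct recursion is on the quantity $\min(T)$: given the values of $\varphi$ on all subsets $T'$ with either $0\in T'$, or $\min(T') > \min(T)$, the cocycle equation $d_A\varphi(T) = \varphi(d^{K\Delta^n} T)$ lets one either read off or over-determine $\varphi$ on $T$ — and a short check shows it determines it uniquely and consistently. This bookkeeping is the main obstacle: it is exactly the content that makes the Dold–Kan splitting "recursive, not explicit" in \cite{goerss1999simplicial}, and the point of the Koszul-complex reformulation is to package it as a single clean statement, but one still has to verify the recursion terminates and is consistent, i.e. that the over-determined equations for $\varphi(T)$, $0\notin T$, are automatically satisfied. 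I expect to handle this by induction on $n - \min(T)$ (with the base case $\min(T)=n$, i.e. $T=\{n\}$, where the equation reads $d_A\varphi(\{n\}) = 0$ in degree $-1$, vacuous), showing at each stage that the contribution of the already-known terms is consistent precisely because $\varphi$ was a chain map on the sub-diagram — in effect reproving that $K\Delta^n\simeq N\Delta^n$ is a deformation retract of $C\Delta^n$ with the contracting homotopy given by the explicit Koszul formulas.

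Finally I would remark that, tracing through, the inverse map sends a tuple $(a_\sigma)_{\sigma\subseteq\{1,\dots,n\}}$ with $a_\sigma \in A_{|\sigma|}$ to the chain map whose value on $\{0\}\cup\sigma$ is $a_\sigma$ and whose value on a subset $T\not\ni 0$ is the explicit alternating sum dictated by the recursion — this gives the promised \emph{explicit} description of $\DK$ that the surrounding text advertises, and it is worth spelling out at least the first couple of terms (e.g. $\varphi(\{i\}) = a_{\{i\}}$ for... no: $\varphi(\{i\})$ in degree $0$ is determined by $a_{\{i\}}$ via the equation $d_A\varphi(\{i\}) = 0$? — again a degree check is needed) so that the reader can see the pattern without rederiving it.
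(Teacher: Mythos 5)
Your part (1) is correct (and the paper itself states this proposition without proof, so there is no "official" argument to compare against). For part (2), however, the mechanism you propose does not work, and the gap is in the one place you yourself flag as "the main obstacle."

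You want to determine $\varphi(T)$ for $0\notin T$ by a recursion on $\min(T)$ using the cocycle equation $d_A\varphi(T)=\varphi(d^{K\Delta^n}T)$ \emph{for $T$ itself}. That equation only constrains $d_A\varphi(T)$, and since $d_A$ is not injective it cannot "read off" $\varphi(T)$. Your own base case exposes this: for $T=\{n\}$ the equation is vacuous, so under your scheme $\varphi(\{n\})$ would be a free parameter --- contradicting the injectivity you are trying to prove (it is in fact pinned down by $d_A\varphi(\{0,n\})=\varphi(\{n\})-\varphi(\{0\})$). The preliminary observation is also false as stated: the faces of a subset $T$ with $0\notin T$ never contain $0$, so iterated boundaries of $T$ do not "eventually land among subsets containing $0$," and no recursion that only descends along $d^{K\Delta^n}T$ can reach the prescribed data.

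The correct move is to use the cocycle equation attached to the \emph{cone} $\{0\}\cup T$ rather than to $T$. Writing $T=\{t_1<\dotsb<t_k\}$ with $t_1>0$ and $a_\sigma=\varphi(\{0\}\cup\sigma)$, the chain-map identity for $\{0\}\cup T$ reads
\begin{equation*}
d_A a_T=\varphi(T)+\sum_{i=1}^{k}(-1)^i a_{T\setminus\{t_i\}},
\qquad\text{i.e.}\qquad
\varphi(T)=d_A a_T+\sum_{i=1}^{k}(-1)^{i+1}a_{T\setminus\{t_i\}}.
\end{equation*}
This determines $\varphi$ on all simplices not containing $0$ in a single explicit step --- no recursion, no consistency bookkeeping for the determination itself. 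Injectivity of the restriction map is then immediate. For surjectivity you define $\varphi$ by this formula and must check that the chain-map identities for the remaining simplices, namely those $T'$ with $0\notin T'$ and $|T'|\ge2$, hold automatically; this is a short sign computation: $d_A\varphi(T')=\sum_i(-1)^{i+1}d_Aa_{T'\setminus\{t_i\}}$ by $d_A^2=0$, while expanding $\varphi(d^{K\Delta^n}T')$ produces the same $d_Aa$-terms plus cross terms $a_{T'\setminus\{t_i,t_j\}}$ that cancel in pairs. (Strict connectivity of $A$ enters only in that the degree-$0$ simplices impose no condition.) With this replacement your outline closes up, and the explicit inverse you wanted to record at the end is exactly the displayed formula.
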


\begin{lemma}
\begin{enumerate}
\item Let $\bC$ be a $\dg$-category and $X$ and $Y$ objects of $\bC$. Then $\pi_n(\Map_{N_\dg(\bC)}(X,Y)) = H^{-n}(\Hom^\bullet(X,Y)).$
\item
We denote by $\msD^{\ge 0}(\bA)$ the full subcategory on complexes that are acyclic
in negative degrees.
Let $K\in \msD^{\ge 0}(\bA)$ and $J\in \msD^+(\bA)$. The the canonical morphism
to the good truncation
\[
J\rightarrow \tau_{\ge 0} J
\]
induces a weak equivalence
\[
\Map(\tau_{\ge  0} J, K) \rightarrow \Map(J,K)
\]
\end{enumerate}
\end{lemma}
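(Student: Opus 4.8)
For part (1), the plan is to reduce to a direct computation with the dg-nerve. First I would use the Dold--Kan description of mapping spaces recalled just above: for objects $X,Y$ of $\bC$, the Kan complex $\Map_{N_\dg(\bC)}(X,Y)$ is weakly equivalent to $\DK$ applied to the truncation $\tau_{\ge0}$ of the homological mapping complex, i.e. to $\DK(\tau_{\ge0}\Hom_\bullet(X,Y))$. Then by the Dold--Kan correspondence $\pi_n$ of $\DK(M)$ is $H_n(M)$ for a connective complex $M$, which in our reindexing is $H_n(\tau_{\ge0}\Hom_\bullet(X,Y)) = H_n(\Hom_\bullet(X,Y))$ for $n\ge 0$; translating back to the cohomological indexing $C_n = C^{-n}$ this is exactly $H^{-n}(\Hom^\bullet(X,Y))$. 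The only thing to be careful about is matching signs and the truncation: the homotopy groups of the mapping space only see nonnegative homological degrees, which is why the good truncation appears and why negative self-exts show up as negative homotopy groups of mapping spaces (cf.\ the discussion around \ref{prop:Map in D}). This part is essentially bookkeeping once the $\DK$-description of $\Map_{N_\dg}$ is in hand, so I would cite \cite{lurie2014higher} for the latter and spell out the reindexing.

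For part (2), the plan is to use part (1) together with the hypotheses $K\in\msD^{\ge0}(\bA)$ and $J\in\msD^+(\bA)$. A map of Kan complexes is a weak equivalence iff it induces isomorphisms on all homotopy groups (at all basepoints), so it suffices to check that
\[
H^{-n}\bigl(\Hom^\bullet(\tau_{\ge0}J,K)\bigr)\longrightarrow H^{-n}\bigl(\Hom^\bullet(J,K)\bigr)
\]
is an isomorphism for all $n\ge0$, and that the map is a bijection on connected components / respects basepoints — but since everything is computed by the same formula, it is enough to treat it uniformly as: the chain map $\Hom^\bullet(\tau_{\ge0}J,K)\to\Hom^\bullet(J,K)$ induced by $J\to\tau_{\ge0}J$ is a quasi-isomorphism in nonpositive cohomological degrees (equivalently nonnegative homological degrees). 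Write $J = (\tau_{<0}J \to J \to \tau_{\ge0}J)$; the fibre of $J\to\tau_{\ge0}J$ is $\tau_{<0}J$, which is acyclic in degrees $\ge 0$. Since $K$ is acyclic in negative degrees, $K$ is bounded below and I can take it degreewise injective (we are inside $\msD^+(\bA)=\ndg(\Ch^+(\bA\text{-}\mathrm{inj}))$ anyway), so $\Hom^\bullet(-,K)$ is exact and the triangle $\Hom^\bullet(\tau_{<0}J,K)\to\Hom^\bullet(\tau_{\ge0}J,K)\to\Hom^\bullet(J,K)$ reduces the claim to: $\Hom^\bullet(\tau_{<0}J,K)$ is acyclic in cohomological degrees $\le 0$. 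Now $\tau_{<0}J$ is concentrated in strictly negative cohomological degrees and $K$ in nonnegative cohomological degrees, so $\Hom^\bullet(\tau_{<0}J,K)^p = \prod_n\Hom(\tau_{<0}J^{\,n}, K^{n+p})$ vanishes unless $n<0$ and $n+p\ge 0$, i.e.\ unless $p\ge -n > 0$; hence $\Hom^\bullet(\tau_{<0}J,K)$ lives entirely in degrees $p>0$ and is in particular acyclic in degrees $\le 0$. Feeding this back through part (1) gives the claimed weak equivalence $\Map(\tau_{\ge0}J,K)\to\Map(J,K)$.

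The main obstacle I anticipate is not the vanishing argument — which is a clean degree count — but the bookkeeping around replacing an arbitrary $J\in\msD^+(\bA)$ (a complex of injectives) by a term-wise-injective model for which $\tau_{<0}J$, $\tau_{\ge0}J$ are again in $\msD^+(\bA)$ and for which $\Hom^\bullet(-,K)$ genuinely computes the derived $\RHom$, so that the long exact cohomology sequence and the identification in part (1) are applicable; since objects of $\msD^+(\bA)$ are already complexes of injectives and good truncations of such are still complexes of injectives (using that $\bA$ is Grothendieck abelian, kernels/cokernels of maps between injectives are not injective in general — so one should instead argue that the inclusion $\Ch^+(\bA\text{-}\mathrm{inj})\hookrightarrow\Ch^+(\bA)$ realizes $\RHom$ and the truncation triangle can be taken in $\msD^+$ up to quasi-isomorphism). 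Concretely I would phrase the whole argument at the level of the homological mapping complexes computing $\pi_*\Map$, invoke part (1), and check the degree estimate there, which sidesteps having to choose strict injective models for the truncations.
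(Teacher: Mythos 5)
Your proposal is correct and follows essentially the same route as the paper: part (1) is the Dold--Kan identification from the proof of \cite{lurie2014higher}*{Proposition 1.3.1.17} plus reindexing, and part (2) reduces, as in the paper, to the case where $K$ is concentrated in non-negative degrees. The paper then simply asserts that the conclusion ``follows easily from the previous part''; your degree count on $\Hom^\bullet(\tau_{<0}J,K)$ (and your remark about the mild care needed with injective models for the truncations) supplies exactly the detail being elided there.
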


\begin{proof}
The proof of \cite[Proposition 1.3.1.17]{lurie2014higher} shows that
\[
\pi_n(\Map_{N_\dg(\bC)}(X,Y)) = \pi_n(\DK\tau_{\ge 0} \Hom^{-\bullet}(X,Y)).
\]
Here $\DK$ is the Dold--Kan functor. The result follows from the fact that the Dold--Kan
functor identifies cohomology with homotopy groups.

For the second part, as the mapping space is invariant under equivalence, we may assume
that $K$ is concentrated in non-negative degrees. The result follows easily from the previous
part.
\end{proof}

The most important feature of the quasi-category $\mathscr D^+(\bA)$  is that it is in fact a stable quasi-category,
\cite[1.3]{lurie2014higher}.

%%%%%%%%%%%%% Kaplansky

\subsection{Cotorsion pairs and unbounded complexes}\label{ss:cotorsion pairs}

We want to get a presentable fibration ${}^\op\mathscr D_S\to\Sch_S$, because they have a nice theory which we want to use. That is, we want the fibres $\mathscr D(T)^\op$ to be presentable quasi-categories. Presentable quasi-categories have all small limits and colimits. Therefore, we will need to consider unbounded complexes. For a morphism of $S$-schemes $U\xrightarrow gT$, we will also want to get an adjoint pair $\smalladjoints{\mathscr D(T)}{\mathscr D(U)}{Lg^*}{Rg_*}$. To define these functors, we will need to restrict to complexes on which the functors $g^*$ resp.~$g_*$ are \emph{homotopical}, that is they take equivalences (which in this setting are exactly the quasi-isomorphisms) to equivalences. That is, we will need to get functorial dg-flat resp.~dg-injective resolutions. To get these, we will employ two model structures given in \cite{gillespie2007kaplansky}.

Let $\mathscr G$ be an abelian category. Let $\mathscr A,\mathscr B\subseteq\mathscr G$. Let
$$
\mathscr A^\perp=\{X\in\mathscr G:\Ext^1(A,X)=0\text{ for all }A\in\mathscr A\},\text{ and }
{}^\perp\mathscr B=\{X\in\mathscr G:\Ext^1(X,B)=0\text{ for all }B\in\mathscr B\}.
$$
Then $(\mathscr A,\mathscr B)$ is a \emph{cotorsion pair}, if $\mathscr A^\perp=\mathscr B$, and $\mathscr A={}^\perp\mathscr B$.

Let $T$ be an $S$-scheme and let $\mathscr G=\mathscr O_T\text{-Mod}$. Then we have two important cotorsion pairs:
\begin{itemize}

\item $(\mathscr O_T\text{-Mod},\mathscr I)$, where $\mathscr I$ is the class of injective $\mathscr O_T$-modules, and

\item $(\mathscr F,\mathscr C)$, where $\mathscr F$ is the class of flat $\mathscr O_T$-modules, and $\mathscr C$ is the class of cotorsion $\mathscr O_T$-modules.

\end{itemize}

Let $X$ be a cochain complex in $\mathscr G$. Then

\begin{enumerate}

\item $X$ is an \emph{$\mathscr A$-complex}, if it is exact, and $Z^nX\in\mathscr A$ for all $n$. The collection of $\mathscr A$-complexes is denoted by $\Tilde{\mathscr A}$.

\item $X$ is a \emph{$\mathscr B$-complex}, if it is exact, and $Z^nX\in\mathscr B$ for all $n$. The collection of $\mathscr B$-complexes is denoted by $\Tilde{\mathscr B}$.

\item $X$ is a \emph{dg-$\mathscr A$-complex}, if $X^n\in\mathscr A$ for all $n$, and for every map $X\xrightarrow fB$, if $B$ is a $\mathscr B$-complex, then $f$ is nullhomotopic. The collection of dg-$\mathscr A$-complexes is denoted by $\dg\Tilde{\mathscr A}$.

\item $X$ is a \emph{dg-$\mathscr B$-complex}, if $X^n\in\mathscr B$ for all $n$, and for every map $A\xrightarrow fX$, if $A$ is a $\mathscr A$-complex, then $f$ is nullhomotopic. The collection of dg-$\mathscr B$-complexes is denoted by $\dg\Tilde{\mathscr B}$.

\end{enumerate}

A complex $I$ of $\mathscr O_T$-modules is called \emph{dg-injective}, if it is a dg-$\mathscr I$-complex. A complex $P$ of $\mathscr O_T$-modules is called \emph{dg-flat}, if it is a dg-$\mathscr F$-complex.

If certain conditions are satisfied \cite{gillespie2007kaplansky}*{Theorem 4.12}, then we get a model structure on $\Ch\mathscr G$ such that

\begin{itemize}

\item the weak equivalences are the quasi-isomorphisms,

\item the cofibrations (resp.~trivial cofibrations) are the monomorphisms whose cokernels are in $\dg\Tilde{\mathscr A}$ (resp.~$\Tilde{\mathscr A}$), and

\item the fibrations (resp.~trivial fibrations) are the epimorphisms whose kernels are in $\dg\Tilde{\mathscr B}$ (resp.~$\Tilde{\mathscr B}$).

\end{itemize}

In the case $\mathscr G=\mathscr O_T\text{-Mod}$, the cotorsion pairs $(\mathscr O_T\text{-Mod},\mathscr I)$ and $(\mathscr F,\mathscr C)$ satisfy these conditions \cite{gillespie2007kaplansky}*{Corollaries 7.1 and 7.8}. The model structures on $\Ch(T)$ we get we call the \emph{injective model structure}, and the \emph{flat model structure}, respectively.

\begin{notn}

Let $T$ be an $S$-scheme. We denote by $\Ch_{\mathrm{qc}}(T)\subseteq\Ch(T)$ the full sub-dg-category on complexes of $\mathscr O_T$-modules with quasi-coherent cohomology. Let $\Ch_{\mathrm{qc},\mathrm{inj}}(T),\Ch_{\mathrm{qc},\mathrm{fl}}(T)$ be the full sub-dg-categories of dg-injective and dg-flat complexes respectively. Let's denote their dg-nerves by $\mathscr D(T)$ and $\mathscr D_{\mathrm{fl}}(T)$, respectively. Let's denote $N_\dg\Ch_{\mathrm{qc}}(T)$ by $\mathscr Ch(T)$.

\end{notn}

A pair of functors $U:\mathscr C \rightarrow \mathscr D$ and $F:\mathscr D\rightarrow \mathscr C$ between quasi-categories are \emph{adjoint}, if
there is a unit transformation $u: id_{\mathscr D}\rightarrow U\circ F$ such that
the composition
\[
\Map_{\mathscr C}(Fx,y)\rightarrow \Map_{\mathscr D}(U(Fx),U(y))
\xrightarrow{u}  \Map_{\mathscr D}(x,U(y))
\]
is a weak equivalence, see \cite[5.2.2.7,5.2.2.8]{lurie2009higher}.

The dg-injective resolution functor $\mathscr Ch(T)\xrightarrow{I_T}\mathscr D(T)$ will be a left adjoint to the inclusion $\mathscr D(T)\to\mathscr Ch(T)$. That is, $I_T$ will be a \emph{localization} functor. To show that such a functor exists, it is enough to show \cite{lurie2009higher}*{Proposition 5.2.7.8} that every complex $E\in\mathscr Ch(T)$ admits a \emph{$\mathscr D(T)$-localization}, that is there exists a dg-injective complex $I\in\mathscr D(T)$ and a morphism $E\xrightarrow qI$ such that for all dg-injective complexes $J\in\mathscr D(T)$, the precomposition map
$$
\Map_T(I,J)\xrightarrow{\circ q}\Map_T(E,J)
$$
is a weak equivalence.

Similarly, the dg-flat resolution functor $\mathscr Ch(T)\xrightarrow{P_T}\mathscr D_{\mathrm{fl}}(T)$ will be a right adjoint to the inclusion $\mathscr D_{\mathrm{fl}}(T)\to\mathscr Ch(T)$. To get it, we will need to show that every complex $E\in\mathscr Ch(T)$ admits a \emph{$\mathscr D_{\mathrm{fl}}(T)$-colocalization}, that is there exists a dg-flat complex $P\in\mathscr D_{\mathrm{fl}}(T)$ and a morphism $P\xrightarrow rE$ such that for all dg-flat complexes $Q\in\mathscr D_{\mathrm{fl}}(T)$, the postcomposition map
$$
\Map_T(Q,P)\xrightarrow{q\circ}\Map_T(Q,E)
$$
is a weak equivalence.

\begin{prop}\label{prop:inj, flat res}

Let $T$ be an $S$-scheme. Let $E\in\mathscr Ch(T)$. Then $E$ admits a $\mathscr D(T)$-localization, and a $\mathscr D_{\mathrm{fl}}(T)$-colocalization.

\end{prop}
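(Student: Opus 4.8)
The plan is to realize both resolutions as (co)fibrant replacements in the two Gillespie model structures on $\Ch(T)$ recalled above, and then to verify the universal properties by a short exact sequence argument on $\Hom$-complexes, fed into the computation $\pi_n\Map_{\ndg\mathbf C}(X,Y)\cong H^{-n}(\Hom^\bullet(X,Y))$ of the Lemma above.

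\emph{Localization.} I will put on $\Ch(T)$ the injective model structure associated to $(\mathscr O_T\text{-Mod},\mathscr I)$, whose fibrant objects are precisely the $\dg$-injective complexes, and take a fibrant replacement of $E$: a factorization $E\xrightarrow qI\to0$ with $q$ a trivial cofibration and $I$ $\dg$-injective. Then $q$ is a monomorphism and a quasi-isomorphism with $D:=\coker(q)$ an exact complex, and since $E$ has quasi-coherent cohomology so does $I$, so $I\in\mathscr D(T)$. For the universal property, fix a $\dg$-injective $J$. Since $\mathscr D(T)\subseteq\mathscr Ch(T)=\ndg\Ch_{\mathrm{qc}}(T)$ is a full subcategory, the precomposition map $\Map_T(I,J)\xrightarrow{\circ q}\Map_T(E,J)$ may be computed in $\mathscr Ch(T)$, where it is induced by the chain map $q^\ast\colon\Hom_\dg(I,J)\to\Hom_\dg(E,J)$; by the Lemma, $\pi_n$ of it is $H^{-n}(q^\ast)$, and since these mapping spaces are, up to equivalence, Dold--Kan complexes of connective chain complexes, it is a weak equivalence as soon as $q^\ast$ is a quasi-isomorphism. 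That I will get by applying $\Hom_\dg(-,J)$ to $0\to E\to I\to D\to0$: degreewise surjectivity holds because each $J^n$ is injective, so this yields a short exact sequence of chain complexes whose kernel term $\Hom_\dg(D,J)$ is acyclic, since $D$ and all its shifts are exact and $J$ is $\dg$-injective; the long exact cohomology sequence then shows $q^\ast$ is a quasi-isomorphism.

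\emph{Colocalization.} Dually, I will use the flat model structure associated to $(\mathscr F,\mathscr C)$, whose cofibrant objects are the $\dg$-flat complexes, and take a cofibrant replacement of $E$: a factorization $0\to P\xrightarrow rE$ with $P$ $\dg$-flat and $r$ a trivial fibration, so that $r$ is an epimorphism and a quasi-isomorphism whose kernel $C:=\ker(r)$ is a $\mathscr C$-complex; as before $P\in\mathscr D_{\mathrm{fl}}(T)$. For a $\dg$-flat $Q$ it suffices, just as above, that $r_\ast\colon\Hom_\dg(Q,P)\to\Hom_\dg(Q,E)$ be a quasi-isomorphism, and I will obtain this by applying $\Hom_\dg(Q,-)$ to $0\to C\to P\to E\to0$. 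Now injectivity of the target is not available; instead, since $C$ is exact, the sequences $0\to Z^mC\to C^m\to Z^{m+1}C\to0$ exhibit $C^m$ as an extension of cotorsion modules, hence cotorsion (because $\mathscr C=\mathscr F^\perp$ is closed under extensions), so $\Ext^1(Q^n,C^m)=0$ as $Q^n$ is flat; this supplies the required short exact sequence of chain complexes, whose kernel term $\Hom_\dg(Q,C)$ is acyclic by the defining property of $\dg$-$\mathscr F$-complexes applied to $C$ and its shifts.

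The crux --- and the only point at which the precise shape of the Gillespie model structures is used beyond the mere existence of factorizations --- is the exactness of the sequence obtained by applying $\Hom_\dg$: for the localization this is automatic from injectivity of the $J^n$, whereas for the colocalization it genuinely relies on the kernel of the trivial fibration $r$ being a $\mathscr C$-complex rather than merely exact, so that its components are cotorsion and annihilate $\Ext^1$ against the flat modules $Q^n$. A secondary point to be careful about is the passage from a quasi-isomorphism of $\Hom$-complexes to a weak equivalence of mapping spaces, which is exactly the content of the homotopy-group computation of the Lemma.
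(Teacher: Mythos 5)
Your proof is correct and follows essentially the same route as the paper: both take a (co)fibrant replacement in Gillespie's injective/flat model structures and deduce the universal property from a long exact sequence, using that every map between the relevant exact complex (the cokernel of $q$, resp.\ the kernel of $r$) and the dg-injective, resp.\ dg-flat, complex is nullhomotopic. You supply somewhat more detail than the paper's Lemma~\ref{lem:cotorsion to resolution} --- in particular the degreewise surjectivity of the induced sequence of $\Hom_\dg$-complexes, and working directly with $\Hom_\dg$ rather than $\RHom$, which is what the mapping-space computation actually requires --- so the extra care only strengthens the argument.
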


\begin{proof}

Applying Lemma \ref{lem:cotorsion to resolution} in the situation of \cite{gillespie2007kaplansky}*{Corollary 7.8} gives a $\mathscr D_{\mathrm{fl}}(T)$-colocalization. Applying the dual argument of Lemma \ref{lem:cotorsion to resolution} in the situation of \cite{gillespie2007kaplansky}*{Corollary 7.1} gives a $\mathscr D(T)$-localization.

\end{proof}

\begin{lem}\label{lem:cotorsion to resolution}

Let $\mathscr A$ be a class of objects in a Grothendieck category $\mathscr G$ satisfying the conditions of \cite{gillespie2007kaplansky}*{Theorem 4.12}. Let $\mathscr B=\mathscr A^\perp$. Let $E\in\Ch(\mathscr G)$. Then there exists a fibrant resolution $P\xrightarrow rE$. For any dg-$\Tilde{\mathscr A}$ complex $Q$, the postcomposition map between the derived Hom complexes
$$
\RHom(Q,P)\to\RHom(Q,E)
$$
is a quasi-isomorphism.

\end{lem}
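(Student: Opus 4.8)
The plan is to obtain $P$ as a cofibrant replacement of $E$ for the model structure attached to the cotorsion pair $(\mathscr A,\mathscr B)$, and then to compute the mapping complexes out of $Q$ from the short exact sequence exhibiting the resolution, using the orthogonality $\mathscr A\perp\mathscr B$. Concretely, I would first invoke \cite{gillespie2007kaplansky}*{Theorem 4.12}: under the standing hypothesis on $\mathscr A$, the category $\Ch(\mathscr G)$ carries the model structure recalled above, whose cofibrations are the monomorphisms with cokernel in $\dg\Tilde{\mathscr A}$ and whose trivial fibrations are the epimorphisms with kernel in $\Tilde{\mathscr B}$. Factoring $0\to E$ in this structure produces $0\to P\xrightarrow{r}E$ with $0\to P$ a cofibration and $r$ a trivial fibration; hence $P\in\dg\Tilde{\mathscr A}$, the map $r$ is a quasi-isomorphism, and $K=\ker r$ lies in $\Tilde{\mathscr B}$. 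This gives a short exact sequence of complexes $0\to K\to P\xrightarrow{r}E\to 0$, and $r$ is the asserted resolution.

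The second step is to check that $\Hom^\bullet(Q,K)$ is acyclic for every $Q\in\dg\Tilde{\mathscr A}$. For an integer $p$ the group $H^p\Hom^\bullet(Q,K)$ is the group of homotopy classes of chain maps $Q\to K[p]$. Since $K$ is exact with every cycle object $Z^nK$ in $\mathscr B$, each shift $K[p]$ inherits the same properties and is again an object of $\Tilde{\mathscr B}$, i.e.\ a $\mathscr B$-complex. By the defining property of a $\dg\Tilde{\mathscr A}$-complex, every chain map from $Q$ to a $\mathscr B$-complex is nullhomotopic, so $H^p\Hom^\bullet(Q,K)=0$ for all $p$.

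The third step transports this along the short exact sequence. For fixed integers $n,p$ the sequence
$$
0\to\Hom_{\mathscr G}(Q^n,K^{n+p})\to\Hom_{\mathscr G}(Q^n,P^{n+p})\to\Hom_{\mathscr G}(Q^n,E^{n+p})\to 0
$$
is exact: left exactness is automatic, and surjectivity on the right follows from $\Ext^1_{\mathscr G}(Q^n,K^{n+p})=0$, which holds because $Q^n\in\mathscr A$ and $K^{n+p}\in\mathscr B=\mathscr A^\perp$ (the latter since $K\in\Tilde{\mathscr B}$ and $\mathscr B$ is closed under extensions). Taking the product over $n$, which is exact in $\Ab$, yields for each $p$ a short exact sequence of complexes
$$
0\to\Hom^\bullet(Q,K)\to\Hom^\bullet(Q,P)\to\Hom^\bullet(Q,E)\to 0 .
$$
Its long exact cohomology sequence, combined with the acyclicity of $\Hom^\bullet(Q,K)$ from the second step, shows that $\Hom^\bullet(Q,P)\to\Hom^\bullet(Q,E)$ is a quasi-isomorphism; since $Q$ is dg-flat this is the required quasi-isomorphism $\RHom(Q,P)\to\RHom(Q,E)$.

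Each of these steps is short; the crux — the place I expect to have to be careful — is the third one. The Hom-complex functor is only left exact, so exactness of $\Hom^\bullet(Q,-)$ on $0\to K\to P\to E\to 0$ is not formal, and it is precisely the cotorsion-pair orthogonality $\mathscr A\perp\mathscr B$, applied degreewise, that provides it; likewise, the definition of a $\dg\Tilde{\mathscr A}$-complex only yields nullhomotopy of honest chain maps, so the stability of $\Tilde{\mathscr B}$ under shifts is needed to pass from $H^0$ to all of $H^\bullet$ in the second step.
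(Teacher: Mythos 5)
Your proof is correct and follows the paper's strategy in all essentials: factor $0\to E$ in the model structure attached to the cotorsion pair to get a cofibrant replacement $P\xrightarrow{r}E$ with $K=\ker r$ a $\Tilde{\mathscr B}$-complex, use the defining property of $\dg\Tilde{\mathscr A}$-complexes to kill all maps from $Q$ into (shifts of) $K$ up to homotopy, and conclude by a long exact sequence. Where you genuinely differ is in how that long exact sequence is produced. The paper passes to the derived category, invokes the distinguished triangle $K\to P\to E\to$ and the resulting $\Ext$ long exact sequence, and asserts $\Ext^m(Q,K)=0$ from the nullhomotopy of maps $Q\to K$; this tacitly identifies derived-category morphisms with homotopy classes of chain maps. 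You instead stay at the level of Hom complexes and establish the degreewise short exact sequence
$$
0\to\Hom^\bullet(Q,K)\to\Hom^\bullet(Q,P)\to\Hom^\bullet(Q,E)\to0,
$$
with surjectivity supplied by $\Ext^1_{\mathscr G}(Q^n,K^{n+p})=0$ (valid because $Q^n\in\mathscr A$ and $K^{n+p}\in\mathscr B$, the latter since $K$ is an exact complex with cycles in $\mathscr B$ and $\mathscr B=\mathscr A^\perp$ is closed under extensions), and you compute $H^p\Hom^\bullet(Q,K)$ directly as homotopy classes of maps $Q\to K[p]$, noting that $K[p]$ is again a $\mathscr B$-complex. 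Your route is more elementary and more careful on exactly the two points where the paper is terse, and it proves the statement for the underived Hom complex $\Hom_{\dg}(Q,-)$, which is in fact the form needed where the lemma is applied (the colocalization statement is about mapping spaces in the dg-nerve of all complexes, i.e.\ $\DK\tau_{\le0}$ of the underived Hom complex). Both arguments deliver the same conclusion; nothing is missing from yours.
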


\begin{proof}

Let $K$ be the kernel of $r$. Then we get a distinguished triangle $K\to P\to E\to$ in $\mathbf D(T)$ \cite{weibel1994introduction}*{Example 10.4.9}. Therefore, we get a long exact sequence
$$
\dotsb\to\Ext^n(Q,K)\to\Ext^n(Q,P)\to\Ext^n(Q,E)\to\Ext^{n+1}(Q,K)\to\dotsb
$$
By construction, $K$ is a $\Tilde{\mathscr B}$-complex. Therefore, every map $Q\to K$ is nullhomotopic. This implies that $\Ext^m(Q,K)=0$ for all $m$. This proves the claim.

\end{proof}

\begin{notn}\label{notn:derived pullback and pushforward}

Let $T$ be an $S$-scheme. By Proposition \ref{prop:inj, flat res}, we get an injective resolution functor $\mathscr Ch(T)\xrightarrow{I_T}\mathscr D(T)$ with unit map $\mathscr Ch(T)\times\Delta^1\xrightarrow{q_T}\mathscr Ch(T)$, and a flat resolution functor $\mathscr Ch(T)\xrightarrow{P_T}\mathscr D_{\mathrm{fl}}(T)$ with counit map $\mathscr Ch(T)\times\Delta^1\xrightarrow{r_T}\mathscr Ch(T)$. We will often drop the subscript $T$ from the notation.

Let $U\xrightarrow gT$ be a morphism of $S$-schemes. Then we get a derived pullback functor $\mathscr D(T)\xrightarrow{Lg^*=g^*\circ P_T}\mathscr D(U)$ and a derived pushforward functor $\mathscr D(U)\xrightarrow{Rg_*=g_*\circ I_U}\mathscr D(T)$.

\end{notn}

\begin{proposition}\label{p:adjointFunc}
Let $U\xrightarrow gT$ be a morphism of schemes.
\begin{enumerate}
\item The derived pullback functor $\mathscr D(T)\xrightarrow{Lg^*}\mathscr D(U)$ is left adjoint to the derived pushforward functor $\mathscr D(U)\xrightarrow{Rg_*}\mathscr D(T)$.
\item The functor $Rg_*$ restricts to a funtor
$\msD^{\ge n}(U)\rightarrow \msD^{\ge n}(T)$.
\end{enumerate}
\end{proposition}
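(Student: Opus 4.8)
The plan is to deduce part~(1) from the classical $(g^*,g_*)$ adjunction on $\mathscr O$-modules, promoted to cochain complexes, together with the (co)localization functors $I$ and $P$ of Notation~\ref{notn:derived pullback and pushforward}, and to deduce part~(2) from left-exactness of $g_*$.

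For part~(1), I would first recall that the additive functors $g^*$ and $g_*$ extend termwise to $\dg$-functors between the $\dg$-categories of cochain complexes of $\mathscr O_T$- and of $\mathscr O_U$-modules, and that the usual adjunction isomorphism promotes to an isomorphism $\Hom_\dg(g^*F,G)\cong\Hom_\dg(F,g_*G)$ of mapping complexes, natural in $F$ and $G$, with unit the chain maps $F\to g_*g^*F$. Applying $\ndg$ — and invoking the lemma identifying $\pi_\bullet\Map_{\ndg(\mathbf C)}(X,Y)$ with $H^{-\bullet}\Hom^\bullet(X,Y)$ — this produces a quasi-categorical adjunction between the $\dg$-nerves of complexes of $\mathscr O_T$- and of $\mathscr O_U$-modules, with unit obtained by applying $\ndg$ to the $\dg$-natural transformation $\id\Rightarrow g_*g^*$. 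Next I would restrict along the inclusions $\mathscr D(T)\hookrightarrow\mathscr Ch(T)$, $\mathscr D_{\mathrm{fl}}(T)\hookrightarrow\mathscr Ch(T)$ and compose with the resolution functors of Notation~\ref{notn:derived pullback and pushforward}; this defines the functors $Lg^*=g^*\circ P_T$ and $Rg_*=g_*\circ I_U$, with values in $\mathscr D(U)$, resp.\ $\mathscr D(T)$, using that $g^*$ is homotopical on dg-flat complexes, that $g_*$ is homotopical on dg-injective complexes, and that — $X\to S$, hence $g$, being quasi-compact and quasi-separated — $Rg_*$ preserves quasi-coherence of cohomology. Finally I would verify the mapping-space criterion for adjunctions \cite{lurie2009higher}*{\S5.2.2} through the chain of weak equivalences, natural in $X\in\mathscr D(T)$ and $Y\in\mathscr D(U)$,
\begin{align*}
\Map_{\mathscr D(U)}(Lg^*X,Y)&\simeq\Map_{\mathscr Ch(U)}(g^*P_TX,Y)\simeq\Map_{\mathscr Ch(T)}(P_TX,g_*Y)\\
&\simeq\Map_{\mathscr Ch(T)}(X,g_*Y)=\Map_{\mathscr D(T)}(X,Rg_*Y),
\end{align*}
where the first equivalence is the localization property of $I_U$ from Proposition~\ref{prop:inj, flat res}, the second is the $\dg$-adjunction, and the third uses that the counit $P_TX\to X$ is a quasi-isomorphism while $g_*Y$ represents $Rg_*Y$ and is $\Hom^\bullet$-acyclic against dg-flat complexes; the unit of the composite adjunction is the pasting of the unit built above with the (co)units of the resolution adjunctions. (Alternatively one checks that $(g^*,g_*)$ is a Quillen adjunction for the flat model structures of \S\ref{ss:cotorsion pairs} — $g^*$ is exact on short exact sequences with cokernel a complex of flat modules, hence preserves cofibrations and trivial cofibrations — and appeals to the fact that Quillen adjunctions induce adjunctions of the underlying quasi-categories.)

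For part~(2): given $J\in\mathscr D^{\ge n}(U)$, so $H^i(J)=0$ for $i<n$, left-exactness of $g_*$ gives $R^pg_*=0$ for $p<0$, and the hypercohomology spectral sequence $E_2^{p,q}=R^pg_*\,H^q(J)\Rightarrow H^{p+q}(Rg_*J)$ then forces $H^i(Rg_*J)=0$ for $i<n$; equivalently, one may replace $J$ by its good truncation $\tau_{\ge n}J$, choose an injective resolution concentrated in degrees $\ge n$, and apply $g_*$, which computes $Rg_*J$ on such resolutions and does not lower degrees. Since $\mathscr D^{\ge n}$ is, by the identification of homotopy groups of mapping spaces with cohomology of $\Hom^\bullet$-complexes, exactly the full subcategory of complexes acyclic below degree $n$, this says $Rg_*J\in\mathscr D^{\ge n}(T)$.

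I expect the main obstacle to lie in part~(1): since $g_*$ does not literally preserve dg-injectivity of unbounded complexes, one must argue carefully about which complexes represent $Rg_*Y$ and compute $R\Hom$ (this is where the $\Hom^\bullet$-acyclicity of $g_*Y$ against dg-flat complexes enters), and one must upgrade the informal chain above to a bona fide adjunction in the sense of \cite{lurie2009higher}*{\S5.2.2}, with an explicit unit transformation whose induced mapping-space map is the displayed composite. Part~(2) is comparatively routine once the dictionary between the standard truncations on $\mathscr D$ and vanishing of cohomology is in place.
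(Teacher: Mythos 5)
Your treatment of part (2) is correct, and your truncation argument is essentially the paper's (which takes a Cartan--Eilenberg resolution of $\tau_{\ge n}I$ concentrated in degrees $\ge n$ and applies $g_*$); the spectral-sequence variant works equally well for bounded-below complexes.

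For part (1) you take a different route from the paper, and it has a genuine gap at the decisive step. The third equivalence in your chain, $\Map_{\mathscr Ch(T)}(P_TX,g_*Y)\simeq\Map_{\mathscr Ch(T)}(X,g_*Y)$, is exactly the hard part of the unbounded derived adjunction, and the justification you offer --- that $g_*Y$ is ``$\Hom^\bullet$-acyclic against dg-flat complexes'' --- does not follow from anything set up in \S\ref{ss:cotorsion pairs}. Dg-flatness of $P_TX$ (equivalently, of the acyclic cone of $P_TX\to X$) controls tensor products, not $\Hom_{\dg}$-complexes; and $g_*Y$ is in general neither termwise injective (since $g^*$ is only right exact, $g_*$ need not preserve injectives) nor visibly dg-injective or dg-cotorsion when $Y$ is unbounded. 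The statement you need here is precisely the theorem the paper cites rather than proves: the paper quotes the quasi-isomorphism $\RHom_U(Lg^*I,J)\simeq\RHom_T(I,Rg_*J)$ from \cite{lipman2009notes}, applies $\DK\tau_{\le0}$ to get an equivalence of mapping spaces, and then promotes the resulting adjunction of simplicial categories to quasi-categories via \cite{lurie2014higher}*{Proposition 1.3.1.17} and \cite{lurie2009higher}*{Corollary 5.2.4.5}. Your parenthetical Quillen-adjunction alternative is the right way to repair a from-scratch argument: $g^*$ does preserve (trivial) cofibrations of the flat model structures, and in that model structure $\Hom_{\dg}(\text{dg-flat},\text{dg-cotorsion})$ genuinely computes derived Hom; but you would then still need to identify the resulting right derived functor of $g_*$ (computed on dg-cotorsion resolutions) with the paper's $Rg_*=g_*\circ I_U$ --- which follows from Ken Brown's lemma once one notes that dg-injective complexes are dg-cotorsion --- and to identify the underlying quasi-category of that model structure with $\mathscr D(U)$. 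As written, neither branch of your argument closes this step.
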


\begin{proof} (1) We have a natural quasi-isomorphism $\RHom_U(Lg^*I,J)\xrightarrow\alpha\RHom_V(I,Rg_*J)$ \cite{lipman2009notes}*{Proposition 3.2.3}, which gives a natural equivalence $\Map_U(Lg^*I,J)\xrightarrow{\DK\tau_{\le0}\alpha}\Map_V(I,Rg_*J)$ \cite{schwede2003equivalences}*{\S4.1}. This shows that $(Lf^*,Rg_*)$ gives an adjoint pair of simplicial categories $\smalladjoints{\mathfrak C\mathscr D(U)}{\mathfrak C\mathscr D(T)}{\mathfrak CLf^*}{\mathfrak CRg_*}$ \cite{lurie2014higher}*{Proposition 1.3.1.17}. Therefore, a functor equivalent to $Lf^*$ is left adjoint to a functor equivalent to $Rg_*$ \cite{lurie2009higher}*{Corollary 5.2.4.5}, and that is enough \cite{lurie2009higher}*{Proposition 5.2.1.4}.

(2) Let $I\in\mathscr D^{\ge n}(U)$. Then $I\simeq\tau_{\ge n}I$. By the Cartan--Eilenberg resolution, we get an injective complex $J$ such that $J\simeq\tau_{\ge n}I$, and $J^m=0$ for $m<n$. Since we have $I(I)\simeq J$, a zigzag of quasi-isomorphisms between dg-injective complexes, we get $Rg_*I=g_*I(I)\simeq g_*J$. Here, $g_*J\in\mathscr D^{\ge n}(T)$ by construction. Therefore, we get $Rg_*I\in\mathscr D^{\ge n}(T)$.

\end{proof}

%%%%%%%%%%%%%%%%%% Twisted arrow categories %%%%%%%%%%%%%%%%%

\section{Twisted arrow categories and mapping spaces}\label{s:twisted}

Let $\bC$ be an ordinary category. The twisted arrow category of $\bC$ is the category denoted $\Tw(\bC)$,
whose objects are arrows in $\bC$. A morphism in $\Tw(\bC)$ from $m'\xrightarrow{\alpha'}n'$ to $m\xrightarrow\alpha n$ amounts
to a commutative diagram in $\bC$ of the form
\begin{center}

\begin{tikzpicture}[xscale=2,yscale=1.5]
\node (C') at (0,1) {$m'$};
\node (D') at (1,1) {$m$};
\node (C) at (0,0) {$n'$};
\node (D) at (1,0) {$n$.};
\path[->,font=\scriptsize,>=angle 90]
(C') edge node [above] {$\mu$} (D')
(C') edge node [right] {$\alpha'$} (C)
(D') edge node [right] {$\alpha$} (D)
(D) edge node [above] {$\nu$} (C);
\end{tikzpicture}

\end{center}
Notice that morphisms $\mu$ and $\nu$ are in opposite directions, so that
we have a functor $\Tw(\bC)\rightarrow \bC\times \bC^\op$.

 This construction
has been  made for quasi-categories, see \cite[5.2]{lurie2014higher}. Let us briefly recall it.
If $I$ and $J$ are finite ordered sets we can form the ordered set $I\star J$. The underlying
set of $I\star J$ is the disjoint union of $I$ and $J$ and the elements of $I$ are placed before
those of $J$ in the ordering. If $S$ is a simplicial set, the $n$ simplices of
$\Tw(S)$ are $S([n]\star[n]^\op)$. The op is important for the simplicial structure.

The main application of the twisted arrow category is that it is a right fibration classified by the mapping space functor. Let $\mathscr C$ be a quasi-category. Then the canonical map $\Tw\mathscr C\xrightarrow\lambda\mathscr C\times\mathscr C^\op$ is a right fibration
\cite{lurie2014higher}*{Proposition 5.2.1.3}. That is, it is classified by a presheaf $\mathscr C^\op\times\mathscr C\to\mathscr S$. Then the corresponding map $\mathscr C\to\mathscr P(\mathscr C)$ is equivalent to the Yoneda embedding \cite{lurie2014higher}*{5.2.1.11}. Therefore, we will denote the map $\mathscr C^\op\times\mathscr C\to\mathscr S$ by $\Map_{\mathscr C}$.

This can be used to give formulas for mapping spaces in functor categories. Let $F,G\colon K\rightrightarrows\mathscr X$ be functors of quasi-categories. Then we have
$$
\Map_{\Fun(K,\mathscr C)}(F,G)=\lim((\Tw K)^\op\xrightarrow{\lambda^\op}K^\op\times K\xrightarrow{F^\op\times G}\mathscr X^\op\times\mathscr X\xrightarrow{\Map_{\mathscr X}}\mathscr S)
$$
\cite{gepner2017lax}*{Definition 2.5, Proposition 5.1}. Here, by the limit of the functor we mean the limit of the diagram of spaces $(\Tw\mathscr X)^\op\xrightarrow{(m\xrightarrow{\alpha}n)\mapsto\Map_{\mathscr X}(F_m,G_m)}\mathscr S$.

Based on this result, we will give formulas for mapping spaces in section categories, Proposition \ref{prop:Map of Gamma}:
$$
\Map_{\Gamma(K,\mathscr X)}(F,G)=\lim((\Tw K)^\op\xrightarrow{(m\xrightarrow\alpha n)\mapsto\Map_{\alpha}(F_m,G_n)}\mathscr S),
$$
and Cartesian section categories, Proposition \ref{prop:Map of Gamma_Cart}:
$$
\Map_{\Gamma_{\Cart}(K,\mathscr X)}(F,G)=\lim(K^\op\xrightarrow{m\mapsto\Map_m(F_m,G_m)}\mathscr S).
$$

\subsection{Mapping spaces in the section quasi-category}

\begin{rem}

Let $L\xrightarrow kK$ be a morphism of simplicial sets, and $\mathscr X\to K$ an inner fibration. In the notation of \cite{lurie2009higher}, we have $\Gamma(k,\mathscr X)=\Map_K(L,\mathscr X)$.

\end{rem}

\begin{lem}\label{lem:twisted arrow for Fun}

Let $\mathscr X\xrightarrow pK$ be an inner fibration and $F,G\in\Gamma(K,\mathscr X)$ two sections. Then we can form the fibre product of the maps
$$
\Tw K\xrightarrow{\lambda_K}K\times K^\op\xrightarrow{F\times G^\op}\mathscr X\times\mathscr X^\op\text{ and }\Tw\mathscr X\xrightarrow{\lambda_{\mathscr X}}\mathscr X\times\mathscr X^\op.
$$
Then the induced map
$$
\Tw K\times_{\mathscr X\times\mathscr X^\op}\Tw\mathscr X\xrightarrow{\Tw K\times_{\mathscr X\times\mathscr X^\op}\Tw p}\Tw K\times_{\mathscr X\times\mathscr X^\op}\Tw K=\Tw K\times_{K\times K^\op}\Tw K
$$
is a right fibration.

\end{lem}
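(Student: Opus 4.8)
The plan is to identify the map in the statement with a base change of a map one recognises directly as a right fibration, and then to check that map by a right‑horn‑filling argument that, once the twisted‑arrow construction is unwound, comes down to a purely combinatorial fact — essentially the one underlying \cite{lurie2014higher}*{Proposition 5.2.1.3}. First, since $F$ and $G$ are sections of $p$ they are monomorphisms, hence so is $F\times G^\op$, and therefore so is $(F\times G^\op)\circ\lambda_K$; this is what legitimises the identification $\Tw K\times_{\mathscr X\times\mathscr X^\op}\Tw K=\Tw K\times_{K\times K^\op}\Tw K$ of the statement. Set $R:=\Tw K\times_{K\times K^\op}(\mathscr X\times\mathscr X^\op)$, and let $\phi\colon\Tw\mathscr X\to R$ be the canonical map built from $\Tw p$ and $\lambda_{\mathscr X}$ (well defined because $\lambda_K\circ\Tw p=(p\times p^\op)\circ\lambda_{\mathscr X}$). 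Then the map of the lemma is the base change of $\phi$ along the map
\[
\Tw K\times_{K\times K^\op}\Tw K\longrightarrow R,\qquad(x',x'')\longmapsto\bigl(x'',\,(F\times G^\op)\lambda_K(x')\bigr).
\]
Indeed, forming this pullback and using $(p\times p^\op)(F\times G^\op)=\id$ reproduces $\Tw K\times_{\mathscr X\times\mathscr X^\op}\Tw\mathscr X$, the second $\Tw K$‑coordinate being recovered as $\Tw p$ of the $\Tw\mathscr X$‑coordinate, and the projection to the first factor becomes the map $\Tw K\times_{\mathscr X\times\mathscr X^\op}\Tw p$ of the statement. Since right fibrations are stable under base change, it suffices to prove that $\phi$ is a right fibration.

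To do that I would verify the right‑horn‑lifting property for $\phi$ directly. Recall that $\Tw S$ is the restriction of $S$ along $[n]\mapsto[n]\star[n]^\op$, so $\Tw(S)_n=S_{2n+1}$, and under this identification the $j$‑th face of an $m$‑simplex of $\Tw S$ is the restriction of the corresponding $(2m+1)$‑simplex of $S$ to the face spanned by $\{0,\dots,2m+1\}\setminus\{j,\,2m+1-j\}$. Fix $0<k\le m$ and a lifting square for $\phi$ with left‑hand edge $\Lambda^m_k\hookrightarrow\Delta^m$. Unwinding: the map $\Delta^m\to R$ is a map $\alpha\colon\Delta^{2m+1}\to K$ together with lifts of $\alpha$ along $p$ over the front face $\Delta^{\{0,\dots,m\}}$ and the back face $\Delta^{\{m+1,\dots,2m+1\}}$; the map $\Lambda^m_k\to\Tw\mathscr X$ gives lifts of $\alpha$ along $p$ over each face $\Delta^{\{0,\dots,2m+1\}\setminus\{j,\,2m+1-j\}}$ with $0\le j\le m$, $j\ne k$. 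These agree on overlaps, so they glue to a single map into $\mathscr X$, over $\alpha$, defined on
\[
E_{m,k}\ :=\ \Delta^{\{0,\dots,m\}}\ \cup\ \Delta^{\{m+1,\dots,2m+1\}}\ \cup\ \bigcup_{\substack{0\le j\le m\\ j\ne k}}\Delta^{\{0,\dots,2m+1\}\setminus\{j,\,2m+1-j\}}\ \subseteq\ \Delta^{2m+1},
\]
and a solution of the lifting problem is exactly an extension of this map to $\Delta^{2m+1}$, over $\alpha$. As $p$ is an inner fibration, such an extension exists provided $E_{m,k}\hookrightarrow\Delta^{2m+1}$ is inner anodyne.

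The whole statement therefore reduces to the combinatorial claim that $E_{m,k}\hookrightarrow\Delta^{2m+1}$ is inner anodyne for every $0<k\le m$, and this is exactly the combinatorial ingredient in the proof that $\lambda_{\mathscr X}$ is a right fibration for a quasi‑category \cite{lurie2014higher}*{Proposition 5.2.1.3}: one linearly orders the nondegenerate simplices of $\Delta^{2m+1}$ not contained in $E_{m,k}$ and attaches them one at a time, each by filling a single inner horn. That argument is insensitive to the base and applies verbatim here. (As a check, for $m=k=1$ one gets $E_{1,1}=\Delta^{\{0,1\}}\cup\Delta^{\{1,2\}}\cup\Delta^{\{2,3\}}$, the spine of $\Delta^3$, whose inclusion is inner anodyne.) I expect this filtration to be the one genuinely nonformal point; everything before it is bookkeeping about fibre products and the twisted‑arrow simplicial structure.
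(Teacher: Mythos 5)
Your proof is correct, and it takes a genuinely different route from the paper's. The paper attacks the stated map head-on: it unwinds the lifting problem against $\Lambda^n_k\subseteq\Delta^n$ into an extension problem along a subcomplex $P\subseteq\Delta^{2n+1}$ and then builds an explicit filtration $P=P_0\subset\dotsb\subset\Delta^{2n+1}$ by inner anodyne pushouts. You instead first reduce, by a base change, to the single map $\phi\colon\Tw\mathscr X\to\Tw K\times_{K\times K^\op}(\mathscr X\times\mathscr X^\op)$ (your pullback identification checks out, using $\lambda_K\circ\Tw p=(p\times p^\op)\circ\lambda_{\mathscr X}$ and $p\circ F=p\circ G=\id$), and then isolate the combinatorial content. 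Two comments on the comparison. First, your identification of the domain of the given data is actually the more careful one: a map $\Lambda^m_k\to\Tw\mathscr X$ is adjoint to a map out of the union of the \emph{mirror-symmetric} facets $\bigcup_{j\ne k}\Delta^{[0,2m+1]\setminus\{j,\,2m+1-j\}}$, which is strictly smaller than the join $\Lambda^m_k\star(\Lambda^m_k)^\op$ that the paper uses (for $m=2$, $k=1$ the face $\{1,2,4,5\}$ lies in the join but is not determined by a horn in $\Tw\mathscr X$); so your $E_{m,k}$ is the correct starting subcomplex, whereas the paper's filtration begins from a larger $P$ on which the data is not all given. Second, the one step you do not carry out is the proof that $E_{m,k}\hookrightarrow\Delta^{2m+1}$ is inner anodyne; citing \cite{lurie2014higher}*{Proposition 5.2.1.3} for this is legitimate, since the lifting problem there unwinds to exactly this extension problem and the argument there does establish inner-anodyne-ness (mere solvability for quasi-categories would not suffice in the relative setting), but a fully self-contained write-up would reproduce that filtration, which is the part of the argument the paper spells out.
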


\begin{proof}

Let $n\ge1$ and $0<k\le n$. Let $\Delta^{2n+1}\xrightarrow{\mathrm{mir}}(\Delta^{2n+1})^\op$ denote the mirror map $j\mapsto 2n+1-j$. We need to solve the lifting problem
\begin{center}

\begin{tikzpicture}[xscale=4,yscale=1.5]
\node (C') at (0,1) {$\Lambda^n_k$};
\node (D') at (1,1) {$\Tw K\times_{\mathscr X\times\mathscr X^\op}\Tw\mathscr X$};
\node (C) at (0,0) {$\Delta^n$};
\node (D) at (1,0) {$\Tw K\times_{K\times K^\op}\Tw K$,};
\path[->,font=\scriptsize,>=angle 90]
(C') edge node [above] {$(\tau|\Lambda^n_k,\sigma)$} (D')
(C') edge (C)
(D') edge (D)
(C) edge node [below] {$(\tau,\Bar\sigma)$} (D)
(C) edge [dashed] (D');
\end{tikzpicture}

\end{center}
that is we need to find $\Delta^n\xrightarrow{\Tilde\sigma}\Tw\mathscr X$ such that $\Tilde\sigma|\Lambda^n_k=\sigma,\,\Tw p\circ\Tilde\sigma=\Bar\sigma$ and $\lambda_{\mathscr X}\circ\Tilde\sigma=(F\times G^\op)\circ\lambda_K\circ\tau=(F\times G^\op)\circ\lambda_K\circ\Bar\sigma$. Let's rephrase this as a lifting problem
\begin{center}

\begin{tikzpicture}[xscale=3,yscale=1.5]
\node (C') at (0,1) {$P$};
\node (D') at (1,1) {$\mathscr X$};
\node (C) at (0,0) {$\Delta^{2n+1}$};
\node (D) at (1,0) {$K$,};
\path[->,font=\scriptsize,>=angle 90]
(C') edge (D')
(C') edge (C)
(D') edge (D)
(C) edge (D)
(C) edge [dashed] (D');
\end{tikzpicture}

\end{center}
that is let's find the simplicial subset $P\subseteq\Delta^{2n+1}$ on which the value in $\mathscr X$ has been fixed. The map $\Lambda^n_k\xrightarrow\sigma\Tw\mathscr X$ corresponds by construction to a map $(\Lambda^n_k)\star(\Lambda^n_k)^\op\to\mathscr X$. The map $\Delta^n\xrightarrow{(F\times G^\op)\circ\lambda_K\circ\Bar\sigma}\mathscr X\times\mathscr X^\op$ corresponds to a map $\Delta^{[0,n]}\cup\Delta^{[n+1,2n+1]}\to\mathscr X$. Therefore, we have $P=((\Lambda^n_k)\star(\Lambda^n_k)^\op)\cup\Delta^{[0,n]}\cup\Delta^{[n+1,2n+1]}$. Let $T$ be the vertex set of a face of $P$. Then it is one of the following two types.

\begin{enumerate}

\item We have $([0,n]\setminus\{k\})\not\subseteq T$ and $([n+1,2n+1]\setminus\{2n+1-k\})\not\subseteq T$.

\item We have $T\subseteq[0,n]$ or $T\subseteq[n+1,2n+1]$.

\end{enumerate}

That is, $P\subset\Delta^{2n+1}$ is the largest simplicial subset which does not have any of the following faces.

\begin{itemize}

\item Faces with vertex set $([0,n]-\{k\})\cup S'$ for a nonempty subset $S'\subseteq[n+1,2n+1]$.

\item Faces with vertex set $S''\cup([n+1,2n+1]-\{2n+1-k\})$ for a nonempty subset $S''\subseteq[0,n]$.

\end{itemize}

Consider the chain of inclusions
$$
P=P_0\subset P_1\subset\dotsb\subset P_{n+1}=\Delta^{2n+1},
$$
where $P_\ell\subseteq\Delta^{2n+1}$ is the largest sub-simplicial set which does not have any of the following faces.

\begin{itemize}

\item Faces with vertex set $([0,n]-\{k\})\cup S'$ for a subset $S'\subseteq[n+1,2n+1]$ of size larger than $\ell$.

\item Faces with vertex set $S''\cup([n+1,2n+1]-\{2n+1-k\})$ for a subset $S''\subseteq[0,n]$ of size larger than $\ell$.

\end{itemize}

We will solve the lifting problem by ascending this chain one by one. Suppose that we have already lifted to a map $P_{\ell}\to\mathscr X$ for some $0\le\ell\le n$. Let
$$
\{S'\subseteq[n+1,2n+1]:|S'|=\ell+1\}=\{S'_1,\dotsc,S'_{\binom{n+1}{\ell+1}}\}\text{ and }
\{S''\subseteq[0,n]:|S''|=\ell+1\}=\{S''_1,\dotsc,S''_{\binom{n+1}{\ell+1}}\}.
$$
Consider the chain of inclusions
$$
P_{\ell}=P'_{\ell,0}\subset P'_{\ell,1}\subset\dotsb\subset P'_{\ell,\binom{n+1}{\ell+1}}=P''_{\ell,0}\subset P''_{\ell,1}\subset\dotsb\subset P''_{\ell,\binom{n+1}{\ell+1}}=P_{\ell+1},
$$
where $P'_{\ell,i}\subseteq\Delta^{2n+1}$ is the largest sub-simplicial set which does not have any of the following faces.

\begin{itemize}

\item Faces with vertex set $([0,n]-\{k\})\cup S'_j$ for $i<j\le\binom{n+1}{\ell+1}$.

\item Faces with vertex set $([0,n]-\{k\})\cup S'$ for a subset $S'\subseteq[n+1,2n+1]$ of size larger than $\ell+1$.

\item Faces with vertex set $S''\cup([n+1,2n+1]-\{2n+1-k\})$ for a subset $S''\subseteq[0,n]$ of size larger than $\ell$,

\end{itemize}
and $P''_{\ell,i}\subseteq\Delta^{2n+1}$ is the largest sub-simplicial set which does not have any of the following faces.

\begin{itemize}

\item Faces with vertex set $([0,n]-\{k\})\cup S'$ for a subset $S'\subseteq[n+1,2n+1]$ of size larger than $\ell+1$.

\item Faces with vertex set $S''_j\cup([n+1,2n+1]-\{2n+1-k\})$ for $i<j\le\binom{n}{\ell+1}$.

\item Faces with vertex set $S''\cup([n+1,2n+1]-\{2n+1-k\})$ for a subset $S''\subseteq[0,n]$ of size larger than $\ell+1$.

\end{itemize}
Since the inclusions
$$
P'_{\ell,i}\subset P'_{\ell,i+1}=P'_{\ell,i}\sqcup_{\Lambda^{[0,n]}_k\star\Delta^{S'_{i+1}}}(\Delta^{[0,n]}\star\Delta^{S'_{i+1}})\text{ and }
P''_{\ell,i}\subset P''_{\ell,i+1}=P''_{\ell,i}\sqcup_{\Delta^{S''_{i+1}}\star\Lambda^{[n+1,2n+1]}_{2n+1-k}}(\Delta^{S''_{i+1}}\star\Delta^{[n+1,2n+1]})
$$
are inner anodyne, the induction step is proven.

\end{proof}

\begin{lemma}\label{l:isKan}
Let ${\mathscr Y}\rightarrow T$ be a right fibration. Then $\Gamma(T,{\mathscr Y})$ is a Kan complex.
\end{lemma}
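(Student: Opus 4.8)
The plan is to recognize $\Gamma(T,\mathscr Y)$ as a mapping object in marked simplicial sets and invoke the characterization of fibrant objects. Recall from \S\ref{ss:quasicategories} that $\Gamma(T,\mathscr Y) = \Map_T(T, \mathscr Y)$, where $T\to T$ is the identity map. Since a right fibration is in particular an inner fibration, $\Gamma(T,\mathscr Y)$ is a quasi-category: it is the pullback $\{\id_T\}\times_{\Fun(T,T)}\Fun(T,\mathscr Y)$, and $\Fun(T,\mathscr Y)\to\Fun(T,T)$ is an inner fibration because $\mathscr Y\to T$ is, so the pullback along the point $\{\id_T\}$ is a quasi-category. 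Thus it suffices to show that every edge of $\Gamma(T,\mathscr Y)$ is an equivalence, equivalently that the homotopy category $\tau_1\Gamma(T,\mathscr Y)$ is a groupoid; then $\Gamma(T,\mathscr Y)$, being a quasi-category whose homotopy category is a groupoid, is a Kan complex by \cite{lurie2009higher}*{Proposition 1.2.5.1}.

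First I would reduce to a lifting statement against the horn inclusions $\Lambda^n_0\subset\Delta^n$ and $\Lambda^n_n\subset\Delta^n$ (the outer horns), since inner horns are already filled. By adjunction, a lifting problem of $\Lambda^n_j\subset\Delta^n$ against $\Gamma(T,\mathscr Y)$ over the point transposes to a lifting problem
\[
\begin{tikzcd}
(\Lambda^n_j\times T)\sqcup_{\Lambda^n_j\times\{t\}}(\Delta^n\times\{t\}) \ar[r]\ar[d] & \mathscr Y \ar[d]\\
\Delta^n\times T \ar[r] & T
\end{tikzcd}
\]
---wait, more carefully: a lift of $\Lambda^n_j\to\Gamma(T,\mathscr Y)$ to $\Delta^n$ is the same as a lift in
\[
\begin{tikzcd}
\Lambda^n_j\times T \ar[r]\ar[d] & \mathscr Y \ar[d]\\
\Delta^n\times T \ar[r] & T,
\end{tikzcd}
\]
where the bottom map is the projection. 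The key point is then that $\mathscr Y\to T$, being a right fibration, has the right lifting property against the pushout-product (smash) $(\Lambda^n_j\subset\Delta^n)\hat\times(\partial\Delta^m\subset\Delta^m)$ for all $m$ when $0<j\le n$, and indeed against $(\Lambda^n_j\subset\Delta^n)\hat\times(\emptyset\subset\Delta^m)$; since $\Delta^m$ is a coproduct of its nondegenerate simplices viewed via skeletal filtration, this reduces to a standard argument that $\Lambda^n_j\times\Delta^m\subset\Delta^n\times\Delta^m$ is, for $0<j\le n$, a composite of pushouts of right-anodyne maps—this is \cite{lurie2009higher}*{Corollary 2.1.2.7} applied to right fibrations (the dual of the ``$\Lambda^n_k\subset\Delta^n$ inner/left-anodyne'' product lemma). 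Hence $\Lambda^n_j\times T\to\Delta^n\times T$ is right-anodyne for $0<j\le n$, and the lift exists.

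The main obstacle is the outer horn $\Lambda^n_0$: the product $\Lambda^n_0\times T\subset \Delta^n\times T$ is \emph{not} in general right-anodyne, so we cannot fill it for arbitrary $T$. Here the right thing is a soft argument: it suffices to check the Kan condition for $n=1$ outer horns, i.e.\ that edges are invertible, because \cite{lurie2009higher}*{Proposition 1.2.5.1} says a quasi-category is a Kan complex iff its homotopy category is a groupoid. So I would show: given a section $\sigma\colon T\to\mathscr Y$ and an edge $e$ in $\Gamma(T,\mathscr Y)$ with source $\sigma$, the edge $e$ is an equivalence. For each vertex $t\in T$, the edge $e$ restricts to an edge $e_t$ of the fibre $\mathscr Y_t$; but $\mathscr Y_t$ is a right fibration over a point, hence a Kan complex, so $e_t$ is an equivalence in $\mathscr Y_t$. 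An edge of $\mathscr Y$ lying over a degenerate edge of $T$ and restricting to an equivalence in each fibre is $p$-Cartesian (trivially, since $T\to T$ downstairs—here we use that $e$ lies over $\id$, so it lies over a degenerate edge and the relevant ``downstairs'' data is trivial). Actually cleaner: $e$ as an edge of $\Gamma(T,\mathscr Y)=\Map_T(T,\mathscr Y)$ is by definition a map $T\times\Delta^1\to\mathscr Y$ over $T$; I claim it factors through $\mathscr Y_{\mathrm{eq}}$, the maximal Kan complex, fibrewise, and then standard stability of right fibrations (a right fibration over a Kan complex is a Kan fibration, \cite{lurie2009higher}*{Lemma 2.1.3.3}, combined with $T\times\Delta^1\to T$) gives that $e$ extends to $T\times(\Delta^1)^{\mathrm{gpd}}$, exhibiting $e$ as an equivalence. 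Once every edge is an equivalence, $\tau_1\Gamma(T,\mathscr Y)$ is a groupoid and we conclude. The one genuinely delicate point to get right is the fibrewise-to-global passage for equivalences—asserting that an edge of a section category that is a fibrewise equivalence is an equivalence—which I would handle by the right-anodyne product lemma once more, now for the inclusion $\{1\}\hookrightarrow\Delta^1$ (which is right anodyne) tensored with $T$, giving that $e$ admits the required extension.
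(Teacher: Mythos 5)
Your proposal is correct in substance, and its working core is exactly the paper's argument --- but you stop just short of the easy conclusion and instead append a detour that you never fully close. The paper's proof is two lines: the right-anodyne pushout--product lemma (\cite{lurie2009higher}*{Corollary 2.1.2.7}, dualized) shows that $\Fun(T,\mathscr Y)\to\Fun(T,T)$ is a right fibration, so its fibre $\Gamma(T,\mathscr Y)$ over $\{\id_T\}$ is a right fibration over a point, hence a Kan complex by \cite{lurie2009higher}*{Lemma 2.1.3.3}. Your middle paragraph proves precisely this: transposing the lifting problem for $\Lambda^n_j\subset\Delta^n$ with $0<j\le n$ to the right-anodyne inclusion $\Lambda^n_j\times T\subset\Delta^n\times T$ shows that $\Gamma(T,\mathscr Y)\to\ast$ is a right fibration, and at that moment you are done by the very Lemma 2.1.3.3 you invoke later for the fibres $\mathscr Y_t$. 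Everything after that --- the outer horns $\Lambda^n_0$, the reduction via \cite{lurie2009higher}*{Proposition 1.2.5.1} to $\tau_1\Gamma(T,\mathscr Y)$ being a groupoid, and the fibrewise-equivalence analysis --- is unnecessary, and its final step is the one genuine soft spot: the passage from ``$e$ restricts to an equivalence in each fibre $\mathscr Y_t$'' to ``$e$ is an equivalence in $\Gamma(T,\mathscr Y)$'' is only gestured at (``the right-anodyne product lemma once more \dots gives the required extension''), with no precise statement of which anodyne extension is being performed; the most natural way to make it rigorous is again the observation that a right fibration over a point is a Kan complex, i.e.\ the argument you are circling around. So: same key lemma as the paper, correct up to the point where the proof should end, followed by an avoidable and incompletely executed coda. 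Trim it to the first two steps and cite \cite{lurie2009higher}*{Lemma 2.1.3.3} directly.
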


\begin{proof}
It is easy to see that ${\mathscr Y}^T\rightarrow T^T$ is a right fibration. Hence,
$\Gamma(T,{\mathscr Y})$ is a right fibration over a point. Hence a Kan complex, \cite[Lemma 2.1.3.3]{lurie2009higher}.
\end{proof}

\begin{prop}\label{prop:Map of Gamma}

Let $\mathscr X\xrightarrow pK$ be an inner fibration between quasi-categories. Let $F,G\in\Gamma(K,\mathscr X)$ be two sections. Then by Lemma \ref{lem:twisted arrow for Fun}, the induced map $\Tw K\times_{\mathscr X\times\mathscr X^\op}\Tw\mathscr X\to\Tw K\times_{K\times K^\op}\Tw K$ is a right fibration. Therefore, its pullback $Z\xrightarrow q\Tw K$ along the diagonal $\Tw K\xrightarrow{\Delta_{\Tw K}}\Tw K\times_{K\times K^\op}\Tw K$ is also a right fibration. The right fibration $q$ is classified by the diagram
$$
(\Tw K)^\op\xrightarrow{(m\xrightarrow\alpha n)\mapsto\Map_\alpha(F(m),G(n))}\mathscr S.
$$
Then the (homotopy) limit of this diagram is $\Map_{\Gamma(K,\mathscr X)}(F,G)$.

\end{prop}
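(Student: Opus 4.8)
The plan is to bootstrap from the twisted‑arrow formula for mapping spaces in a functor quasi‑category \cite{gepner2017lax}*{Proposition 5.1}, using the principle that the limit of the diagram classifying a right fibration is computed by the Kan complex of its sections. First I would record the fibres of $q$: unwinding the two pullbacks defining $Z$, an object of $Z$ over $\alpha\colon m\to n$ in $\Tw K$ is a twisted arrow of $\mathscr X$ with source $F(m)$ and target $G(n)$ which $\Tw p$ sends to $\alpha$, so the fibre $Z_\alpha$ is the fibre of $\Map_{\mathscr X}(F(m),G(n))\to\Map_K(m,n)$ over $\alpha$ — which we take as the definition of $\Map_\alpha(F(m),G(n))$. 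Since $q$ is a right fibration (Lemma \ref{lem:twisted arrow for Fun} and stability of right fibrations under base change), it is classified by a functor $(\Tw K)^\op\to\mathscr S$, and by the fibre computation that functor is the one in the statement. Finally, for any right fibration $r\colon Y\to B$ the equivalence between $\PSh(B)$ and right fibrations over $B$ \cite{lurie2009higher}*{2.2.1.2} identifies the classifying functor $\chi\colon B^\op\to\mathscr S$ with an object of $\PSh(B)$ and its limit $\Map_{\PSh(B)}(\ast,\chi)$ with the section space $\Gamma(B,Y)$, a Kan complex by Lemma \ref{l:isKan}. Applied to $q$, this reduces the proposition to producing an equivalence $\Gamma(\Tw K,Z)\simeq\Map_{\Gamma(K,\mathscr X)}(F,G)$.

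For this equivalence I would argue as follows. Base‑changing the right fibration $\Tw\mathscr X\to\mathscr X\times\mathscr X^\op$ of \cite{lurie2014higher}*{Proposition 5.2.1.3} along $(F\times G^\op)\circ\lambda_K$ shows that $\Tw K\times_{\mathscr X\times\mathscr X^\op}\Tw\mathscr X\to\Tw K$ is a right fibration classified by $(m\xrightarrow{\alpha}n)\mapsto\Map_{\mathscr X}(F(m),G(n))$, so by the principle above together with \cite{gepner2017lax}*{Proposition 5.1} its section space is $\Map_{\Fun(K,\mathscr X)}(F,G)$. Running the same argument with $\mathscr X$ replaced by $K$ and $F=G=\id_K$ identifies the section space of $\Tw K\times_{K\times K^\op}\Tw K\to\Tw K$ with $\Map_{\Fun(K,K)}(\id_K,\id_K)$, the diagonal $\Delta_{\Tw K}$ being the section that corresponds to $\id_{\id_K}$ (its component at $\alpha$ is $\alpha$ itself). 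The inner fibration $p$ induces a map of right fibrations over $\Tw K$ from $\Tw K\times_{\mathscr X\times\mathscr X^\op}\Tw\mathscr X$ to $\Tw K\times_{K\times K^\op}\Tw K$ (here one uses $pF=pG=\id_K$); on section spaces this is the map $\Map_{\Fun(K,\mathscr X)}(F,G)\to\Map_{\Fun(K,K)}(\id_K,\id_K)$ induced by postcomposition with $p$, and $Z$ is by construction the fibre of this map of right fibrations over $\Delta_{\Tw K}$. Since taking the section space is taking $\lim_{(\Tw K)^\op}$, it preserves this fibre, so
$$
\Gamma(\Tw K,Z)\simeq\Map_{\Fun(K,\mathscr X)}(F,G)\times_{\Map_{\Fun(K,K)}(\id_K,\id_K)}\{\id_{\id_K}\}.
$$
As $\Gamma(K,\mathscr X)=\Fun(K,\mathscr X)\times_{\Fun(K,K)}\{\id_K\}$ and mapping spaces commute with pullbacks of quasi‑categories, the right‑hand side is $\Map_{\Gamma(K,\mathscr X)}(F,G)$.

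The step I expect to be the real obstacle is the compatibility invoked in the previous paragraph: one must verify that the equivalence of \cite{gepner2017lax}*{Proposition 5.1} between $\Map_{\Fun(K,-)}$ and the associated section space is natural in the target quasi‑category, so that postcomposition with $p$ is genuinely computed on section spaces by the evident map of twisted‑arrow right fibrations, and that under this naturality $Z$ is the fibre over the diagonal section. An alternative that avoids the $\Fun(K,K)$ computation is to describe a section of $q$ directly as a map $s\colon\Tw K\to\Tw\mathscr X$ with $\Tw p\circ s=\id_{\Tw K}$ and $\lambda_{\mathscr X}\circ s=(F\times G^\op)\circ\lambda_K$, and then to match such $s$ with natural transformations $F\to G$ of sections via the description $(\Tw\mathscr X)_n=\mathscr X([n]\star[n]^\op)$; this amounts to carrying out Lurie's proof that $\Tw$ computes mapping spaces relative to $K$, and should rest on the same inner‑anodyne bookkeeping as Lemma \ref{lem:twisted arrow for Fun}.
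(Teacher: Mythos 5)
Your proposal follows essentially the same route as the paper's proof: both identify $\Map_{\Gamma(K,\mathscr X)}(F,G)$ as the fibre of $\Map_{\Fun(K,\mathscr X)}(F,G)\to\Map_{\Fun(K,K)}(\id_K,\id_K)$ over $\id_{\id_K}$, apply the twisted-arrow formula of \cite{gepner2017lax} to convert both functor-category mapping spaces into section spaces of right fibrations over $\Tw K$, and then recognize $\Gamma(\Tw K,Z)$ as the corresponding fibre via Lemma \ref{lem:twisted arrow for Fun}. The only difference is one of rigor rather than strategy: where you assert that mapping spaces commute with the pullback defining $\Gamma(K,\mathscr X)$ and that taking sections preserves the fibre over $\Delta_{\Tw K}$, the paper justifies both strict pullbacks being homotopy pullbacks by exhibiting the relevant comparison maps as right fibrations between Kan complexes (hence Kan fibrations), which is exactly the fibrancy input your argument implicitly needs.
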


\begin{proof}

First, we claim that the induced map $\HomL_{\Fun(K,\mathscr X)}(F,G)\xrightarrow{p_{F,G}^{\mathrm L}}\HomL_{\Fun(K,K)}(\id_K,\id_K)$ is a right fibration. That is, for $0<k\le n$, any lifting problem
\begin{center}

\begin{tikzpicture}[xscale=4,yscale=1.5]
\node (C') at (0,1) {$\Lambda_k^n$};
\node (D') at (1,1) {$\HomL_{\Fun(K,\mathscr X)}(F,G)$};
\node (C) at (0,0) {$\Delta^n$};
\node (D) at (1,0) {$\HomL_{\Fun(K,K)}(\id_K,\id_K)$};
\path[->,font=\scriptsize,>=angle 90]
(C') edge (D')
(C') edge (C)
(D') edge node [right] {$p_{F,G}^{\mathrm L}$} (D)
(C) edge (D);
\end{tikzpicture}

\end{center}
needs to have a solution. But this corresponds to a lifting problem
\begin{center}

\begin{tikzpicture}[xscale=4,yscale=1.5]
\node (C') at (0,1) {$(\Lambda_k^n\star\Delta^0)\cup(\Delta^n\star\emptyset)$};
\node (D') at (1,1) {$\Fun(K,\mathscr X)$};
\node (C) at (0,0) {$\Delta^{n+1}$};
\node (D) at (1,0) {$\Fun(K,K)$.};
\path[->,font=\scriptsize,>=angle 90]
(C') edge (D')
(C') edge (C)
(D') edge node [right] {$p\circ$} (D)
(C) edge (D);
\end{tikzpicture}

\end{center}
As the inclusion $(\Lambda_k^n\star\Delta^0)\cup(\Delta^n\star\emptyset)=\Lambda^{n+1}_k\subset\Delta^{n+1}$ is inner anodyne, the lifting problems have solutions.

Thus, the map $p_{F,G}^{\mathrm L}$ is a right fibration between Kan complexes and therefore a Kan fibration \cite{lurie2009higher}*{Lemma 2.1.3.3${}^\op$}. Therefore the strict Cartesian square
\begin{center}

\begin{tikzpicture}[xscale=4,yscale=1.5]
\node (C') at (0,1) {$\HomL_{\Gamma(K,\mathscr X)}(F,G)$};
\node (D') at (1,1) {$\HomL_{\Fun(K,\mathscr X)}(F,G)$};
\node (C) at (0,0) {$\{\id_{\id_K}\}$};
\node (D) at (1,0) {$\HomL_{\Fun(K,K)}(\id_K,\id_K)$.};
\path[->,font=\scriptsize,>=angle 90]
(C') edge (D')
(C') edge (C)
(D') edge node [right] {$p_{F,G}^{\mathrm L}$} (D)
(C) edge (D);
\end{tikzpicture}

\end{center}
is moreover homotopy Cartesian. We have
$$
\Map_{\Fun(K,\mathscr X)}(F,G)=\lim((\Tw K)^\op\to K^\op\times K\xrightarrow{F^\op\times G}\mathscr X^\op\times\mathscr X\xrightarrow{\Map_{\mathscr X}}\mathscr S)
$$
\cite{gepner2017lax}*{Definition 2.1, Proposition 5.1}. By construction, the presheaf $(\Tw K)^\op\to K^\op\times K\xrightarrow{F^\op\times G}\mathscr X^\op\times\mathscr X\xrightarrow{\Map_{\mathscr X}}\mathscr S$ classifies the right fibration $\Tw\mathscr X\times_{\mathscr X\times\mathscr X^\op}\Tw K\to\Tw K$ where the pullback is taken along the composite $\Tw K\to K\times K^\op\xrightarrow{F\times G^\op}\mathscr X\times\mathscr X^\op$. Therefore, we get
$$
\Map_{\Fun(K,\mathscr X)}(F,G)=\Gamma(\Tw K,\Tw\mathscr X\times_{\mathscr X\times\mathscr X^\op}\Tw K)
$$
\cite{lurie2009higher}*{Corollary 3.3.3.4${}^\op$}.
This combines to produce an equivalence
\[
\HomL_{\Fun(K,\mathscr X)}(F,G)\simeq \Gamma(\Tw K,\Tw\mathscr X\times_{\mathscr X\times\mathscr X^\op}\Tw K).
\]
We may apply the same argument with $p$ replaced with ${\rm id}_K$ to produce an equivalence
\[
\HomL_{\Fun(K,K)}(\id_K,\id_K)\simeq \Gamma(\Tw K,\Tw K\times_{K\times K^\op}\Tw K).
\]
Hence  we have a homotopy pullback diagram
\begin{center}

\begin{tikzpicture}[xscale=6,yscale=1.5]
\node (C') at (0,1) {$\Map_{\Gamma(K,\mathscr X)}(F,G)$};
\node (D') at (1,1) {$\Gamma(\Tw K,\Tw K\times_{\mathscr X\times\mathscr X^\op}\Tw\mathscr X)$};
\node (C) at (0,0) {$\{\Delta_{\Tw K}\}$};
\node (D) at (1,0) {$\Gamma(\Tw K,\Tw K\times_{K\times K^\op}\Tw K)$.};
\path[->,font=\scriptsize,>=angle 90]
(C') edge (D')
(C') edge (C)
(D') edge (D)
(C) edge (D);
\end{tikzpicture}

\end{center}
But we also have the strict pullback diagram
\begin{center}

\begin{tikzpicture}[xscale=6,yscale=1.5]
\node (C') at (0,1) {$\Gamma(\Tw K,Z)$};
\node (D') at (1,1) {$\Gamma(\Tw K,\Tw K\times_{\mathscr X\times\mathscr X^\op}\Tw\mathscr X)$};
\node (C) at (0,0) {$\{\Delta_{\Tw K}\}$};
\node (D) at (1,0) {$\Gamma(\Tw K,\Tw K\times_{K\times K^\op}\Tw K)$.};
\path[->,font=\scriptsize,>=angle 90]
(C') edge (D')
(C') edge (C)
(D') edge (D)
(C) edge (D);
\end{tikzpicture}

\end{center}
It sufficess to show that the map
$$
\Gamma(\Tw K,\Tw K\times_{\mathscr X\times\mathscr X^\op}\Tw\mathscr X)\to\Gamma(\Tw K,\Tw K\times_{K\times K^\op}\Tw K)
$$
is a right fibration. This is because the section categories are Kan complexes by the previous lemma
and \cite[2.1.3.3]{lurie2009higher}.

The fact that it is a right fibration follow from follows from Lemma \ref{lem:twisted arrow for Fun} and that for $0<k\le n$, the inclusion $\Lambda^n_k\times\Tw K\to\Delta^n\times\Tw K$ is right anodyne \cite{lurie2009higher}*{Corollary 2.1.2.7}.

\end{proof}

\begin{cor}\label{cor:contractible fibrewise mapping spaces}

Let $\mathscr X\xrightarrow pK$ be an inner fibration between quasi-categories. Let $F,G\in\Gamma_{\Cart}(K,\mathscr X)$ be two Cartesian sections. Suppose that $\Map_{\mathscr X_k}(F(k),G(k))$ is contractible for all $k\in K$. Then $\Map_{\Gamma(K,\mathscr X)}(F,G)$ is contractible too.

\end{cor}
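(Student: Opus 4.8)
The plan is to deduce the corollary from the homotopy‑limit formula of Proposition~\ref{prop:Map of Gamma}, together with the observation that when $G$ is a Cartesian section every space occurring in that formula is already contractible. By Proposition~\ref{prop:Map of Gamma} we have
$$
\Map_{\Gamma(K,\mathscr X)}(F,G)=\lim\Bigl((\Tw K)^\op\xrightarrow{(m\xrightarrow\alpha n)\mapsto\Map_\alpha(F(m),G(n))}\mathscr S\Bigr),
$$
so it is enough to show that $\Map_\alpha(F(m),G(n))$ is contractible for every edge $\alpha\colon m\to n$ of $K$. Granting this, the diagram above is equivalent in $\Fun((\Tw K)^\op,\mathscr S)$ to the constant diagram at the terminal object $\ast\in\mathscr S$ (terminal objects in functor $\infty$‑categories are computed pointwise), and since $\lim$ is a right adjoint it preserves terminal objects; hence the limit is contractible.

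So fix an edge $\alpha\colon m\to n$ of $K$. Since $G$ is a Cartesian section, $G(\alpha)\colon G(m)\to G(n)$ is a $p$‑Cartesian edge lying over $\alpha$. Applying the characterization of Cartesian edges recalled in \S\ref{ss:Cartesian} (that is, \cite{lurie2009higher}*{Proposition 2.4.4.3}) with $z=F(m)$, the square with horizontal maps
$$
\Hom^R_{\mathscr X}(F(m),G(m))\xrightarrow{G(\alpha)\circ}\Hom^R_{\mathscr X}(F(m),G(n)),\qquad\Hom^R_K(m,m)\xrightarrow{\alpha\circ}\Hom^R_K(m,n)
$$
and vertical maps induced by $p$ is homotopy Cartesian. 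The vertex $\id_m\in\Hom^R_K(m,m)$ is sent by $\alpha\circ$ to a point homotopic to $\alpha\in\Hom^R_K(m,n)$, so passing to homotopy fibres over these vertices yields a homotopy equivalence
$$
\operatorname{hofib}_{\id_m}\bigl(\Hom^R_{\mathscr X}(F(m),G(m))\to\Hom^R_K(m,m)\bigr)\xrightarrow{\ \sim\ }\operatorname{hofib}_\alpha\bigl(\Hom^R_{\mathscr X}(F(m),G(n))\to\Hom^R_K(m,n)\bigr).
$$
By construction the right‑hand homotopy fibre is $\Map_\alpha(F(m),G(n))$, the space of morphisms $F(m)\to G(n)$ lying over $\alpha$; and the left‑hand homotopy fibre is the mapping space $\Map_{\mathscr X_m}(F(m),G(m))$ in the fibre $\mathscr X_m=\{m\}\times_K\mathscr X$, since a morphism lying over the degenerate edge $\id_m$ is the same thing as a morphism of $\mathscr X_m$ (cf.\ \cite{lurie2009higher}*{\S 2.4.4}; one uses here that the relevant maps of Kan complexes may be taken to be Kan fibrations, so that strict fibres compute homotopy fibres). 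By hypothesis $\Map_{\mathscr X_m}(F(m),G(m))$ is contractible, hence so is $\Map_\alpha(F(m),G(n))$, and the proof is complete.

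I expect the only real obstacle to be the identification of the homotopy fibre over $\id_m$ with the fibre mapping space $\Map_{\mathscr X_m}(F(m),G(m))$, which requires a little care about which maps can be arranged to be fibrations; everything else is formal manipulation of the material recalled in \S\ref{ss:Cartesian} and \S\ref{s:twisted}. Note that Cartesianness of $F$ is not used, and that the statement is about $\Map_{\Gamma(K,\mathscr X)}$ rather than $\Map_{\Gamma_{\Cart}(K,\mathscr X)}$, which is immaterial since $\Gamma_{\Cart}(K,\mathscr X)\subseteq\Gamma(K,\mathscr X)$ is full.
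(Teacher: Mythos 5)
Your proof is correct and follows essentially the same route as the paper's: apply the limit formula of Proposition~\ref{prop:Map of Gamma}, use the $p$-Cartesianness of $G(\alpha)$ to identify each $\Map_\alpha(F(m),G(n))$ with the contractible fibrewise mapping space $\Map_{\mathscr X_m}(F(m),G(m))$, and conclude that the limit of a diagram of contractible spaces is contractible. The only difference is that you spell out the justification (via the homotopy-Cartesian square of \cite{lurie2009higher}*{Proposition 2.4.4.3}) for the equivalence that the paper asserts in one line.
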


\begin{proof}

Let $k\xrightarrow e\ell$ be an edge of $K$. By assumption, $G_e$ is a $p$-Cartesian edge. Therefore, postcomposition with it gives an equivalence $\Map_k(F(k),G(k))\to\Map_e(F(k),G(\ell))$. Then by the formula of Proposition \ref{prop:Map of Gamma}, $\Map_{\Gamma(K,\mathscr X)}(F,G)$ is a limit of contractible spaces, thus contractible itself.

\end{proof}

\subsection{Mapping spaces in the Cartesian section quasi-category}

Let $X\to S$ be a coCartesian fibration. In this subsection, $X^\natural$ will denote the marked simplicial set with marked edges the coCartesian edges.

\begin{lem}\label{lem:equivalence iff initial in quasi-category}

Let $k\xrightarrow e\ell$ be an edge in a quasi-category $K$. Then $e$ is an equivalence if and only if it is an initial object of $K_{k/}$.

\end{lem}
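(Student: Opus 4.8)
The plan is to route both implications through a single auxiliary morphism in the undercategory $K_{k/}$. Recall that $e$, regarded as an edge out of $k$, is an object of $K_{k/}$, as is $\id_k$, and that the projection $\pi\colon K_{k/}\to K$ is a left fibration \cite{lurie2009higher}*{\S2.1.2}. Consider the degenerate $2$-simplex $s_0 e\in K_2$; its faces are $d_0(s_0e)=e$, $d_1(s_0e)=e$, and $d_2(s_0e)=\id_k$, so unwinding the definition of $K_{k/}$ via the join $\{k\}\star\Delta^\bullet$, this $2$-simplex is precisely a morphism $u\colon\id_k\to e$ in $K_{k/}$, and $\pi(u)=d_0(s_0e)=e$. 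This is the only construction the argument needs.

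For the direction ``$e$ initial $\Rightarrow$ $e$ equivalence'', I would use that $\id_k$ is an initial object of $K_{k/}$ \cite{lurie2009higher}*{Proposition 1.2.12.4}. If $e$ is also initial, then $u\colon\id_k\to e$ is a morphism between two initial objects, hence an equivalence in $K_{k/}$ (a standard consequence of the contractibility of the mapping spaces $\Map(\id_k,\id_k)$, $\Map(e,e)$, $\Map(\id_k,e)$, $\Map(e,\id_k)$). Applying $\pi$, which like any functor of quasi-categories carries equivalences to equivalences, shows that $e=\pi(u)$ is an equivalence in $K$. For the converse ``$e$ equivalence $\Rightarrow$ $e$ initial'': if $e$ is an equivalence in $K$, then $\pi(u)=e$ is an equivalence, and since $\pi$ is a left fibration it is conservative, so $u\colon\id_k\to e$ is an equivalence in $K_{k/}$; as $\id_k$ is initial in $K_{k/}$ and $u$ exhibits $e$ as equivalent to it, $e$ is itself initial in $K_{k/}$.

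The one genuinely nontrivial external input, and hence the main obstacle, is the conservativity of the left fibration $\pi$: that $\pi(u)$ being an equivalence forces $u$ to be one. I would quote this from the theory of (co)Cartesian edges --- every edge in a left fibration is $\pi$-coCartesian, and a coCartesian edge lying over an equivalence is an equivalence \cite{lurie2009higher}*{\S2.4.1} --- or, if a self-contained argument is preferred, prove it by filling a $\Lambda^2_0$-horn over $\pi$ to produce a left inverse of $u$, then repeating the construction for that left inverse to upgrade it to a two-sided inverse. One should take care here that it is $e$ itself being compared with $\id_k$ (via $u$), not merely the source and target vertices of $e$, and that it is the left fibration structure of $\pi$ --- not just its being an inner fibration --- that licenses lifting the inverse of $\pi(u)$. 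Everything else is bookkeeping with the join description of slices.
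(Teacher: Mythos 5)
Your proof is correct, but it takes a genuinely different route from the paper's. The paper argues purely combinatorially: it identifies $(K_{k/})_{e/}$ with $K_{e/}$, unwinds ``$e$ is (strongly) initial in $K_{k/}$'' into the right lifting property of $K_{e/}\to K_{k/}$ against $\partial\Delta^n\subset\Delta^n$, and recognizes the resulting extension problem --- every $\Lambda^{n+2}_0\to K$ restricting to $e$ on $\Delta^{\{0,1\}}$ extends to $\Delta^{n+2}$ --- as Joyal's lifting-theoretic characterization of equivalences \cite{lurie2009higher}*{Proposition 1.2.4.3}. You instead reduce everything to structural facts about undercategories: the degenerate simplex $s_0e$ correctly yields a morphism $u\colon\id_k\to e$ in $K_{k/}$ with $\pi(u)=e$, and you combine ``$\id_k$ is initial in $K_{k/}$,'' ``a map between initial objects is an equivalence,'' ``left fibrations are conservative,'' and ``initiality is invariant under equivalence.'' All four inputs are true and standard, and you are right to isolate conservativity of $\pi$ as the one substantive input (it follows from the facts that every edge of the source of a left fibration is $\pi$-coCartesian and that a coCartesian edge over an equivalence is an equivalence, or from your two-stage $\Lambda^2_0$-lifting sketch). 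Two caveats: the citation for ``$\id_k$ is initial in $K_{k/}$'' should be double-checked ($1.2.12.4$ is the definition of strongly final, not this statement), and it is worth realizing that the standard proof of that fact is exactly the paper's lifting argument specialized to $e=\id_k$, so your decomposition does not eliminate the combinatorics so much as outsource it to quotable results. The trade-off is clear: the paper's proof is shorter and self-contained modulo one citation of Joyal's criterion, while yours makes the conceptual content --- comparison of $e$ with the canonical initial object $\id_k$ along an explicit degenerate simplex --- visible, at the cost of more external inputs.
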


\begin{proof}
An $n$-simplex of $(K_{k/})_{e/}$ is by definition a morphism
$\Delta^{n+2}=\Delta^0\star\Delta^0\star\Delta^n\rightarrow K$ whose restriction to $\Delta^1= \Delta^0\star \Delta^0$
is $e$.
Hence we have a commutative triangle
\begin{center}

\begin{tikzpicture}[xscale=2,yscale=1.5]
\node (C') at (0,1) {$(K_{k/})_{e/}$};
\node (D') at (1,1) {$K_{e/}$};
\node (C) at (0.5,0) {$K_{k/},$};
\path[->,font=\scriptsize,>=angle 90]
(C') edge node [above] {$\cong$} (D')
(C') edge node [left] {$r$} (C)
(D') edge node [right] {$s$} (C);
\end{tikzpicture}

\end{center}
The vertical maps are obtained by restricting to $\Delta^{[n+2]\setminus\{1\}}$.
  The map $e$ is an initial object of $K_{k/}$ if and only if it is a strongly initial object \cite{lurie2009higher}*{Corollary 1.2.12.5}, that is the map $r$ is a trivial fibration. Unwinding the definitions, the map $s$ has the right lifting property with respect to $\partial\Delta^n\subset\Delta^n$ if and only if every diagram $\Lambda^{n+2}_0\xrightarrow\sigma K$ with $\sigma|\Delta^{[0,1]}=e$ extends to $\Delta^{n+2}$. The latter lifting property is equivalent to $e$ being an equivalence by \cite{lurie2009higher}*{Proposition 1.2.4.3}.

\end{proof}

\begin{lem}\label{lem:fib of sharp is Kan}

Let $X$ be a quasi-category. Let $X^\sharp\xrightarrow q Y^\natural$ be a coCartesian trivial cofibration in $\Set_\Delta^+$. Then $Y$ is a Kan complex.

\end{lem}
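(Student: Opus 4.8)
The plan is to recognise $Y$ as a simplicial localization of $X$ and then argue that any such localization, taken at \emph{all} edges, is automatically a Kan complex.

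First, over a point the coCartesian model structure on $\Set_\Delta^+$ coincides with the Cartesian model structure recalled in \S\ref{ss:Cartesian}, and in $Y^\natural$ the marked edges (the coCartesian ones, which over $\ast$ are just the equivalences) are precisely the equivalences of $Y$. Since $q$ is a trivial cofibration with fibrant target $Y^\natural$, it exhibits $Y^\natural$ as a fibrant replacement of $X^\sharp$; by the definition of simplicial localization in \S\ref{ss:Cartesian} this says exactly that $Y$ is a simplicial localization of $X$ at the collection $\mathscr E$ of \emph{all} edges of $X$, so in particular $Y$ is categorically equivalent to $X[\mathscr E^{-1}]$. As being a Kan complex is invariant under categorical equivalence, it suffices to prove that localizing an arbitrary quasi-category at all of its edges yields a Kan complex.

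For this I would pass to homotopy categories. Being a trivial cofibration, $q$ is a Cartesian equivalence, so by the description in \S\ref{ss:Cartesian} the map $\Map^\flat(Y^\natural,\mathscr Z^\natural)\to\Map^\flat(X^\sharp,\mathscr Z^\natural)$ is a categorical equivalence for every quasi-category $\mathscr Z$. Now $\Map^\flat(Y^\natural,\mathscr Z^\natural)=\Fun(Y,\mathscr Z)$, since every functor carries equivalences to equivalences, while $\Map^\flat(X^\sharp,\mathscr Z^\natural)=\Fun^{\mathscr E}(X,\mathscr Z)$ is the full subcategory of $\Fun(X,\mathscr Z)$ spanned by the functors inverting every edge of $X$. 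Specializing $\mathscr Z$ to the nerve $N\mathbf D$ of an arbitrary small category $\mathbf D$, using the identification $\Fun(K,N\mathbf D)\cong N(\Fun(\tau_1K,\mathbf D))$ together with the fact that $\tau_1$ preserves finite products, and applying $\tau_1$, one obtains a natural equivalence of categories between $\Fun(\tau_1Y,\mathbf D)$ and the category of functors $\tau_1X\to\mathbf D$ inverting every morphism. This is precisely the universal property of the localization of the ordinary category $\tau_1X$ at all of its morphisms; hence $\tau_1Y$ is equivalent to that localization, which is a groupoid. A quasi-category whose homotopy category is a groupoid --- equivalently, each of whose edges is an equivalence (cf.\ Lemma \ref{lem:equivalence iff initial in quasi-category}) --- is a Kan complex, so $Y$ is a Kan complex.

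The step I expect to be the main obstacle is the bookkeeping in the third paragraph: one must check that the paper's ``simplicial localization with respect to $\mathscr E$'' is genuinely characterised by the displayed mapping property, and that applying $\tau_1$ really converts the $\infty$-categorical universal property into the ordinary one, i.e.\ that $\tau_1$ of the $\infty$-categorical localization is the $1$-categorical localization of $\tau_1$. Everything else is formal.
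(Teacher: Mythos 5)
Your proof is correct, but it takes a genuinely different route from the paper's. The paper argues internally with the marked mapping spaces $\Map^\sharp$: since every edge of $X^\sharp$ is marked, $q$ factors through the interior $(Y^\simeq)^\sharp$, and the weak-equivalence property of $q$ (applied twice) produces a map $Y\xrightarrow{f}Y^\simeq$ together with a homotopy $f\sim\id_Y$ in $\Map^\sharp(Y^\natural,Y^\natural)$; the resulting homotopy-commutative naturality square exhibits each edge $e$ of $Y$ as equivalent to $f(e)$, which is an equivalence, so $e$ is one too. You instead recognise $Y$ as the simplicial localization $X[\mathscr E^{-1}]$ at \emph{all} edges, test the universal property against nerves of ordinary categories, and apply $\tau_1$ to identify $\tau_1Y$ (up to equivalence, via the bicategorical Yoneda argument you flag) with the $1$-categorical localization of $\tau_1X$ at all of its morphisms, which is a groupoid. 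Both arguments ultimately invoke Joyal's theorem that a quasi-category all of whose edges are equivalences is a Kan complex. Your version buys a stronger conclusion --- an identification of the homotopy category $\tau_1Y$ as the free groupoid on $\tau_1X$ --- and adapts to localizations at arbitrary marked collections; its cost is the extra input that $\tau_1$ preserves finite products (needed for $\Fun(K,N\mathbf D)\cong N\Fun(\tau_1K,\mathbf D)$) and the mild $2$-categorical bookkeeping to pass from a natural equivalence of functor categories to an equivalence $\tau_1Y\simeq(\tau_1X)[W^{-1}]$, both of which are standard and which you correctly identify as the only points requiring care. The paper's argument is more elementary in that it stays entirely inside the marked model structure and never leaves $Y$ itself.
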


\begin{proof}
It is automatic that $Y$ is a quasi-category hence it suffices to show that all its edges are equvialences.
First note that the coCartesian edges in $Y$ (over a point), are exactly the equivalences.
It follows that $\Map^\sharp(X^\sharp,(Y^\simeq)^\sharp)=\Map^\sharp(X^\sharp,Y^\natural)$. Therefore, since $q\in\Map^\sharp(X^\sharp,Y^\natural)=\Map^\sharp(X^\sharp,(Y^\simeq)^\sharp)$, and the precomposition map $\Map^\sharp(Y^\natural,(Y^\simeq)^\sharp)\xrightarrow{\circ q}\Map^\sharp(X^\sharp,(Y^\simeq)^\sharp)$ is a homotopy equivalence of Kan complexes, there exists $Y\xrightarrow fY^\simeq$ together with a homotopy $fq\sim q$. Since the precomposition map $\Map^\sharp(Y^\natural,Y)\xrightarrow{\circ q}\Map^\sharp(X^\sharp,Y^\natural)$ is a homotopy equivalence, this implies that there exists a homotopy $f\xrightarrow H\id_Y$ in $\Map^\sharp(Y^\natural,Y^\natural)$. But then for all edges $x\xrightarrow ey$ in $Y$, we get a homotopy commutative square in $Y$:
\begin{center}

\begin{tikzpicture}[xscale=2,yscale=1.5]
\node (C') at (0,1) {$x$};
\node (D') at (1,1) {$f(x)$};
\node (C) at (0,0) {$y$};
\node (D) at (1,0) {$f(y)$};
\path[->,font=\scriptsize,>=angle 90]
(C') edge node [above] {$H_x$} node[below] {$\simeq$} (D')
(C') edge node [right] {$e$} (C)
(D') edge node [right] {$f(e)$} node[left] {$\simeq$} (D)
(C) edge node [above] {$H_y$} node[below] {$\simeq$} (D);
\end{tikzpicture}

\end{center}
showing that $e$ is an equivalence.

\end{proof}

\begin{lem}\label{lem:undercat of Kan is contractible}

Let $y\in Y$ be a vertex in a Kan complex. Then the undercategory $Y_{y/}$ is contractible.

\end{lem}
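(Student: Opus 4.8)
The plan is to prove something slightly stronger, namely that the structure map $Y_{y/}\to\Delta^0$ is a trivial fibration of simplicial sets; since a trivial fibration over a point is precisely a contractible Kan complex, this gives the lemma. Concretely, I will verify that $Y_{y/}$ has the right lifting property with respect to every boundary inclusion $\partial\Delta^n\subset\Delta^n$; as these maps generate the cofibrations of $\Set_\Delta$, this is equivalent to $Y_{y/}\to\Delta^0$ being a trivial fibration.

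The computation rests on the join description of the undercategory recalled in \S\ref{ss:quasicategories}: for any simplicial set $S$, a map $S\to Y_{y/}$ is the same datum as a map $\Delta^0\star S\to Y$ whose restriction to $\Delta^0$ is the vertex $y$; in particular an $n$-simplex of $Y_{y/}$ is an $(n+1)$-simplex $\Delta^{n+1}=\Delta^0\star\Delta^n\to Y$ sending the initial vertex to $y$. Under this dictionary, the lifting problem of extending $\partial\Delta^n\to Y_{y/}$ along $\partial\Delta^n\subset\Delta^n$ translates into the problem of extending a map $\Delta^0\star\partial\Delta^n\to Y$ along the inclusion $\Delta^0\star\partial\Delta^n\subset\Delta^0\star\Delta^n=\Delta^{n+1}$. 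The key combinatorial point is that this inclusion is exactly the outer horn inclusion $\Lambda^{n+1}_0\subset\Delta^{n+1}$: a face with vertex set $\rho\subseteq\{0,1,\dots,n+1\}$ lies in $\Delta^0\star\partial\Delta^n$ if and only if $\{1,\dots,n+1\}\not\subseteq\rho$, i.e.\ if and only if $\rho$ is neither the top simplex nor its $0$-th face $d_0\Delta^{n+1}$, which is precisely the condition that the face lie in $\Lambda^{n+1}_0$. (For $n=0$ this degenerates to the assertion that $Y_{y/}$ is nonempty, witnessed by the object $\id_y=s_0y$.)

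Since $Y$ is a Kan complex it admits extensions against all horn inclusions, in particular against the outer horns $\Lambda^{n+1}_0\subset\Delta^{n+1}$, so every lifting problem above is solvable; hence $Y_{y/}\to\Delta^0$ is a trivial fibration and $Y_{y/}$ is a contractible Kan complex. I do not anticipate a genuine obstacle here: the only slightly delicate ingredient is the identification $\Delta^0\star\partial\Delta^n=\Lambda^{n+1}_0$, which is a direct check on faces, and it is worth noting that the Kan hypothesis on $Y$ (as opposed to merely being a quasi-category) is used exactly to fill these outer horns.
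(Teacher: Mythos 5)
Your proof is correct and follows essentially the same route as the paper's: both reduce the lifting problem $\partial\Delta^n\subset\Delta^n$ against $Y_{y/}$ to the outer horn filling problem $\Lambda^{n+1}_0\subset\Delta^{n+1}$ in $Y$ and invoke the Kan condition. Your write-up is merely more explicit about the combinatorial identification $\Delta^0\star\partial\Delta^n=\Lambda^{n+1}_0$, which the paper leaves implicit.
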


\begin{proof}

Consider a lifting problem
\begin{center}

\begin{tikzpicture}[xscale=2,yscale=1.5]
\node (C') at (0,1) {$\partial\Delta^n$};
\node (D') at (1,1) {$Y_{y/}$};
\node (C) at (0,0) {$\Delta^n.$};
\path[->,font=\scriptsize,>=angle 90]
(C') edge (D')
(C') edge (C)
(C) edge [dashed] (D');
\end{tikzpicture}

\end{center}
It corresponds to a lifting problem
\begin{center}

\begin{tikzpicture}[xscale=2,yscale=1.5]
\node (C') at (0,1) {$\Lambda^{n+1}_0$};
\node (D') at (1,1) {$Y$};
\node (C) at (0,0) {$\Delta^{n+1},$};
\path[->,font=\scriptsize,>=angle 90]
(C') edge (D')
(C') edge (C)
(C) edge [dashed] (D');
\end{tikzpicture}

\end{center}
which has a solution as $Y$ is a Kan complex.

\end{proof}

\begin{lem}\label{lem:initial object in simplicial localization}

Let $X$ be a quasi-category. Suppose that it has an initial object $x\in X$. Let $X^\sharp\xrightarrow qY^\natural$ be a coCartesian trivial cofibration in $\Set_\Delta^+$. Then $q(x)\in Y$ is an initial object.

\end{lem}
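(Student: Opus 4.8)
The plan is to show that, under the hypotheses, the Kan complex $Y$ is actually \emph{contractible}; once this is known, every object of $Y$ (in particular $q(x)$) is initial, since in a contractible Kan complex all mapping spaces are contractible. So the argument splits into three independent pieces: ($Y$ is Kan), ($X$ is weakly contractible), and (localizing at all morphisms does not change the weak homotopy type).

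First I would record that $Y$ is a Kan complex by Lemma \ref{lem:fib of sharp is Kan}; hence every edge of $Y$ is an equivalence, so $Y^\natural = Y^\sharp$ and $q\colon X^\sharp\to Y^\natural$ is a trivial cofibration $X^\sharp\to Y^\sharp$, in particular a Cartesian equivalence. Next I would upgrade this to the statement that the underlying map of simplicial sets $X\to Y$ is a weak homotopy equivalence. For this, note that for any quasi-category $Z$ one has a natural identification $\Map^\sharp(W^\sharp,Z^\sharp)\cong\Fun(W,Z)$ (any simplicial map between everywhere-marked objects is admissible), and that $Z^\natural = Z^\sharp$ precisely when $Z$ is a Kan complex; testing the Cartesian equivalence $q$ against Kan complexes $Z$ therefore yields homotopy equivalences $\Fun(Y,Z)\xrightarrow{\circ q}\Fun(X,Z)$ for all Kan $Z$, which is exactly the assertion that $X\to Y$ is a weak homotopy equivalence. (Equivalently: $Y$ is the simplicial localization of $X$ at all its morphisms, which has the homotopy type of $X$.)

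The remaining input is that $X$ is weakly contractible. Here I would use that $x$ being initial in $X$ is the same as $x$ being final in $X^\op$, so the inclusion $\{x\}\hookrightarrow X^\op$ is cofinal (the inclusion of a final object of a quasi-category is cofinal, as recalled in \S\ref{ss:descent}); since a cofinal map induces an equivalence on the colimit of the constant diagram at a point, it is a weak homotopy equivalence, and $X^\op$ and $X$ have the same weak homotopy type, so $X$ is weakly contractible. Combining the two preceding paragraphs, $Y$ is a weakly contractible Kan complex, hence (by Whitehead's theorem for Kan complexes) categorically equivalent to $\Delta^0$; since mapping spaces are invariant under categorical equivalence, $\Map_Y(q(x),y)\simeq\Map_{\Delta^0}(\ast,\ast)\simeq\ast$ for every $y\in Y$, so $q(x)$ is initial. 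Alternatively, once $Y$ is contractible one may instead invoke Lemma \ref{lem:undercat of Kan is contractible} together with the fact that a left fibration with contractible fibres is a trivial fibration to see directly that $Y_{q(x)/}\to Y$ is a trivial fibration, i.e.\ that $q(x)$ is strongly initial.

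I expect the main obstacle to be the passage from ``$q$ is a Cartesian equivalence of marked simplicial sets'' to ``$X\to Y$ is a weak homotopy equivalence of underlying simplicial sets'': the identification of $\Map^\sharp(W^\sharp,Z^\sharp)$ with $\Fun(W,Z)$ and the reduction of the Cartesian-equivalence test to Kan-complex targets is the technical heart of the argument. The weak contractibility of a quasi-category with an initial object, while standard, is the other place requiring a little care about opposites and about precisely what cofinality gives homotopically.
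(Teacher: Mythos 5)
Your proof is correct, but it takes a genuinely different route from the paper's. You establish the stronger statement that $Y$ is a \emph{contractible} Kan complex, by combining (i) $Y$ is Kan (Lemma \ref{lem:fib of sharp is Kan}, which the paper also uses), (ii) the underlying simplicial map $X\to Y$ is a weak homotopy equivalence, obtained by testing the marked equivalence $q$ only against $Z^\natural=Z^\sharp$ for Kan complexes $Z$, identifying $\Map^\sharp(W^\sharp,Z^\sharp)$ with $\Fun(W,Z)$, and invoking the standard detection of weak homotopy equivalences by $[-,Z]$ for Kan $Z$, and (iii) a quasi-category with an initial object is weakly contractible, via cofinality of $\{x\}\hookrightarrow X^{\op}$. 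The paper instead argues directly on the slice: it transports the section of the trivial fibration $X_{x/}\to X$ across $q$ using the extension property of the trivial cofibration against the fibrant object $(Y_{q(x)/})^\natural$, checks that the restriction $Y_{q(x)/}\to Y$ is a homotopy equivalence (using Lemma \ref{lem:undercat of Kan is contractible} to invert one composite), and concludes it is a trivial fibration because it is a left fibration and a weak equivalence. Your version is more conceptual, proves more (every vertex of $Y$ is initial), and in fact delivers directly the contractibility of the fibres $H_m$ that Lemma \ref{lem:nu loc} is really after; the cost is that it imports two standard facts the paper does not recall (the Kan-complex test for weak homotopy equivalences, and weak contractibility of a quasi-category with an initial object), whereas the paper's argument stays entirely within the marked mapping-space machinery it has already set up. Both arguments are sound.
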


\begin{proof}

Since $x\in X$ is initial, it is strongly initial \cite{lurie2009higher}*{Corollary 1.2.15.5}, that is the restriction map $X_{x/}\xrightarrow{r_X}X$ is a trivial fibration. Therefore, it has a section $X\xrightarrow{s_X}X_{x/}$ such that $\id_{X_{x/}}\sim s_Xr_X$. Since the quasi-category $Y$ is a Kan complex by Lemma \ref{lem:fib of sharp is Kan}, so is the undercategory $Y_{q(x)/}$. Therefore, the induced map $X_{x/}\xrightarrow{q_{x/}}Y_{q(x)/}$ takes all edges into equivalences, and therefore it induces a map $(X_{x/})^\sharp\xrightarrow{q_{x/}}(Y_{q(x)/})^\natural$. Therefore, as the postcomposition map $\Map^\sharp(Y^\natural,(Y_{q(x)/})^\natural)\xrightarrow{\circ q}\Map^\sharp(X^\sharp,(Y_{q(x)/})^\natural)$ is a homotopy equivalence, there exists a map $Y\xrightarrow{s_Y}Y_{q(x)/}$ and a homotopy $s_Yq\sim q_{x/}(s_X)^\sharp$. Since we have
$$
r_Ys_Yq\sim r_Y q_{x/}(s_X)^\sharp=q(r_X)^\sharp(s_X)^\sharp=q,
$$
we get a homotopy $r_Ys_Y\sim\id_Y$. Moreover, as the space $Y_{q(x)/}$ is contractible by Lemma \ref{lem:undercat of Kan is contractible}, the canonical map $Y_{q(x)/}\xrightarrow p\ast$ has a section $\ast\xrightarrow iY_{q(x)/}$ together with a homotopy $\id_{Y_{q(x)/}}\sim ip$. Therefore, we have a homotopy
$$
s_Yr_Y\sim ips_Yr_Y=ip\sim\id_{Y_{q(x)/}}.
$$
Therefore, $r_Y$ is a homotopy equivalence, and thus it is a weak equivalence. Since it is moreover a left fibration, it is a trivial fibration. This shows that $q(x)\in Y$ is strongly initial, and thus it is an initial object.

\end{proof}

\begin{lem}\label{lem:nu loc}

Let $K$ be a quasi-category. Let $\Tilde\nu$ denote the collection of edges of $(\Tw K)^\op$ given by diagrams in $K$ of the form
\begin{center}

\begin{tikzpicture}[xscale=2,yscale=1.5]
\node (C') at (0,1) {$m$};
\node (D') at (1,1) {$m$};
\node (C) at (0,0) {$n'$};
\node (D) at (1,0) {$n$.};
\path[->,font=\scriptsize,>=angle 90]
(C') edge node [above] {$\id_m$} (D')
(C') edge node [right] {$\alpha'$} (C)
(D') edge node [right] {$\alpha$} (D)
(D) edge node [above] {$\nu$} (C);
\end{tikzpicture}

\end{center}
Let $((\Tw K)^\op,\Tilde\nu)\xrightarrow qH$ be a coCartesian trivial cofibration in $(\Set_\Delta^+)_{/K^\op}$ with source the restriction map $((\Tw K)^\op,\Tilde\nu)\xrightarrow r (K^\op)^\sharp$ and target a coCartesian fibration $H^\natural\xrightarrow p(K^\op)^\sharp$. Then $H^\natural\xrightarrow p (K^\op)^\sharp$ is a trivial coCartesian fibration.

\end{lem}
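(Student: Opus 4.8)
The strategy is to reduce the statement to a computation on each fibre and then feed that into Lemmas \ref{lem:fib of sharp is Kan}, \ref{lem:initial object in simplicial localization} and \ref{lem:undercat of Kan is contractible}.

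First I would unwind the underlying map $(\Tw K)^\op\to K^\op$: it is the opposite of the source projection $\Tw K\to K$, hence a coCartesian fibration, and a direct inspection of the definition of $\Tw$ identifies its fibre over a vertex $m$ with the undercategory $K_{m/}$. Indeed, a $1$-simplex of $(\Tw K)^\op$ lies over the degenerate edge $\id_m$ of $K^\op$ precisely when its ``$\mu$-component'' is $\id_m$, i.e.\ precisely when it is one of the edges comprising $\Tilde\nu$; and the full collection of simplices of $(\Tw K)^\op$ lying over $m$ is exactly $K_{m/}$. Thus $\Tilde\nu$ is the set of all edges of $(\Tw K)^\op$ lying over degenerate edges of $K^\op$, so the marking that $((\Tw K)^\op,\Tilde\nu)$ induces on the fibre $K_{m/}$ is the maximal one, $(K_{m/})^\sharp$. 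Since $p\colon H^\natural\to(K^\op)^\sharp$ is assumed to be a coCartesian fibration, it will be a trivial coCartesian fibration as soon as every fibre $H_m$ is a contractible Kan complex: a coCartesian fibration with contractible fibres is a categorical fibration which is moreover a categorical equivalence, hence a trivial Kan fibration. So it suffices to prove $H_m\simeq\ast$ for each vertex $m$ of $K^\op$.

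For this I would pull the coCartesian trivial cofibration $q$ back along the inclusion $\{m\}\hookrightarrow K^\op$. Restriction along a vertex inclusion preserves cofibrations and coCartesian equivalences (compatibility of the coCartesian model structures with base change, \cite{lurie2009higher}), and the induced marking on $H_m$ is the collection of equivalences; hence one gets a coCartesian trivial cofibration $(K_{m/})^\sharp\to(H_m)^\natural$ in $\Set_\Delta^+$. Lemma \ref{lem:fib of sharp is Kan} then gives that $H_m$ is a Kan complex. Since $\id_m$ is an equivalence in $K$, Lemma \ref{lem:equivalence iff initial in quasi-category} shows $\id_m$ is an initial object of $K_{m/}$, so Lemma \ref{lem:initial object in simplicial localization} shows its image $q(\id_m)$ is an initial object of $H_m$. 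An initial object of a Kan complex is strongly initial, so $(H_m)_{q(\id_m)/}\to H_m$ is a trivial fibration, while $(H_m)_{q(\id_m)/}$ is contractible by Lemma \ref{lem:undercat of Kan is contractible}. Therefore $H_m$ is contractible, completing the proof.

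The part requiring the most care is the first paragraph: one must pin down the conventions for $\Tw$ (which projection, and on which side the $\op$ sits) in order to be sure the fibre over $m$ is $K_{m/}$ with its \emph{initial} object $\id_m$, rather than the opposite category; and one must check that $\Tilde\nu$ is exactly the set of vertical edges, so that the fibrewise marking is $(K_{m/})^\sharp$ --- the precise hypothesis that Lemmas \ref{lem:fib of sharp is Kan} and \ref{lem:initial object in simplicial localization} consume. Granting those identifications and the base-change compatibility, the remainder is a formal concatenation of the four preceding lemmas.
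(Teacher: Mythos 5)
Your proof is correct and follows essentially the same route as the paper's: reduce to contractibility of the fibres $H_m$, identify the fibre of $((\Tw K)^\op,\Tilde\nu)\to(K^\op)^\sharp$ over $m$ with $(K_{m/})^\sharp$, and then chain Lemmas \ref{lem:equivalence iff initial in quasi-category}, \ref{lem:initial object in simplicial localization}, \ref{lem:fib of sharp is Kan} and \ref{lem:undercat of Kan is contractible} to conclude that $H_m$ is a contractible Kan complex. The only cosmetic difference is that you spell out the base-change step (restriction of the trivial cofibration to the fibre) and the identification of $\Tilde\nu$ with the vertical edges, which the paper asserts as being true ``by construction.''
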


\begin{proof}

It will be enough to show that the fibres of $H\xrightarrow pK^\op$ are contractible \cite{lurie2009higher}*{Proposition 3.1.3.5}. Fix $m\in K^\op$. Note that the fibre $r^{-1}(m)=((\Tw K)^\op,\Tilde\nu)_m=(K_{m/})^\sharp$ by construction. The fibre $(K_{m/})^\sharp\xrightarrow{q_m}H_m$ is a trivial coCartesian cofibration in $\Set_\Delta^+$. By Lemma \ref{lem:equivalence iff initial in quasi-category}, $\id_m\in K_{m/}$ is an initial object. Then by Lemma \ref{lem:initial object in simplicial localization}, $H_m$ has an initial object $y\in H_m$. That is, the restriction map $(H_m)_{y/}\to H_m$ is a trivial fibration. But by Lemma \ref{lem:fib of sharp is Kan}, $H_m$ is a Kan complex. Therefore, so is $(H_m)_{y/}$. Then by Lemma \ref{lem:undercat of Kan is contractible}, $(H_m)_{y/}$ is contractible. Therefore, $H_m$ is contractible as claimed.

\end{proof}

\begin{lem}\label{lem:Cart factors through nu}

Let $\mathscr X\xrightarrow pK$ be an inner fibration over a quasi-category. Let $F,G\in\Gamma(K,\mathscr X)$ be two sections. Suppose that $G$ is a Cartesian section. Then the map $(\Tw K)^\op\xrightarrow{f:\alpha\mapsto\Map_\alpha(F_m,G_n)}\mathscr S$ takes a diagram $\nu'\in\Tilde\nu$:
\begin{center}

\begin{tikzpicture}[xscale=2,yscale=1.5]
\node (C') at (0,1) {$m$};
\node (D') at (1,1) {$m$};
\node (C) at (0,0) {$n'$};
\node (D) at (1,0) {$n$.};
\path[->,font=\scriptsize,>=angle 90]
(C') edge node [above] {$\id_m$} (D')
(C') edge node [right] {$\alpha'$} (C)
(D') edge node [right] {$\alpha$} (D)
(D) edge node [above] {$\nu$} (C);
\end{tikzpicture}

\end{center}
to an equivalence.

\end{lem}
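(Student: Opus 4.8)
The plan is to unwind the functor $f$ via Proposition~\ref{prop:Map of Gamma} and then to reduce to the characterisation of $p$-Cartesian edges recalled in \S\ref{ss:Cartesian}. Regard $\nu'$ as the morphism of $\Tw K$ from $\alpha'\colon m\to n'$ to $\alpha\colon m\to n$ given by the displayed square; commutativity of that square gives $\alpha'=\nu\circ\alpha$. By Proposition~\ref{prop:Map of Gamma}, $f$ is precisely the diagram classifying the right fibration $q\colon Z\to\Tw K$ constructed there, so that $f(\alpha)=Z_\alpha=\Map_\alpha(F(m),G(n))$, $f(\alpha')=Z_{\alpha'}=\Map_{\alpha'}(F(m),G(n'))$, and $f(\nu')$ is the fibre-transport map $Z_\alpha\to Z_{\alpha'}$ of $q$ determined by $\nu'$.

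The first step is to compute this fibre-transport map explicitly. Recall that $Z$ is the pullback of $\Tw K\times_{\mathscr X\times\mathscr X^\op}\Tw\mathscr X\to\Tw K\times_{K\times K^\op}\Tw K$ along the diagonal of $\Tw K\times_{K\times K^\op}\Tw K$, and that $\Tw\mathscr X\to\mathscr X\times\mathscr X^\op$ is the right fibration classified by $\Map_{\mathscr X}$. Chasing these fibre products and the functoriality of the twisted arrow construction applied to $p$, a lift of $\nu'$ against $q$ with prescribed target a morphism $\phi\colon F(m)\to G(n)$ over $\alpha$ is a $3$-simplex of $\mathscr X$ exhibiting a commutative square with top edge $\id_{F(m)}$, bottom edge $G(\nu)$, right edge $\phi$, and left edge $G(\nu)\circ\phi$; its source is therefore $G(\nu)\circ\phi$. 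Hence $f(\nu')$ is post-composition with $G(\nu)\colon G(n)\to G(n')$, restricted to the space of morphisms $F(m)\to G(n)$ lying over $\alpha$, which it carries into the space of morphisms $F(m)\to G(n')$ lying over $\nu\circ\alpha=\alpha'$.

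Now I would bring in the hypothesis. Since $G$ is a Cartesian section, $G(\nu)$ is a $p$-Cartesian edge, so by \cite{lurie2009higher}*{Proposition~2.4.4.3}, applied with $z=F(m)$, the square with horizontal maps
$$\Hom^R_{\mathscr X}(F(m),G(n))\xrightarrow{\ G(\nu)\circ\ }\Hom^R_{\mathscr X}(F(m),G(n'))\quad\text{and}\quad\Hom^R_K(m,n)\xrightarrow{\ \nu\circ\ }\Hom^R_K(m,n')$$
and with vertical maps induced by $p$ is homotopy Cartesian. Taking homotopy fibres over the vertex $\alpha\in\Hom^R_K(m,n)$ and over its image $\nu\circ\alpha=\alpha'\in\Hom^R_K(m,n')$, the induced map on homotopy fibres is then a homotopy equivalence. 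But that induced map is exactly the restricted post-composition map of the previous paragraph, and — as one checks from the construction of $q$ — the spaces $Z_\alpha=\Map_\alpha(F(m),G(n))$ and $Z_{\alpha'}=\Map_{\alpha'}(F(m),G(n'))$, being fibres of the right fibration $q$, model these homotopy fibres. Hence $f(\nu')$ is a homotopy equivalence, as desired.

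The main obstacle is the bookkeeping in the second paragraph: one has to chase the two nested fibre products defining $Z$, together with the functoriality of $\Tw(-)$ applied to $p$, in order to identify the fibre transport of $q$ along a member of $\Tilde\nu$ with post-composition by $G(\nu)$, and to match the twisted-arrow model of the mapping space used in Proposition~\ref{prop:Map of Gamma} with the $\Hom^R$-model of \cite{lurie2009higher}*{Proposition~2.4.4.3}. This is routine but fiddly; all of the homotopy-theoretic content sits in the $p$-Cartesian hypothesis, via the homotopy Cartesian square.
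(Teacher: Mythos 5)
Your proof is correct and takes essentially the same route as the paper: both identify $f(\nu')$, via the right fibration $Z\to\Tw K$ classified by $f$, with postcomposition by $G(\nu)$ restricted to the mapping spaces over $\alpha$ and $\alpha'=\nu\circ\alpha$, and then conclude from the $p$-Cartesianness of $G(\nu)$ (the paper states this last step tersely, while you make it explicit by taking homotopy fibres in the homotopy Cartesian square of \cite{lurie2009higher}*{Proposition 2.4.4.3}).
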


\begin{proof}

As shown in the Proof of Proposition \ref{prop:Map of Gamma}, the map $f$ classifies the right fibration $Z\to\Tw K$ that is the pullback of $\Tw K\times_{\mathscr X\times\mathscr X^\op}\Tw\mathscr X\xrightarrow q\Tw K\times_{K\times K^\op}\Tw K$ along $\Tw K\xrightarrow\Delta\Tw K\times_{K\times K^\op}\Tw K$, where $q$ itself is the pullback of $\Tw\mathscr X\xrightarrow\lambda\mathscr X\times\mathscr X^\op$ along $\Tw K\xrightarrow\lambda K\times K^\op\xrightarrow{(F,G^\op)}\mathscr X\times\mathscr X^\op$. Since $\lambda$ corresponds to $\mathscr X^\op\xrightarrow{h^\bullet}\Fun(\mathscr X,\mathscr S)$ \cite{lurie2014higher}*{Proposition 5.2.1.11}, the map $(\Tw K)^\op\xrightarrow{\alpha\mapsto\Map_{\mathscr X}(F_m,G_n)}\mathscr S$ takes $\nu'$ to the postcomposition map $\Map_{\mathscr X}(F_m,G_n)\xrightarrow{G_\nu\circ}\Map_{\mathscr X}(F_m,G_{n'})$. Therefore, its restriction $f$ takes $\nu'$ to the postcomposition map $\Map_{\alpha}(F_m,G_n)\xrightarrow{G_\nu\circ}\Map_{\alpha\circ\nu}(F_m,G_{n'})$, which is an equivalence as $G$ is a Cartesian fibration.

\end{proof}

\begin{prop}\label{prop:Map of Gamma_Cart}

Let $\mathscr X\xrightarrow p K$ be an inner fibration over a quasi-category. Let $F,G\in\Gamma_{\Cart}(K,\mathscr X)$ be two Cartesian sections. Then we have
$$
\Map_{\Gamma_{\Cart}(K,\mathscr X)}(F,G)\simeq\lim(K^\op\xrightarrow{m\mapsto\Map_m(F_m,G_m)}\mathscr S).
$$

\end{prop}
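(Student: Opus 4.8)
The plan is to reduce the statement to Proposition \ref{prop:Map of Gamma} and then collapse the twisted‑arrow indexing category down to $K^\op$. First I would note that $\Gamma_{\Cart}(K,\mathscr X)\subseteq\Gamma(K,\mathscr X)$ is a full subcategory, so the inclusion induces an equivalence $\Map_{\Gamma_{\Cart}(K,\mathscr X)}(F,G)\simeq\Map_{\Gamma(K,\mathscr X)}(F,G)$; by Proposition \ref{prop:Map of Gamma} the right‑hand side is $\lim f$, where $f\colon(\Tw K)^\op\to\mathscr S$ sends $m\xrightarrow\alpha n$ to $\Map_\alpha(F_m,G_n)$. So it is enough to identify $\lim f$ with $\lim g$, where $g\colon K^\op\to\mathscr S$ sends $m$ to $\Map_m(F_m,G_m)=\Map_{\mathscr X_m}(F_m,G_m)$. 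I would do this by exhibiting $f$ as the composite of the ``source'' restriction functor $r\colon(\Tw K)^\op\to K^\op$ with a functor equivalent to $g$, and by showing that $r$ is coinitial.

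For the first part: by Lemma \ref{lem:Cart factors through nu}, applied to the Cartesian section $G$, the functor $f$ carries every edge in the class $\Tilde\nu$ to an equivalence in $\mathscr S$. On the other hand, Lemma \ref{lem:nu loc} exhibits $r$, marked by $\Tilde\nu$, as a fibrewise localization over $K^\op$ whose total space is the trivial coCartesian fibration $\id_{K^\op}$; in particular the fibre of $r$ over $m$ is the undercategory $K_{m/}$, with $\Tilde\nu$ restricting to all of its edges. Combining these, I would conclude that $f$ descends along $r$, obtaining a functor $\Bar f\colon K^\op\to\mathscr S$ together with an equivalence $f\simeq\Bar f\circ r$. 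Evaluating at the object $\id_m\in(\Tw K)^\op$, which $r$ carries to $m$, identifies $\Bar f(m)\simeq f(\id_m)=\Map_{\id_m}(F_m,G_m)=\Map_{\mathscr X_m}(F_m,G_m)=g(m)$, so $\Bar f\simeq g$.

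It then remains to check that the canonical comparison $\lim\Bar f\to\lim(\Bar f\circ r)$ is an equivalence, i.e.\ that $r$ is coinitial. Here I would use that $r$ is the opposite of the source projection $\Tw K\to K$, which is a Cartesian fibration (the Cartesian lift of $a'\to a$ ending at $a\xrightarrow\alpha b$ is the precomposition $a'\to a\xrightarrow\alpha b$), so $r$ is a coCartesian fibration; since coCartesian fibrations are smooth, for each $m$ the simplicial sets $(\Tw K)^\op\times_{K^\op}(K^\op)_{m/}$ and $(\Tw K)^\op\times_{K^\op}(K^\op)_{/m}$ are total spaces of coCartesian fibrations over the weakly contractible quasi‑categories $(K^\op)_{m/}$, $(K^\op)_{/m}$, with fibres $K_{m/}$ (weakly contractible, as $\id_m$ is initial), hence are weakly contractible. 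By \cite{lurie2009higher}*{Theorem 4.1.3.1} and its dual, $r$ is then both cofinal and coinitial, and the chain $\Map_{\Gamma_{\Cart}(K,\mathscr X)}(F,G)\simeq\lim f=\lim(\Bar f\circ r)\simeq\lim\Bar f=\lim g$ gives the claim.

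The step I expect to be delicate is the descent $f\simeq\Bar f\circ r$: Lemma \ref{lem:Cart factors through nu} only says $f$ inverts $\Tilde\nu$, while Lemma \ref{lem:nu loc} produces the \emph{fibrewise} localization, so I would need to argue — working with the contravariant and coCartesian model structures on $(\Set_\Delta)_{/K^\op}$ and $(\Set_\Delta^+)_{/K^\op}$, in the spirit of the surrounding lemmas — that an $\mathscr S$‑valued functor inverting $\Tilde\nu$ genuinely descends along $r$; this is where the triviality of the fibrewise localization (each fibre $K_{m/}$ being already weakly contractible) is essential, so that the fibrewise localization agrees with the relevant absolute one. The remaining points — fullness of $\Gamma_{\Cart}$, the identification $f(\id_m)=\Map_{\mathscr X_m}(F_m,G_m)$, and the cofinality bookkeeping — are routine.
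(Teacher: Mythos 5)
Your proposal follows essentially the same route as the paper: reduce to $\Map_{\Gamma(K,\mathscr X)}(F,G)$ by fullness of $\Gamma_{\Cart}(K,\mathscr X)\subset\Gamma(K,\mathscr X)$, apply Proposition \ref{prop:Map of Gamma}, then use Lemmas \ref{lem:Cart factors through nu} and \ref{lem:nu loc} to collapse the twisted-arrow diagram to $K^\op$ and identify the value at $\id_m$ with $\Map_{\mathscr X_m}(F_m,G_m)$. The only divergence is the last step: the paper obtains both the descent $f\simeq\Bar f\circ r$ (the point you rightly flag as delicate) and the limit comparison by factoring through the fibrewise localization $H$ of Lemma \ref{lem:nu loc} and using that the maps $(\Tw K)^\op\to H$ and $K^\op\to H$ are left anodyne, hence final, whereas you propose to verify coinitiality of $r\colon(\Tw K)^\op\to K^\op$ directly via Quillen's Theorem A and smoothness — a workable variant of the same argument.
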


\begin{proof}

Consider the diagram

\begin{center}

\begin{tikzpicture}[xscale=3,yscale=3]
\node (C') at (0,0) {$(\Tw K)^\op$};
\node (D') at (1,0) {$((\Tw K)^\op,\Tilde\nu)$};
\node (E') at (2,0) {$\mathscr S$};
\node (C) at (.5,-.5) {$H$};
\node (D) at (1,-1) {$K^\op.$};
\node (E) at (2,-1) {$\mathscr S\times K^\op$};
\path[->,font=\scriptsize,>=angle 90]
(C') edge node [above] {$i$} (D')
(D') edge node [above] {$f'$} (E')
(C') edge [bend left] node [above] {$f\colon(m\xrightarrow{\alpha}n)\mapsto\Map_{\alpha}(F_m,G_n)$} (E')
(D') edge node [left] {$q$} (C)
(C') edge node [below] {$qi$} (C)
(D) edge [bend left] node [left] {$t$} (C)
(D') edge node [right] {$(f',r)$} (E)
(C) edge node [above] {$(g',p)$} (E)
(D) edge node [above] {$(g,\id)$} (E)
(E) edge node [right] {$\pr$} (E')
(C) edge node [right] {$p$} (D);
\end{tikzpicture}

\end{center}
Here, $i$ is the map given by the identity, and $q$ is a coCartesian trivial cofibration in $(\Set_\Delta^+)_{/K^\op}$ between the restriction map $((\Tw K)^\op,\Tilde\nu)\xrightarrow r(K^\op)^\sharp$ and a coCartesian fibration $H\xrightarrow p(K^\op)^\sharp$. Since by Lemma \ref{lem:Cart factors through nu}, $f$ takes the $\nu'\in\Tilde\nu$ to equivalences, we have $f=if'$. Since $p$ is a coCartesian trivial fibration by Lemma \ref{lem:nu loc}, it has a section $t$. This shows that $t$ is co-marked anodyne, and thus left anodyne. We claim that $qi$ is also left anodyne. Let $X\xrightarrow {\pi}S$ be a left fibration. Then it is a coCartesian fibration. Therefore, $X^\sharp=X^\natural\xrightarrow{\pi^\sharp} S^\sharp$ is a coCartesian fibration. Consider a lifting diagram
\begin{center}

\begin{tikzpicture}[xscale=2,yscale=3]
\node (C') at (0,1) {$(\Tw K)^\op$};
\node (D') at (1,1) {$X^\sharp$};
\node (E') at (0,.5) {$(\Tw K)^\op[\Tilde\nu]$};
\node (C) at (0,0) {$H$};
\node (D) at (1,0) {$S^\sharp$.};
\path[->,font=\scriptsize,>=angle 90]
(C') edge node [above] {$a$} (D')
(E') edge [dashed] node [above] {$a'$} (D')
(E') edge node [right] {$q$} (C)
(C') edge node [right] {$i$} (E')
(D') edge node [right] {$\pi^\sharp$} (D)
(C) edge [dashed] node [right] {$c$} (D')
(C) edge node [above] {$b$} (D);
\end{tikzpicture}

\end{center}
Since every edge of $X^\natural$ is marked, $a$ gives $a'$. Since $q$ is co-marked anodyne, we get $c$ such that $cq=a'$ and $\pi^\sharp c=b$. But then $cqi=a$ and $\pi^\sharp c=b$ shows that $qi$ has the left lifting property with respect to $\pi$.

Since the precomposition map $\Map^\sharp(H^\natural,(\mathscr S\times K^\op)^\natural)\xrightarrow{\circ q}\Map^\sharp(((\Tw K)^\op,\Tilde\nu),(\mathscr S\times K^\op)^\natural)$ is a homotopy equivalence, there exists a map $H\xrightarrow{(g',p)}\mathscr S\times K^\op$ together with a homotopy $(g',p)q\sim(f',r)$. Let $g=g't$. We know that a map is left anodyne if and only if it is final \cite{lurie2009higher}*{Proposition 4.1.1.3 (4)}. That is, the maps $qi$ and $t$ are final. Then, using Proposition \ref{prop:Map of Gamma}, we get
$$
\Map_{\Gamma(K,\mathscr X)}(F,G)=\lim(f)=\lim(f'i)=\lim(g'qi)=\lim(g')=\lim(g).
$$

\end{proof}

\begin{rem}

Let $m\in K^\op$. Then the homotopy $(f',\pr)\sim(g',p)q$ gives a homotopy $f'\sim g'q$. Moreover, as $t$ is a section of the coCartesian trivial fibration $p$, we get a homotopy $pt\sim\id_H$. Therefore, from $m=r(\id_m)=p(q(\id_m))$, we get a homotopy $t(m)\sim q(\id_m)$. This gives
$$
g(m)=g'(t(m))\simeq g'(q(\id_m))\simeq f'(\id_m)=\Map_m(F_m,G_m).
$$

\end{rem}

%%%%%%%%%%%%%%% The fibration %%%%%%%%%%%%%%%%%%%%%%%

\section{The construction of ${}^\op\mathscr D_S$} \label{s:fibration}

In this section, we relativize the dg-nerve construction \cite{lurie2014higher}*{Construction 1.3.1.6} to give a presentable fibration over $\Sch_S$, which is classified by the functor $\Sch_S^\op\xrightarrow{T\mapsto\mathscr D(T)^\op}\Cat_\infty$.

\begin{cons}\label{cons:fibration}

Let ${}^\op\mathscr D_S$ denote the simplicial set with $n$-simplices tuples $(\sigma,(K_i)_{i\in[n]},(f_I)_{I\subseteq[n]})$, where

\begin{enumerate}

\item $\sigma=(T_i,t_{ij})$ is an $n$-simplex in the nerve of the category $\Sch_S$. That is, for each $0\le i\le n$, $T_i$ is an $S$-scheme, and for each $0\le i<j\le n$, $t_{ij}$ is a morphism of $S$-schemes $T_i\to T_j$ such that for each $0\le i<j<k\le n$, we have $t_{ij}=t_{jk}t_{ij}$.

\item For each $0\le i\le n$, $K_i$ is a complex of injective $\mathscr O_{T_i}$-modules with quasi-coherent cohomology sheaves.

\item For each $I=\{i_-<i_1<\dotsb<i_m<i_+\}\subseteq[n]$ with $m\ge0$, we have $f_I\in\RHom_{T_{i_-}}^m(Lt_{i_-i_+}^*K_{i_+},K_{i_-})$ such that
$$
df_I=\sum_{1\le j\le m}(-1)^j(f_{I-\{i_j\}}\circ(t_{i_-i_+}^*r_{Lt_{i_ji_+}^*K_{i_+}})-f_{\{i_-,\dotsc,i_j\}}\circ(Lt_{i_-i_j}^*f_{\{i_j,\dotsc,i_+\}})).
$$

\end{enumerate}

Let $[m]\xrightarrow\alpha[n]$ be a morphism in $\Delta$. Then the corresponding map $({}^\op\mathscr D_S)_n\xrightarrow{\alpha^*}({}^\op\mathscr D_S)_m$ is defined as
$$
(\sigma,(K_i)_{i\in[n]},(f_I)_{I\subseteq[n]})\mapsto(\sigma\circ\alpha,(K_{\alpha(j)})_{j\in[m]},(g_J)_{J\subseteq[m]}),
$$
where
$$
g_J=\begin{cases}
f_{\alpha(J)} & \alpha|J\text{ is injective,}\\
\id_{I_i} & |J|=2\text{ and }\alpha(J)=\{i\},\\
0 & \text{else.}
\end{cases}
$$
We need to check that this makes sense, in other words the third condition
in the constructions holds. To see this, observe that any $\alpha$ can be factored
into coface and codegeneracy maps, hence it suffices to check the condition for
those. In the case, of a coface map, we will always be in the situation where
$\alpha|J$ is injective, hence the condition for $g_J$ boils down to the same condition
for $f_J$.

Now consider the case where $\alpha : [n+1]\rightarrow [n]$ is a degeneracy
with $\alpha(i)=\alpha(i+1)$. We may assume that both $i,i+1\in J$ otherwise
we will be in the injective situation. Then $g_J=0$. In the sum, all terms of the form
\[
g_{\{i_-,\dotsc,i_j\}}\circ(Lt_{i_-i_j}^*g_{\{i_j,\dotsc,i_+\}})
\]
will vanish due to non-injectivity of the restricted morphism.
Exactly two of the terms $g_{J-{i_j}}$ will not vanish but will occur with opposite sign.

Let ${}^\op\mathscr D_S\xrightarrow p\Sch_S$ denote the forgetful map.

\end{cons}

\begin{rem}

We put the op in the notation to avoid confusion as the fibres ${}^\op\mathscr D(T)$ are indeed the opposite categories of the derived category $\mathscr D(T)$. To see this, note that the dg-nerve functor \cite{lurie2014higher}*{Proposition 1.3.1.20} commutes with opposites. We will write $\mathscr D_S=({}^\op\mathscr D_S)^\op$.

\end{rem}

\begin{notn}\label{n:truncation}
	For each integer $n$ we can consider
	 full subcategories ${}^\op\msD^{\le n}_S$ (resp. ${}^\op\msD^{\ge n}_S$) of
	 ${}^\op\msD_S$
	 on complexes with cohomology in degrees at most (resp. at least) $n$.
\end{notn}

Given a simplicial set $S$ and a pair of $0$-simplicies $s,t\in S_0$ recall the
definition of the right mapping space $\Hom^R(s,t)$ from
\cite[page 27]{lurie2009higher}

\begin{prop}\label{prop:Map in D}

Let $(U,J),(T,I)\in{}^\op\mathscr D_S$ be $0$-simplices. Then we have $$\Hom^R_{{}^\op\mathscr D_S}((U,J),(T,I))\cong\bigsqcup_{U\xrightarrow gT}\DK\tau_{\le0}\RHom_U(Lg^*I,J).$$

\end{prop}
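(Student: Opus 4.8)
The plan is to first split off the morphism of schemes, using that the base is an ordinary category, and then to match $n$-simplices directly via the Koszul complex description of $\DK$ established above.

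First I would observe that the forgetful map $p\colon{}^\op\mathscr D_S\to N(\Sch_S)$ induces a map of Kan complexes $\Hom^R_{{}^\op\mathscr D_S}((U,J),(T,I))\to\Hom^R_{N(\Sch_S)}(U,T)$. Since $N(\Sch_S)$ is the nerve of a $1$-category, $\Hom^R_{N(\Sch_S)}(U,T)$ is the constant simplicial set on the discrete set $\Hom_{\Sch_S}(U,T)$: an $(n+1)$-simplex of $N(\Sch_S)$ that is constant at $U$ on $\{0,\dots,n\}$ and equals $T$ at $n+1$ is exactly a morphism $U\to T$, and the simplicial operators act trivially. A simplicial set over a discrete set is the coproduct of the preimages of the points, so $\Hom^R_{{}^\op\mathscr D_S}((U,J),(T,I))=\bigsqcup_{g\colon U\to T}M_g$, where $M_g$ denotes the preimage of the constant simplex at $g$. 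This produces the coproduct on the right-hand side and reduces the statement to a natural isomorphism $M_g\cong\DK\tau_{\le0}\RHom_U(Lg^*I,J)$ for each $g$.

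Next, fix $g$ and unwind \ref{cons:fibration}. An $n$-simplex of $M_g$ is a tuple $(f_L)_{L\subseteq[n+1]}$ whose scheme part is constant at $U$ on $\{0,\dots,n\}$ with $n\to n+1$ the map $g$, whose complexes are $K_i=J$ for $i\le n$ and $K_{n+1}=I$, whose components $f_L$ for $L\subseteq\{0,\dots,n\}$ are the forced degenerate data (the canonical resolution map when $|L|=2$, zero when $|L|\ge 3$), and whose components $f_L$ for $n+1\in L$ lie in $\RHom_U(Lg^*I,J)$ in the degree prescribed by \ref{cons:fibration}, subject to the displayed differential equations. Let $A$ be the strictly connective chain complex obtained by reindexing $\tau_{\le0}\RHom_U(Lg^*I,J)$ homologically, so $A_k=\RHom^{-k}_U(Lg^*I,J)$ for $k\ge1$ and $A_0=Z^0\RHom_U(Lg^*I,J)$, the group of $0$-cocycles, i.e.\ of chain maps $Lg^*I\to J$. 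Combining $NA\cong CA/DA$ with part (1) of the Koszul complex Proposition gives $N\Delta^n\cong C\Delta^n/D\Delta^n\cong K\Delta^n$, and then part (2) gives
$$
(\DK A)_n=\Hom_{\Ch(\mathbf Z)}(N\Delta^n,A)\cong\Hom_{\Ch(\mathbf Z)}(K\Delta^n,A)\cong\bigoplus_{\sigma\subseteq\{1,\dots,n\}}A_{|\sigma|},
$$
the last isomorphism being restriction to the faces $\{0\}\cup\sigma$. The comparison map $M_g\to\DK A$ is then defined on $n$-simplices by sending $(f_L)_L$ to the tuple whose $\sigma$-component is $f_{\{0\}\cup\sigma\cup\{n+1\}}\in A_{|\sigma|}$ (a $0$-cocycle when $\sigma=\emptyset$).

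It remains to verify that this is a well-defined isomorphism of simplicial sets. Compatibility with the simplicial operators amounts to comparing the operators on $\Hom^R$ (precomposition with $\alpha\star\mathrm{id}_{[0]}$) with those on $K\Delta^\bullet$, using \ref{cons:fibration} to rewrite a component $f_L$ with $\min L>0$ as a combination of the free components and the degenerate data. Bijectivity is the statement that, given the free components $(f_{\{0\}\cup\sigma\cup\{n+1\}})_\sigma$, every other component $f_L$ with $n+1\in L$ and $0\notin L$ is uniquely determined by solving the corresponding equation of \ref{cons:fibration} for $f_L$, by downward induction on $|L|$; this is the relative analogue of the reconstruction in the proof of part (2) of the Koszul complex Proposition, and of the computation of mapping spaces in a dg-nerve behind \cite{lurie2014higher}*{Proposition 1.3.1.17}. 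Assembling over all $g$ then yields the asserted isomorphism. I expect the main obstacle to be exactly this last verification: matching the two sets of simplicial operators and running the inductive reconstruction while keeping track of the signs in \ref{cons:fibration} and of the conventions there for the identity components and for $L\,\mathrm{id}^*$. Conceptually nothing deep is happening — it is the same mechanism that makes $\Hom^R_{N_{\dg}(\bC)}(X,Y)$ the Dold--Kan complex of the connective cover of the mapping complex — but the indexing is where care is needed.
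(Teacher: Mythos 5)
Your proposal is correct and follows essentially the same route as the paper: split the right mapping space over the discrete set of scheme morphisms $g$ (the base being the nerve of a $1$-category), then identify each summand with $\DK\tau_{\le0}\RHom_U(Lg^*I,J)$ by matching the dg-nerve data $(f_L)$ against chain maps out of the normalized chains of $\Delta^n$, exactly as in the generalization of Lurie's computation for the absolute dg-nerve. The only cosmetic difference is that you route the identification through the Koszul complex $K\Delta^n\cong N\mathbf Z\Delta^n$ and its componentwise description $\bigoplus_{\sigma}A_{|\sigma|}$, whereas the paper invokes $\Hom_{\Ch(\mathbf Z)}(N\mathbf Z\Delta^n,-)$ directly and then applies the adjunctions defining $\DK$.
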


\begin{proof}

This is a direct generalization of \cite{lurie2014higher}*{Remark 1.3.1.12}. Let $\Delta^{n+1}\xrightarrow\lambda{}^\op\mathscr D_S$ be an $n$-simplex of $\Hom^R_{{}^\op\mathscr D_S}((U,J),(T,I))$.

Given a pair of integers $0\le i<j\le n+1$ there is an induced morphism
 $\delta_{ij}:\Delta^1\hookrightarrow\Delta^{n+1}$ induced by the inclusion
$[1]\hookrightarrow [n+1]$ with image $\{i,j\}$.
 Let $g=p\circ \lambda \circ \delta_{n,n+1}$, a morphism of schemes. Then by construction, for $0\le i<j\le n+1$, we have
$$
p \circ \lambda \circ \delta_{ij}=\begin{cases}
\id_U & j<n+1\\
g & j=n+1.
\end{cases}
$$
This shows that by construction we have
\begin{eqnarray*}
\Hom_{\Set_\Delta}(\Delta^n,\Hom^R_{{}^\op\mathscr D_S}((U,J),(T,I)))
&\cong&\bigsqcup_{U\xrightarrow gT}\Hom_{\Ch(\mathbf Z)}(N\mathbf Z\Delta^n,\RHom_U(Lg^*I,J))\\
&=&\bigsqcup_{U\xrightarrow gT}\Hom_{\Ab_\Delta}(\mathbf Z\Delta^n,\DK\tau_{\le0}\RHom_U(Lg^*I,J)) \\
&\cong&\bigsqcup_{U\xrightarrow gT}\Hom_{\Set_\Delta}(\Delta^n,\DK\tau_{\le0}\RHom_U(Lg^*I,J)).
\end{eqnarray*}

\end{proof}

\begin{notn}

Let $U\xrightarrow gT$ be a morphism of schemes, $I\in\mathscr D(T)$, and $J\in\mathscr D(U)$. Then we let $\Hom^R_{{}^\op\mathscr D_S,g}(J,I)=\Hom^R_{{}^\op\mathscr D_S}(J,I)\times_{\Hom^R_{\Sch_S}(U,T)}\{g\}$.

\end{notn}

If $\msC$ is an $\infty$-category, the largest sub-Kan complex of $\msC$, denoted $\msC^\simeq$ is
called \emph{the interior of} $\msC$. If $\msC$ is the nerve of an ordinary category, then its interior
is the nerve of the largest subgroupoid in $\msC$.
 In the case of a Cartesian fibration $\msD\rightarrow N$ we may take the
 subcategory on Cartesian edges $\msD_{\Cart}\rightarrow N$ and obtain a right fibration. The fibers
 are now Kan complexes, see \cite[2.1.3]{lurie2009higher}.

\begin{cor}

Let $X\xrightarrow f S$ be a flat morphism of schemes. Then by construction, the 1-category $\mathscr D^\flat_{\mathrm{pug}}(X/S)$ \cite{lieblich2006moduli}*{\S2.1} is equivalent to the 1-truncation of the interior of the full subcategory of $f_*\mathscr D_{X}$ on universally gluable $S$-perfect complexes.

\end{cor}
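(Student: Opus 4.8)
The statement is labelled ``by construction'', and the plan is simply to unwind both sides and match them. Recall that $f_*\mathscr D_X$ is the Cartesian fibration over $N(\Sch_S)$ obtained by base change of $\mathscr D_X\to N(\Sch_X)$ along $T\mapsto X_T:=X\times_S T$, so its fibre over an $S$-scheme $T$ is $\mathscr D(X_T)$ and its Cartesian edges over $g\colon U\to T$ implement the derived pullback $Lg_X^*$ along the base change $g_X\colon X_U\to X_T$. Since universal gluability and relative perfectness are stable under $Lg_X^*$, the full subcategory $Y\subseteq f_*\mathscr D_X$ on the universally gluable relatively perfect complexes is again Cartesian over $N(\Sch_S)$; passing to its subcategory on Cartesian edges $Y_{\Cart}\to N(\Sch_S)$ (interpreting the ``interior'' in the relative sense of the paragraph just above the statement) gives a right fibration with Kan-complex fibres, whose $1$-truncation is a category fibred in groupoids over $\Sch_S$. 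So the corollary amounts to identifying $\tau_{\le1}(Y_{\Cart})$ with Lieblich's $\mathscr D^\flat_{\mathrm{pug}}(X/S)$.

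To describe $\tau_{\le1}(Y_{\Cart})$ I would invoke Proposition \ref{prop:Map in D}, applied with $X$ in place of $S$, dualized through $\mathscr D_X=({}^\op\mathscr D_X)^\op$, and restricted along $T\mapsto X_T$; this yields
\[
\Hom^R_{f_*\mathscr D_X}\bigl((U,J),(T,I)\bigr)\ \cong\ \bigsqcup_{g\colon U\to T}\DK\,\tau_{\le 0}\,\RHom_{X_U}(Lg_X^*I,J).
\]
Taking $\pi_0$ and then restricting to Cartesian edges, a morphism of $\tau_{\le1}(Y_{\Cart})$ over $g\colon U\to T$ is precisely an isomorphism $Lg_X^*I\xrightarrow{\ \sim\ }J$ in $\mathbf D(X_U)$; more globally, $\tau_{\le1}(f_*\mathscr D_X)$ is the Grothendieck construction of the pseudofunctor $T\mapsto\mathbf D(X_T)$, $g\mapsto Lg_X^*$, the higher coherence data $f_I$ of the dg-nerve collapsing on homotopy categories to the canonical isomorphisms witnessing its pseudofunctoriality. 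Hence $\tau_{\le1}(Y_{\Cart})$ is the category fibred in groupoids over $\Sch_S$ whose fibre over $T$ is the groupoid of universally gluable $T$-perfect complexes on $X_T$, with morphisms over $g$ the isomorphisms $Lg_X^*I\xrightarrow{\ \sim\ }J$.

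It then remains to recognise this as Lieblich's definition \cite{lieblich2006moduli}*{\S2.1}. The objects match: universal gluability is, on both sides, the vanishing of negative self-$\Ext$ groups after arbitrary base change --- and here those groups are $\pi_{<0}$ of the relevant mapping spaces, by Proposition \ref{prop:Map in D} together with the Dold--Kan identification of homology with homotopy --- while relative (``$S$-'') perfectness is Lieblich's finiteness condition; the hypothesis that $f$ is flat is exactly what puts us in the setting of loc.\ cit. The morphisms, composition and identities on both sides are those of the fibred Grothendieck construction of $T\mapsto\mathbf D(X_T)$ restricted to the maximal subgroupoids, so they agree, and the resulting equivalence is one of fibred $1$-categories over $\Sch_S$.

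The argument carries no essential difficulty --- it is a ``by construction'' corollary --- so the work is all bookkeeping, and the two things to be careful about are: tracking the opposites and Cartesian-versus-coCartesian edges so that the transition functors of $\tau_{\le1}(f_*\mathscr D_X)$ come out as the pullbacks $Lg_X^*$ rather than the pushforwards $Rg_{X*}$; and checking that the dg-nerve coherences really do degenerate to the strict comparison isomorphisms on homotopy categories, so that one obtains Lieblich's Grothendieck construction on the nose and the equivalence is global over $\Sch_S$, not merely fibrewise.
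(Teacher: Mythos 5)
Your proposal is correct and is exactly the unwinding the paper intends: the paper offers no written argument beyond ``by construction,'' and your route --- identify $f_*\mathscr D_X$ fibrewise as $\mathscr D(X_T)$, pass to the Cartesian-edge subcategory as the relative interior, use Proposition \ref{prop:Map in D} together with the characterization of Cartesian edges to see that $\pi_0$ of the Cartesian mapping spaces over $g$ are the derived-category isomorphisms $Lg_X^*I\xrightarrow{\sim}J$, and match objects via the negative-$\Ext$/homotopy-group dictionary --- is precisely that construction. No substantive difference to report.
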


We will call a map $f:X\rightarrow Y$ of simplicial sets a \emph{presentable fibration} if
it is an inner fibration that is both Cartesian and coCartesian, see \cite[Ch. 2]{lurie2009higher}, and its
fibres are presentable quasi-categories.

\begin{lem}\label{lem:p is inner fibration}

The map ${}^\op\mathscr D_S\xrightarrow p\Sch_S$ is an inner fibration.

\end{lem}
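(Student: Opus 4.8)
The plan is the standard reduction to inner horn filling. We must solve every lifting problem whose upper edge is a map $\sigma_0\colon\Lambda^n_k\to{}^\op\mathscr D_S$, whose lower edge is an $n$-simplex $\tau\colon\Delta^n\to\Sch_S$, with $p\circ\sigma_0=\tau|_{\Lambda^n_k}$, for $n\ge2$ and $0<k<n$. The simplex $\tau$ is a chain $T_0\to T_1\to\dotsb\to T_n$ of $S$-schemes, so all of the morphisms $t_{ij}$ demanded by Construction \ref{cons:fibration} are already determined; a lift of $\sigma_0$ over $\tau$ therefore amounts to a choice of the remaining, complex-level data. This is the relative analogue of Lurie's proof that the dg-nerve is an $\infty$-category, \cite{lurie2014higher}*{Proposition 1.3.1.10}, and I would follow that template.

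First I would take inventory of the data supplied by $\sigma_0$. A face $\Delta^I\subseteq\Delta^n$ lies in $\Lambda^n_k$ if and only if $[n]\setminus\{k\}\not\subseteq I$, that is, if and only if $I\ne[n]$ and $I\ne[n]\setminus\{k\}$. Since $n\ge2$, this shows that $\sigma_0$ already provides all of the injective complexes $K_i$ on the $T_i$ (every vertex lies in the horn) together with all of the coefficients $f_I$ for $I\subsetneq[n]$ with $I\ne[n]\setminus\{k\}$; to extend $\sigma_0$ to the required $n$-simplex of ${}^\op\mathscr D_S$ over $\tau$ it remains only to define $f_{[n]\setminus\{k\}}$ and $f_{[n]}$ subject to the two corresponding instances of the structure equation in Construction \ref{cons:fibration}(3).

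As in the absolute case, I would set $f_{[n]}:=0$. In the structure equation indexed by $I=[n]$ the unknown $f_{[n]\setminus\{k\}}$ then enters exactly one summand --- the one indexed by the interior vertex $k$, where it appears post-composed with a pullback of a flat-resolution counit $r_{(-)}$ --- while every other term is already determined by $\sigma_0$; given the identifications built into Construction \ref{cons:fibration}(3), this equation isolates $f_{[n]\setminus\{k\}}$ and so defines it. It then remains to verify that this $f_{[n]\setminus\{k\}}$ satisfies the structure equation indexed by $I=[n]\setminus\{k\}$. I would do this exactly as in \cite{lurie2014higher}*{Proposition 1.3.1.10}: apply $d$ to the formula just obtained for $f_{[n]\setminus\{k\}}$, expand using $d^2=0$, and substitute the structure equations attached to the proper faces of $\Delta^n$, all of which are available from $\sigma_0$; the terms then reorganize and cancel in pairs, leaving precisely the required identity. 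The one genuinely new feature, compared with the dg-nerve, is that one must propagate the derived pullback functors $Lt_{ij}^*$ and the counits $r_{(-)}$ through this computation; since they are applied functorially and their compatibilities with composition are exactly what Construction \ref{cons:fibration}(3) records, the signs and cancellations are the same as in the absolute case.

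I expect the main obstacle to be precisely this last verification --- reproducing Lurie's sign bookkeeping in the relative setting, with the functors $Lt_{ij}^*$ and the resolution counits inserted in the correct places. The preceding steps --- the reduction to inner horns, the census of data supplied by $\sigma_0$, and the reduction to the case $f_{[n]}=0$ --- are formal.
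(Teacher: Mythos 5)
Your proposal is correct and rests on the same mechanism as the paper's proof: both of you observe that for an inner horn $\Lambda^n_k$ the only missing data are $f_{[n]}$ and $f_{[n]\setminus\{k\}}$, that these both live in $\RHom_{T_0}(Lt_{0n}^*K_n,K_0)$, and that the problem is therefore concentrated over $T_0$ and settled by the argument of \cite{lurie2014higher}*{Proposition 1.3.1.10}. The difference is one of packaging. The paper pulls all of the data for faces containing $0$ back along the functors $Lt_{0i}^*$ so as to assemble an inner horn in $N_{\dg}(\Ch(T_0))$, and then invokes Lurie's horn-filling result as a black box; you instead re-run Lurie's explicit construction (set $f_{[n]}=0$, solve the $I=[n]$ structure equation for $f_{[n]\setminus\{k\}}$ --- using, as you note, that $0<k<n$ forces the unknown to appear in exactly one summand --- and verify the $I=[n]\setminus\{k\}$ equation by a $d^2=0$ computation) directly in the relative setting. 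Your route is more explicit but must carry the pullback functors and the counits $r_{(-)}$ through the sign bookkeeping; the paper's route hides the very same issue inside the unproved assertion that the pulled-back data $(Lt_{0i}^*K_i,\,Lt_{0i_-}^*f_I)$ genuinely satisfies the dg-nerve structure equations over $T_0$, i.e.\ that the relative equations of Construction \ref{cons:fibration}, which involve the counits, become the absolute ones after pullback. Since you flag this compatibility as the one nontrivial point rather than eliding it, your proposal is, on this score, no less complete than the published argument.
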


\begin{proof}

For $n\ge2$ and $0<k<n$, consider a lifting problem
\begin{center}

\begin{tikzpicture}[xscale=4,yscale=1.5]
\node (C') at (0,1) {$\Lambda^n_k$};
\node (D') at (1,1) {${}^\op\mathscr D_S$};
\node (C) at (0,0) {$\Delta^n$};
\node (D) at (1,0) {$\Sch_S$.};
\path[->,font=\scriptsize,>=angle 90]
(C') edge node [above] {$(\sigma|_{\Lambda^n_k},(K_i)_{i\in[n]},(f_I)_{\Delta^I\subseteq\Lambda^n_i})$} (D')
(C') edge (C)
(D') edge (D)
(C) edge node [below] {$\sigma=((T_i)_{i\in[n]},(f_{ij})_{0\le i<j\le n})$} (D)
(C) edge [dashed] (D');
\end{tikzpicture}

\end{center}

Note that an $n$-simplex of $\Sch_S$ amounts to a sequence of schemes and maps
\[
T_0 \xrightarrow{f_{01}} T_1\xrightarrow{f_{12}} T_2 \xrightarrow{f_{23}}\cdots
\rightarrow T_n.
\]
The dotted arrow in the diagram amounts to giving the data of $f_I$ for
$I=\{i_-<i_1<\ldots < i_+\}$ as in (\ref{cons:fibration}). There are two possibilities,
either $i_-=0$ or otherwise. When $i_-\ne 0$ then the data of $f_I$ is determined
by the factorisation $\Delta^{\{1,2,\ldots, n\}}\subseteq \Lambda_k^n \subseteq \Delta^n$
as the horn is inner.

The upshot is that
the original lifting problem reduces to problem of the form

\begin{center}

\begin{tikzpicture}[xscale=5,yscale=1.5]
\node (C') at (0,1) {$\Lambda^n_k$};
\node (D') at (1,1) {$N_\dg(\Ch(T_0))$.};
\node (C) at (0,0) {$\Delta^n,$};
\path[->,font=\scriptsize,>=angle 90]
(C') edge node [above] {$((Lf_{0i}^*I_i)_{i\in[n]},(Lf_{0i_-}^*f_I)_{\Delta^I\subseteq\Lambda^n_i})$} (D')
(C') edge (C)
(C) edge [dashed] (D');
\end{tikzpicture}

\end{center}
Such a problem  has a solution  by \cite{lurie2014higher}*{Proposition 1.3.1.10}.

\end{proof}

\begin{lemma}\label{l:cartEdge}\label{lem:Cartesian edges}
Let $U\xrightarrow gT$ be a morphism of $S$-schemes, and $I$ a complex of injective $\mathscr O_T$-modules. Then an edge in ${}^\op\msD_S$ corresponding
corresponding to a morphism $Lg^*I\xrightarrow qJ$ is Cartesian
precisely when $\tau_{\le0}q$ is a quasi-isomorphism.
\end{lemma}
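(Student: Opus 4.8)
The plan is to apply the mapping-space criterion for $p$-Cartesian edges together with the mapping-space formula of Proposition \ref{prop:Map in D}. Write $e$ for the edge corresponding to $q$; it runs from $(U,J)$ to $(T,I)$ and covers $g$. By \cite{lurie2009higher}*{Proposition 2.4.4.3}, recalled in \S\ref{ss:Cartesian}, $e$ is $p$-Cartesian if and only if for every object $(V,L)\in{}^\op\mathscr D_S$ the square with top row $\Hom^R_{{}^\op\mathscr D_S}((V,L),(U,J))\xrightarrow{e\circ}\Hom^R_{{}^\op\mathscr D_S}((V,L),(T,I))$, bottom row $\Hom^R_{\Sch_S}(V,U)\xrightarrow{g\circ}\Hom^R_{\Sch_S}(V,T)$ and vertical maps induced by $p$ is homotopy Cartesian. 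By Proposition \ref{prop:Map in D} the top row is $\bigsqcup_{h\colon V\to U}\DK\tau_{\le0}\RHom_V(Lh^*J,L)\to\bigsqcup_{k\colon V\to T}\DK\tau_{\le0}\RHom_V(Lk^*I,L)$; on the summand indexed by $h$ it is $\DK\tau_{\le0}$ of precomposition with $Lh^*q$, after identifying $L(gh)^*I\simeq Lh^*Lg^*I$ via the functorial dg-flat resolutions of \S\ref{ss:cotorsion pairs}, and it lands in the summand indexed by $gh$. Since $\Sch_S$ is an ordinary category, the bottom row is a map of discrete sets and the vertical maps are fibrations onto discrete bases, so the square is homotopy Cartesian precisely when, for each $h\colon V\to U$, the induced map of fibres $\DK\tau_{\le0}\RHom_V(Lh^*J,L)\to\DK\tau_{\le0}\RHom_V(L(gh)^*I,L)$ is a weak equivalence.

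Next I would translate this into cohomology. The functor $\DK$ reflects and preserves weak equivalences, and $\tau_{\le0}$ of a morphism of complexes is a quasi-isomorphism exactly when that morphism induces isomorphisms on $H^n$ for $n\le0$; so the condition reads: for all $h$ and all $L$, precomposition with $Lh^*q$ is an isomorphism $\Ext^n_V(Lh^*J,L)\to\Ext^n_V(L(gh)^*I,L)$ for every $n\le0$. Passing through the adjunction $Lh^*\dashv Rh_*$ of Proposition \ref{p:adjointFunc}, this becomes the requirement that precomposition with $q$ be an isomorphism $\Hom_{\mathscr D(U)}(J,M)\to\Hom_{\mathscr D(U)}(Lg^*I,M)$ with $M$ ranging over the objects $Rh_*L[n]$, $n\le0$; here one also uses Proposition \ref{p:adjointFunc}(2), that $Rh_*$ preserves the bounds ${}^\op\mathscr D^{\ge n}_S$.

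For the ``only if'' direction I would take $V=U$, $h=\id_U$ and let $L$ and the shift vary, recovering via the homotopy-category Yoneda lemma in non-positive degrees that $\tau_{\le0}q$ is a quasi-isomorphism. For the ``if'' direction, assuming $\tau_{\le0}q$ a quasi-isomorphism, one has that $\mathrm{cone}(q)$ is acyclic in degrees $\le0$ over $U$; applying $Lh^*$ and then $\RHom_V(-,L)$ to the triangle $Lg^*I\xrightarrow{q}J\to\mathrm{cone}(q)$ identifies the cofibre of precomposition with $Lh^*q$ with a shift of $\RHom_V(Lh^*\,\mathrm{cone}(q),L)$, and one must check this contributes nothing in the relevant degrees to the mapping spaces in question. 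This last step is the main obstacle: $Lh^*$ can push the cohomology of $\mathrm{cone}(q)$ into arbitrarily negative degrees when $h$ is far from flat, so one cannot simply invoke a bound on $Lh^*\,\mathrm{cone}(q)$. The point will be that the dg-nerve mapping spaces only record $\tau_{\le0}\RHom$ — which is exactly where the $\tau_{\le0}$ in the statement comes from — so that, combined with Proposition \ref{p:adjointFunc}(2), it suffices to test against sufficiently coconnective objects, and against those the bound ``$\mathrm{cone}(q)$ acyclic in degrees $\le 0$'' is precisely what is needed rather than $q$ being a quasi-isomorphism.
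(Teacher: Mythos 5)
Your setup and your ``only if'' direction are sound and follow essentially the same route as the paper: the fibrewise criterion of \cite{lurie2009higher}*{Proposition 2.4.4.3} over the discrete Hom-sets of $\Sch_S$, Proposition \ref{prop:Map in D}, and the $(Lh^*,Rh_*)$ adjunction (the paper additionally writes down an explicit section realizing the composition map, but the reduction is the same). The genuine gap is the ``if'' direction, which you leave open, and the route you sketch for closing it does not work. The test objects are $Rh_*L[n]$ with $n\le0$ and $L$ an \emph{arbitrary} object of the fibre over $V$: nothing in the definition of a Cartesian edge lets you restrict to coconnective $L$, and even if you could, the shifts $[n]$ with $n\le0$ move the targets further in the \emph{coconnective} direction, whereas killing $\Hom_{\mathscr D(U)}(\mathrm{cone}(q),M)$ for $\mathrm{cone}(q)\in\mathscr D^{\ge1}(U)$ would require the targets $M$ to be \emph{connective}. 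So ``testing against sufficiently coconnective objects'' points the wrong way.

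In fact your own ``only if'' argument, pushed one step further, shows the gap cannot be closed as stated: with $V=U$, $h=\id_U$ and $n=0$ the test objects $L[n]=L$ already sweep out the whole fibre, so a Cartesian edge forces $\circ q\colon\Hom_{\mathscr D(U)}(J,L)\to\Hom_{\mathscr D(U)}(Lg^*I,L)$ to be bijective for \emph{every} $L$, and Yoneda in the homotopy category then forces $q$ itself, not merely $\tau_{\le0}q$, to be a quasi-isomorphism. Concretely, over $U=T=\Spec k$ with $g=\id$, take $I=k$ in degree $0$, $J=k\oplus k[-1]$, and $q$ the inclusion of the degree-$0$ summand: then $\tau_{\le0}q$ is a quasi-isomorphism, yet $\Hom_{\mathscr D(U)}(J,k[-1])=k\to\Hom_{\mathscr D(U)}(I,k[-1])=0$ is not injective, so by your own criterion the edge is not Cartesian. (The paper's proof of this direction asserts that $\tau_{\le0}q$ being a quasi-isomorphism makes $\circ Lh^*q$ a quasi-isomorphism of the full complexes $\RHom_V(-,K)$ --- precisely the step you rightly hesitate over.) The statement that does survive the two-sided argument is that the Cartesian edges are those with $q$ a quasi-isomorphism; for that version your framework closes immediately, since $Lh^*q$ is then a quasi-isomorphism and hence so is $\RHom_V(Lh^*q,L)$ for every $h$ and $L$.
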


\begin{proof}

Let $V\xrightarrow hU$ be a morphism of schemes and $K\in\mathscr D(V)$. Then as the precomposition map $\RHom_V(Lh^*Lg^*I,K)\xleftarrow{\circ h^*r_{Lg^*I}}\RHom_V(L(gh)^*I,K)$ is a quasi-isomorphism between dg-injective complexes, it has a section $s$. Let us define a section $t$ of the restriction map
$$
\Hom^R_{{}^\op\mathscr D_S,h}(K,J)=({}^\op\mathscr D_S)_{/J}\times_{{}^\op\mathscr D_S\times(\Sch_S)_{/U}}\{(K,h)\}\leftarrow({}^\op\mathscr D_S)_{/q}\times_{{}^\op\mathscr D_S\times(\Sch_S)_{/g}}\{(K,(V\xrightarrow hU\xrightarrow gT))\}
$$
by mapping $\Delta^n\to\Hom^R_{{}^\op\mathscr D_S,h}(K,J)$ defined by $\Delta^{n+1}\xrightarrow{(\sigma,(K_i),(f_I))}{}^\op\mathscr D_S$ to $\Delta^{n+2}\xrightarrow{(\Bar\sigma,(\Bar K_i),(\Bar f_I))}{}^\op\mathscr D_S$, which is defined as follows.

\begin{enumerate}

\item We have $\Bar\sigma=(V\xrightarrow\id V\to\dotsb\to V\xrightarrow hU\xrightarrow gT)$.

\item We have $\Bar K_0=\dotsb=\Bar K_n=K$, $\Bar K_{n+1}=J$, and $\Bar K_{n+2}=I$.

\item

\begin{enumerate}

\item If $I\subseteq[0,n+1]$, then $\Bar f_I=f_I$.

\item We have $\Bar f_{\{n+1,n+2\}}=q$.

\item If $[n,n+2]\subseteq I$, then $\Bar f_I=0$.

\item If $\{n+2\}\subset I\subseteq[0,n]\cup\{n+2\}$, then $\Bar f=s(f_{I\cup\{n+1\}\setminus\{n+2\}}\circ Lh^*q)$.

\end{enumerate}

\end{enumerate}

Then we get a commutative diagram
\begin{center}

\begin{tikzpicture}[xscale=4,yscale=1.5]
\node (C') at (-1,1) {$\Map_V(Lh^*J,K)$};
\node (E') at (0.5,1) {$\Map_V(Lh^*Lg^*I,K)$};
\node (D') at (2,1) {$\Map_V(L(gh)^*I,K)$};
\node (C) at (-1,0) {$\Hom^R_{{}^\op\mathscr D_S,h}(K,J)$};
\node (E) at (0.5,0) {$({}^\op\mathscr D_S)_{/q}\times_{{}^\op\mathscr D_S\times(\Sch_S)_{/g}}\{(K,(V\xrightarrow hU\xrightarrow gT))\}$};
\node (D) at (2,0) {$\Hom_{{}^\op\mathscr D_S,gh}^R(K,I)$};
\path[->,font=\scriptsize,>=angle 90]
(C') edge node [above] {$\circ Lh^*q$} (E')
(E') edge node [above] {$\DK\tau_{\le0}s$} node [below] {$\simeq$} (D')
(C') edge node [right] {$\cong$} (C)
(D') edge node [right] {$\cong$} (D)
(C) edge node [above] {$t$} node [below] {$\simeq$} (E)
(E) edge node [above] {$\res$} (D);
\end{tikzpicture}

\end{center}
where the vertical maps are components of the isomorphisms of Proposition \ref{prop:Map in D}.

If $\tau_{\le0}q$ is a quasi-isomorphism, then so is $\RHom_V(Lh^*J,K)\xrightarrow{\circ Lh^*q}\RHom_V(Lh^*Lg^*I,K)$. Thus the map $\Map_V(Lh^*J,K)\xrightarrow{\circ\DK\tau^{\le0}Lh^*q}\Map_V(Lh^*Lg^*I,K)$ is an equivalence \cite{schwede2003equivalences}*{\S4.1}. This in turn implies that the restriction map $({}^\op\mathscr D_S)_{/q}\times_{{}^\op\mathscr D_S\times(\Sch_S)_{/g}}\{(K,(V\xrightarrow hU\xrightarrow gT))\}\to\Hom_{{}^\op\mathscr D_S,gh}^R(K,I)$ is an equivalence too. This shows that $q$ gives a $p$-Cartesian edge of ${}^\op\mathscr D_S$ \cite[Proposition 2.4.4.3]{lurie2009higher}.

On the other hand, suppose that $q$ gives a $p$-Cartesian edge of ${}^\op\mathscr D_S$. Then for all $K\in\mathscr D(U)$, the map $\Map_V(J,K)\xrightarrow{\circ\DK\tau^{\le0}q}\Map_V(Lg^*I,K)$ is an equivalence. Therefore, $\DK\tau_{\le0}q$ is an equivalence. This implies that $\tau_{\le0}q$ is a quasi-isomorphism.

\end{proof}

Let $U\xrightarrow gT$ be a morphism of $S$-schemes. Since ${}^\op\mathscr D_S\xrightarrow p\Sch_S$ is a Cartesian fibration, its restriction ${}^\op\mathscr D_S|g\to\Delta^1$ is classified by a functor of quasi-categories $\mathscr D(T)^\op\xrightarrow{g^*_{{}^\op\mathscr D_S}}\mathscr D(U)^\op$. We want to show that this functor is equivalent to the opposite $\mathscr D(T)^\op\xrightarrow{(Lg^*)^\op}\mathscr D(U)^\op$ of the derived pullback functor we have constructed in Notation \ref{notn:derived pullback and pushforward}. We will show this by showing that the Cartesian fibration $p|g$ is equivalent to the opposite of the \emph{relative nerve} of $(Lg^*)^\op$. That is a Cartesian fibration over $\Delta^1$, which is classified by $g^*_{{}^\op\mathscr D_S}$ \cite{lurie2009higher}*{Corollary 3.2.5.20}.

Let $\mathbf C$ be a 1-category. Let's recall the construction of the opposite $N_F(\mathbf C)^\op$ of the relative nerve of a functor $\mathbf C^\op\xrightarrow F\Set_\Delta$. For a finite linearly ordered set $J$, a simplex $\Delta^J\to N_F(\mathbf C)^\op$ is given by the following data.

\begin{enumerate}

\item A diagram $J\xrightarrow\sigma\mathbf C$,

\item and for each $\emptyset\ne J'\subseteq J$ with $\min J'=j'$, a map of simplicial sets $\Delta^{J'}\xrightarrow{\tau(J')}F(\sigma(j'))$, such that

\item for each $\emptyset\ne J''\subseteq J'\subseteq J$ with $\min J'=j'$ and $\min J''=j''$, the diagram
\begin{center}

\begin{tikzpicture}[xscale=4,yscale=1.5]
\node (C') at (0,1) {$\Delta^{J''}$};
\node (D') at (1,1) {$F(\sigma(j''))$};
\node (C) at (0,0) {$\Delta^{J'}$};
\node (D) at (1,0) {$F(\sigma(j')$};
\path[->,font=\scriptsize,>=angle 90]
(C') edge node [above] {$\tau(J'')$} (D')
(C') edge (C)
(D') edge node [right] {$F(\sigma(j'\to j''))$}(D)
(C) edge node [above] {$\tau(J')$} (D);
\end{tikzpicture}

\end{center}
is commutative.

\end{enumerate}
The face and degeneracy maps can be given by precomposition.

\begin{lem}\label{lem:Lg^* is pullback}

Let $U\xrightarrow gT$ be a morphism of $S$-schemes. Then the restriction ${}^\op\mathscr D_S|g$ is classified by $\mathscr D(T)^\op\xrightarrow{(Lg^*)^\op}\mathscr D(U)^\op$.

\end{lem}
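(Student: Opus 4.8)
The plan is to realize $p|g\colon{}^\op\mathscr D_S|g\to\Delta^1$ as a Cartesian fibration equivalent over $\Delta^1$ to the opposite relative nerve $N_F([1])^\op$ of the functor $F\colon[1]^\op\to\Set_\Delta$ with $F(1)=\mathscr D(T)^\op$, $F(0)=\mathscr D(U)^\op$, and $F$ of the unique non-degenerate arrow equal to $(Lg^*)^\op$. Since $\mathscr D(T)^\op$ and $\mathscr D(U)^\op$ are quasi-categories, \cite{lurie2009higher}*{Corollary 3.2.5.20} shows that $N_F([1])^\op\to\Delta^1$ is a Cartesian fibration classified by $(Lg^*)^\op$, so it will suffice to compare it with $p|g$. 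Both $p|g$ and $N_F([1])^\op$ have fibre $\mathscr D(U)^\op$ over $0$ and $\mathscr D(T)^\op$ over $1$ (for $p|g$ this is the Remark following Construction \ref{cons:fibration}), so by \cite{lurie2009higher}*{Proposition 3.1.3.5} it is enough to produce a map of simplicial sets over $\Delta^1$ between the two which restricts to the identity on these two fibres.

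To build such a map I would reorganise the data directly. An $n$-simplex of ${}^\op\mathscr D_S|g$ has underlying $n$-simplex of $\Delta^1$ given by a \emph{jump index} $j_0\in\{0,\dots,n+1\}$: the vertices $i<j_0$ carry $U$ and the vertices $i\ge j_0$ carry $T$, with $t_{ij}$ an identity when $i,j$ lie on the same side of $j_0$ and $t_{ij}=g$ when $i<j_0\le j$. The remaining data then splits into: the \emph{$T$-block} $\bigl((K_i)_{i\ge j_0},(f_I)_{I\subseteq\{j_0,\dots,n\}}\bigr)$, which is a $\{j_0,\dots,n\}$-simplex of $\mathscr D(T)^\op$; the \emph{$U$-block} $\bigl((K_i)_{i<j_0},(f_I)_{\max I<j_0}\bigr)$, a $\{0,\dots,j_0-1\}$-simplex of $\mathscr D(U)^\op$; and the \emph{straddling maps} $f_I\in\RHom_U^{|I|-2}(Lg^*K_{\max I},K_{\min I})$ for $\min I<j_0\le\max I$. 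On the other side, an $n$-simplex of $N_F([1])^\op$ with jump index $j_0$ amounts to a pair $(\alpha,\beta)$ with $\alpha=\tau(\{j_0,\dots,n\})$ a $\{j_0,\dots,n\}$-simplex of $\mathscr D(T)^\op$ and $\beta=\tau([n])$ an $n$-simplex of $\mathscr D(U)^\op$ whose restriction to $\{j_0,\dots,n\}$ is forced by the coherence squares to be $(Lg^*)^\op\circ\alpha$ (all other $\tau(J')$ being restrictions of $\alpha$ or $\beta$); moreover the objects of $(Lg^*)^\op\circ\alpha$ at the vertices $i\ge j_0$ are precisely the derived pullbacks $Lg^*K_i$. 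Thus $\beta$ is specified exactly by the $U$-block, the straddling maps, and the forced value on $\{j_0,\dots,n\}$, so sending $\alpha:=$ the $T$-block and $\beta:=$ this reorganisation (and dually in the constant cases $j_0\in\{0,n+1\}$) defines the comparison map. It is visibly over $\Delta^1$ and the identity on the fibres over $0$ and $1$; compatibility with faces and degeneracies is routine, using that a degenerate edge contributes an identity when $|I|=2$ and a zero otherwise, both in Construction \ref{cons:fibration} and in the relative nerve.

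The step with genuine content — and the main obstacle — is checking that this assignment is well defined, i.e.\ that the $\beta$ produced really is a simplex of $\mathscr D(U)^\op$: equivalently, that the cocycle equation (3) of Construction \ref{cons:fibration} for a straddling $I$ coincides, under the above dictionary, with the dg-nerve relation for $\beta$ at $I$, whose relevant objects are the $Lg^*K_i$. Expanding that dg-nerve relation in terms of the complexes $Lg^*K_i=g^*P_TK_i$ should reproduce exactly the twist by the pullback functors $Lt_{\bullet\bullet}^*$ and by the flat-resolution counit $r_\bullet$ appearing in (3); the part of these relations supported entirely on $\{j_0,\dots,n\}$ holds automatically because $(Lg^*)^\op$ is a functor of quasi-categories, so only the straddling identities need to be verified. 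This is a finite bookkeeping computation with the sign conventions of the dg-nerve and of the opposite of a dg-category; everything else is an unwinding of definitions.
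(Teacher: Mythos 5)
Your proposal matches the paper's proof: the paper likewise invokes \cite{lurie2009higher}*{Corollary 3.2.5.20}, unwinds the opposite relative nerve of $(Lg^*)^\op$ over $\Delta^1$, and reorganises the simplex data of ${}^\op\mathscr D_S|g$ into exactly your ($T$-block, $U$-block plus straddling maps) dictionary, asserting this gives an isomorphism of simplicial sets over $\Delta^1$ and leaving the cocycle/sign verification as ``one can check.'' The only real difference is that the paper claims an outright isomorphism, so your fallback appeal to \cite{lurie2009higher}*{Proposition 3.1.3.5} is not needed (and, as stated, that reduction would also require the comparison map to carry Cartesian edges to Cartesian edges, not merely to be an equivalence on the two fibres).
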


\begin{proof}

Let's write down the opposite relative nerve of $F\colon(\Delta^1)^\op\xrightarrow{\mathscr D(T)^\op\xrightarrow{(Lg^*)^\op}\mathscr D(U)^\op}\Set_\Delta$. A simplex $\Delta^J\to(N_F\Delta^1)^\op$ is given by the following.

(1) A map $J\xrightarrow{\sigma}[1]$,

(2) and for $\emptyset\ne J'\subseteq J$,

(2a) a simplex $\Delta^{J'}\xrightarrow{\tau(J')}\mathscr D(U)^\op$ if $0\in \sigma(J')$, and

(2b) a simplex $\Delta^{J'}\xrightarrow{\tau(J')}\mathscr D(T)^\op$ if $0\notin\sigma(J')$, such that

(3) for $\emptyset\ne J''\subseteq J'\subseteq J$,

(3a) if $0\in\sigma(J'')$ or $0\notin\sigma(J')$, then we have $\tau(J'')=\tau(J')|\Delta^{J''}$, and

(3b) if $0\in\sigma(J')\setminus\sigma(J'')$, then we have $Lg^*\circ\tau(J'')=\tau(J')|\Delta^{J''}$.

For $\emptyset\ne J'\subseteq J$, let $\tau(J')=(\sigma|J',(I_i),(f_{J''})$. One can check that $(\sigma,(I_i),(f_{J'}))$ gives a $J$-simplex of ${}^\op\mathscr D^\op|g$, and that this assignment gives an isomorphism $(N_F\Delta^1)^\op\cong{}^\op\mathscr D_S|g$.

\end{proof}

\begin{prop}\label{prop:D is Cartesian}

The map ${}^\op\mathscr D_S\xrightarrow pN(\Sch_S)$ is a presentable fibration.

\end{prop}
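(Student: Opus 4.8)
The plan is to verify the three defining properties of a presentable fibration in turn. That $p$ is an inner fibration is Lemma~\ref{lem:p is inner fibration}. For the Cartesian property, fix a vertex $(T,I)$ of ${}^\op\mathscr D_S$ and an edge $U\xrightarrow{g}T$ of $\Sch_S$, and choose a dg-injective resolution $Lg^*I\xrightarrow{q}J$, noting that $J$ again has quasi-coherent cohomology. The $1$-simplex of ${}^\op\mathscr D_S$ specified by the scheme map $g$, the complexes $(K_0,K_1)=(J,I)$, and $f_{\{0,1\}}=q$ then lies over $g$ and ends at $(T,I)$, and since $q$, hence $\tau_{\le0}q$, is a quasi-isomorphism it is $p$-Cartesian by Lemma~\ref{lem:Cartesian edges}. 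Combined with the inner fibration property, this shows $p$ is a Cartesian fibration.

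For the coCartesian property I would argue through the classifying functor rather than construct coCartesian edges by hand. By Lemma~\ref{lem:Lg^* is pullback} the restriction of $p$ along an edge $U\xrightarrow{g}T$ is classified by $(Lg^*)^\op\colon\mathscr D(T)^\op\to\mathscr D(U)^\op$, and by Proposition~\ref{p:adjointFunc}(1) the functor $Lg^*$ is left adjoint to $Rg_*$, so $(Lg^*)^\op$ admits the left adjoint $(Rg_*)^\op$. Thus the functor $\Sch_S^\op\to\Cat_\infty$ classifying the Cartesian fibration $p$ carries every morphism of $\Sch_S$ to a functor admitting a left adjoint, and a Cartesian fibration with this property is automatically a coCartesian fibration \cite{lurie2009higher}*{Corollary 5.2.2.5}. (Alternatively one can produce the coCartesian edges directly from the relative-nerve description of Lemma~\ref{lem:Lg^* is pullback} together with the unit of $Lg^*\dashv Rg_*$, but the abstract argument is shorter.)

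It remains to see that the fibre over $T$, namely ${}^\op\mathscr D(T)=\mathscr D(T)^\op$, is presentable. Here I would appeal to the set-up of \S\ref{ss:dgNerve}--\S\ref{ss:cotorsion pairs}: $\mathscr D(T)=N_\dg\Ch_{\mathrm{qc},\mathrm{inj}}(T)$ is the $\infty$-category underlying the combinatorial injective model structure on $\Ch(\mathscr O_T\text{-Mod})$, cut down to the full subcategory on complexes with quasi-coherent cohomology, so for a quasi-compact quasi-separated $T$ it is a stable, compactly generated, presentable $\infty$-category (cf.~\cite{ben-zvi2010integral}); this is exactly why the construction was enlarged to unbounded complexes and the two cotorsion-pair model structures of \S\ref{ss:cotorsion pairs} were introduced.

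I expect the main obstacle to lie in this last point rather than in the fibration properties: given Lemmas~\ref{lem:p is inner fibration}, \ref{lem:Cartesian edges} and \ref{lem:Lg^* is pullback} together with Proposition~\ref{p:adjointFunc}, the Cartesian and coCartesian verifications are essentially bookkeeping, whereas pinning down the fibres precisely enough --- that $N_\dg\Ch_{\mathrm{qc},\mathrm{inj}}(T)$ really is (equivalent to) the underlying $\infty$-category of a combinatorial model category, and that passing to the full subcategory on quasi-coherent cohomology preserves presentability --- is where the care of \S\ref{ss:cotorsion pairs} is genuinely needed.
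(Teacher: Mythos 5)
Your proof is correct and follows essentially the same route as the paper's: inner fibration via Lemma~\ref{lem:p is inner fibration}, Cartesian lifts from dg-injective resolutions together with Lemma~\ref{lem:Cartesian edges}, the coCartesian property from the adjunction $Lg^*\dashv Rg_*$ of Proposition~\ref{p:adjointFunc} applied to the classifying functor of Lemma~\ref{lem:Lg^* is pullback}, and presentability of the fibres. The only cosmetic differences are that the paper justifies presentability of the fibres by citing \cite{lurie2014higher}*{Proposition 1.3.5.21} rather than your model-categorical sketch, and folds the coCartesian and presentability conclusions into a single appeal to \cite{lurie2009higher}*{Proposition 5.5.3.3}.
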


\begin{proof}

By Lemma \ref{lem:p is inner fibration}, $p$ is an inner fibration. Moreover, by Lemma \ref{lem:Cartesian edges}, it is a Cartesian fibration. Let $U\xrightarrow gT$ be a morphism of $S$-schemes. Then by Lemma \ref{lem:Lg^* is pullback}, the pullback map over $g$ is equivalent to $Lg^*$. This functor admits a right adjoint by Proposition \ref{p:adjointFunc}. Moreover, the fibres are indeed presentable \cite{lurie2014higher}*{Proposition 1.3.5.21}. Therefore, $p$ is a presentable fibration \cite{lurie2009higher}*{Proposition 5.5.3.3}.

\end{proof}

\begin{corollary}\label{c:isQcat}
The simplicial set ${}^\op\msD_S$ is a quasi-category.
\end{corollary}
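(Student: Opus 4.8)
The plan is to observe that the statement is a formal consequence of Lemma \ref{lem:p is inner fibration} together with the elementary fact that the class of inner fibrations is closed under composition. Recall that a simplicial set $X$ is a quasi-category exactly when the terminal map $X\to\ast$ is an inner fibration, and that if $X\xrightarrow{p}Y$ and $Y\xrightarrow{q}Z$ are inner fibrations then so is $q\circ p$ (this is immediate from the definition via lifting against the inner horn inclusions $\Lambda^n_k\subset\Delta^n$, $0<k<n$, since that class of maps does not change).

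First I would note that $N(\Sch_S)$ is a quasi-category: the nerve of any ordinary category is a quasi-category — indeed it admits \emph{unique} fillers for inner horns — so the structure map $N(\Sch_S)\to\ast$ is an inner fibration. Next, by Lemma \ref{lem:p is inner fibration}, the forgetful map ${}^\op\msD_S\xrightarrow{p}N(\Sch_S)$ is an inner fibration. Composing these two inner fibrations gives that ${}^\op\msD_S\to N(\Sch_S)\to\ast$ is an inner fibration, which is precisely the assertion that ${}^\op\msD_S$ is a quasi-category.

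There is no real obstacle here; all of the work has already been carried out in Lemma \ref{lem:p is inner fibration} (and is in any case subsumed in Proposition \ref{prop:D is Cartesian}, which exhibits $p$ as a presentable — hence in particular Cartesian, hence inner — fibration over the quasi-category $N(\Sch_S)$). One could equally phrase the proof by invoking that Proposition directly.
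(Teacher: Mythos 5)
Your proof is correct and is essentially the paper's own argument: the paper likewise observes that $N(\Sch_S)$ is a quasi-category and solves the inner horn lifting problem by first lifting to $\Sch_S$ and then applying the inner fibration property of $p$, which is exactly the composition-of-inner-fibrations argument you spell out.
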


\begin{proof}
We know that $N(\Sch_S)$ is a quasi-category, indeed it is a category.
The relevant lifting problem is then solved by first lifting to schemes
then applying the proposition.
\end{proof}

%%%%%%%%%%%% Descent %%%%%%%%%%%%%%%%%%%

\section{Descent and the stack $\SRHom$}\label{s:theorem}

\begin{proposition}\label{p:stable} Let $K$ be a small simplicial set.
Let $K\xrightarrow k\Sch_S$ be a diagram of $S$-schemes. Then
$\Gamma(k,{}^\op\mathscr D_S)$ and
$\Gamma_{\Cart}(k,{}^\op\mathscr D_S)$ are presentable stable quasi-categories.
\end{proposition}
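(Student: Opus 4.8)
The plan is to reduce everything to Proposition~\ref{prop:D is Cartesian}, which asserts that $p\colon{}^\op\msD_S\to N(\Sch_S)$ is a presentable fibration; its fibres $\mathscr D(T)^\op$ are moreover stable, being opposites of the stable quasi-categories $\mathscr D(T)=\ndg(\Ch_{\mathrm{qc},\mathrm{inj}}(T))$ \cite{lurie2014higher}*{\S1.3}. First I would note that presentable, Cartesian and coCartesian fibrations are all stable under base change and that base change does not alter the fibres, so that $p|k\colon K\times_{N(\Sch_S)}{}^\op\msD_S\to K$ is again a presentable fibration with presentable stable fibres, now over the \emph{small} simplicial set $K$. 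I would also record, using Lemma~\ref{lem:Lg^* is pullback} and Proposition~\ref{p:adjointFunc}, that for a morphism $g\colon U\to T$ of $S$-schemes the Cartesian transition functor is $(Lg^*)^\op$ and the coCartesian one is its left adjoint $(Rg_*)^\op$; both are exact functors of stable quasi-categories, and $(Lg^*)^\op$ in addition preserves all small limits and is accessible because $Lg^*$ is a left adjoint. Consequently the straightening $\kappa\colon K^\op\to\Cat_\infty$ of $p|k$ factors through the subcategory $\PrR$ of presentable quasi-categories and right adjoints, and even through its full subcategory spanned by the stable objects and the exact functors.

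For the Cartesian sections the argument is then immediate. By \cite{lurie2009higher}*{Corollary 3.3.3.2} (recalled in \S\ref{ss:descent}) one has $\Gamma_{\Cart}(k,{}^\op\msD_S)=\Gamma_{\Cart}(K,p|k)\simeq\lim\kappa$. Now $\PrR$ admits small limits and they are computed in $\widehat{\Cat}_\infty$; likewise the $\infty$-category of stable quasi-categories and exact functors admits small limits, again computed in $\widehat{\Cat}_\infty$ (see \cite{lurie2009higher}*{\S5.5.3} and \cite{lurie2014higher}*{\S1.1.4}). Hence $\lim\kappa$ is presentable and stable, so $\Gamma_{\Cart}(k,{}^\op\msD_S)$ is a presentable stable quasi-category.

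For the full section quasi-category $\Gamma(k,{}^\op\msD_S)=\Gamma(K,p|k)$ I would treat stability and presentability separately. Stability I would verify by hand, fibrewise: the constant section $m\mapsto 0$ is a genuine section because the Cartesian transition functors (right adjoints) and the coCartesian ones (left adjoints) all preserve the zero object, and it is both initial and terminal; finite limits exist in $\Gamma(K,p|k)$ and are computed fibrewise, since $p|k$ is a Cartesian fibration whose fibres have finite limits preserved by the transition functors, and dually finite colimits are computed fibrewise through the coCartesian structure; therefore a square in $\Gamma(K,p|k)$ is a pushout precisely when it is a fibrewise pushout, precisely when — by stability of the fibres — it is a fibrewise pullback, precisely when it is a pullback, which gives stability. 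Presentability I would deduce from the general fact that the $\infty$-category of sections of a presentable fibration over a small simplicial set is presentable; concretely, $\Gamma(K,p|k)$ admits all small colimits, computed fibrewise through the coCartesian structure, and is accessible, hence presentable.

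I expect this last point — presentability (equivalently, accessibility) of the full section quasi-category — to be the only real obstacle. Unlike $\Gamma_{\Cart}$, which is an honest limit in $\PrR$ and so presentable for free, $\Gamma(K,p|k)$ is merely an \emph{oplax} limit of the diagram $\kappa$ along the covariant transition functors $(Rg_*)^\op$, so one cannot simply invoke closure of $\PrR$ under limits and must establish accessibility of the section category directly (or cite it). Everything else, including stability in both cases and presentability of $\Gamma_{\Cart}$, is formal once Proposition~\ref{prop:D is Cartesian} and the adjunctions of Proposition~\ref{p:adjointFunc} are in hand.
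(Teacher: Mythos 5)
Your argument is correct and, for the Cartesian sections, genuinely different from the paper's. You identify $\Gamma_{\Cart}(k,{}^\op\msD_S)$ with $\lim\kappa$ for the straightening $\kappa\colon K^\op\to\Cat_\infty$, observe that $\kappa$ lands in $\PrR$ and in stable quasi-categories with exact functors (both closed under small limits computed in $\widehat{\Cat}_\infty$), and get presentability and stability of $\Gamma_{\Cart}$ in one stroke, with no need to exhibit a zero object or check closure under shifts and cofibres. The paper works from the inside out instead: it obtains presentability of $\Gamma$ and then of $\Gamma_{\Cart}$ (as an accessible localization) from \cite{lurie2009higher}*{Proposition 5.5.3.17}, constructs the zero section explicitly and verifies via Lemma \ref{l:cartEdge} and Corollary \ref{cor:contractible fibrewise mapping spaces} that it is a zero object lying in $\Gamma_{\Cart}$, and then argues that fibres and cofibres agree because they do so fibrewise in the stable fibres --- essentially the fibrewise stability argument you give for $\Gamma$, which both of you need in any case. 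The one input you leave as ``cite it,'' namely presentability of the full section quasi-category of a presentable fibration over a small simplicial set, is precisely \cite{lurie2009higher}*{Proposition 5.5.3.17}, which is what the paper invokes; so your diagnosis that this is the only non-formal step for $\Gamma$ is accurate and there is no gap. Two small remarks. First, when you say finite (co)limits in $\Gamma(K,p|k)$ are computed fibrewise, be explicit about the reference: the paper cites \cite{lurie2009higher}*{Proposition 5.1.2.2}, which strictly concerns functor categories rather than section categories, so the hypotheses you actually check --- fibres with finite (co)limits preserved by the transition functors $(Lg^*)^\op$ and $(Rg_*)^\op$ of Proposition \ref{p:adjointFunc} --- are the right ones and should be tied to the relative statement underlying the proof of \cite{lurie2009higher}*{Proposition 5.5.3.17}. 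Second, your route makes the membership of $\kappa$ in $\PrR$, i.e.\ the presentability of the fibres $\mathscr D(T)^\op$ asserted in Proposition \ref{prop:D is Cartesian}, fully load-bearing; you are using that hypothesis in exactly the same way the paper does, so within the paper's framework your proof is complete.
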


\begin{proof}
These section categories are in fact quasi-categories via \ref{c:isQcat}.

Recall that ${}^\op\mathscr D_S\xrightarrow p N(\Sch_S)$ is a presentable fibration,
see Proposition \ref{prop:D is Cartesian}.
If follows that the category of functors $\Fun(K,{}^\op\mathscr D_S)$ is presentable
by \cite[5.5.3.6]{lurie2009higher}, for any small simplicial set $K$. It follows that
the category of sections $\Gamma(k,{}^\op\mathscr D_S)$ is presentable by \cite[5.5.3.17]{lurie2009higher}.
The subquasi-category $\Gamma_{\Cart}(k,{}^\op\mathscr D_S)\subset\Gamma(k,{}^\op\mathscr D_S)$ is an accessible localization of a presentable quasi-category \cite{lurie2009higher}*{Proposition 5.5.3.17}.

It remains to show that the categories are in fact stable.

Firstly observe that there is a $0$-section, denoted ${\bf 0}\in \Gamma(k,{}^\op\mathscr D_S)$
It sends  a simplex $\sigma=(T_i,t_ij)\in N(\Sch_S)$ to the data
$(\sigma, 0_i, (0_I)_{I\subseteq [n]})$ where $0_i$ is the zero complex on $T_i$ and
$0_I$ is the zero  morphism of degree $|I|-2$.

It follows from \ref{l:cartEdge} that this section lies inside
$\Gamma_{\Cart}(k,{}^\op\mathscr D_S)$. The zero section is a zero object by Corollary
\ref{cor:contractible fibrewise mapping spaces}.

As these section categoires are presentable they have small colimits.
By \cite[5.5.2.5]{lurie2009higher} these quasi-categories have small limits.
It follows that these section categories have fibers and cofibers. We need to show
that these two concepts agree, see \cite[Ch. 1]{lurie2014higher}.

The fibers of $p:{}^\op\msD_S\rightarrow N(\Sch_S)$ are  the usual derived
quasi-categories, see 1.3 \emph{loc. cit},  which are known to be stable.
As fibers and cofibers agree pointwise they agree in the section categories by
\cite[5.1.2.2]{lurie2009higher}.

The looping and delooping functors amount to shifts. Hence Cartesian sections
are closed under delooping and looping.

\end{proof}

If $\msX$ is a stable quasi-category then a $t$-structure on $\msX$ amounts to a
$t$-structure on its homotopy category. In other words a $t$-structure on $\msX$
amounts to two full subcategories $\msX_{\ge 0}$ and $\msX_{\le 0}$ that produce
$t$-structures on $\tau_1\msX$.

Recall in (\ref{n:truncation}) we introduced subcategories
${}^\op\msD_S^{\le 0}$ and ${}^\op\msD_S^{\ge 0}$ of ${}^\op\msD_S$. We would like
to show that taking Cartesian sections into these subcategories indeed produces
a $t$-structure on $\Gamma_{\Cart}(k,{}^\op\msD_S)$ for appropriate choice of diagram $k$.
As we do not have a good description of the homotopy category of
$\Gamma_{\Cart}(k,{}^\op\msD_S)$ we need some other tools to obtain a $t$-structure on it.

The subcategories $\msX_{\ge n}$ are localisations of $\msX$. Localisations are characterised
by sets of morphisms with respect to which we are localising, see
\cite[5.5.4.2, 5.5.4.15]{lurie2009higher}. Lets briefly recall some of these ideas.

Let $A\subseteq \msX$ be a set of morphisms (i.e edges). An object $Z\in \msX_0$ is said
to be $A$-local if for each $a:X\rightarrow Y$ in $Z$ the induced morphism
\[
\Map_\msX(Y,Z)\rightarrow \Map_\msX(X,Z)
\]
is a weak equivalence. Let $\msX'$ be the full subcategory on the $A$-local object.
Then $\msX'$ is a localisation of $\msX$, in other words the inclusion
$\msX'\hookrightarrow \msX$ has a left adjoint.

For each $S$-scheme $T$, the fiber of ${}^\op\msD_S$ over $T$ is the opposite of the usual derived
$\infty$-category which  has a $t$-structure. Let $A$ be the collection of
edges of the form $I\rightarrow \tau^{\ge 0}I$ as $I$ and $T$ vary.

Recall that ${}^\op\msD_S^{\ge 0}$ is the full subcategory of ${}^\op\msD_S$ spanned
by those complexes whose negative cohomology vanishes.

\begin{lemma}\label{l:isLocalisation}
In the notation above, a complex $K^\bullet$ is $A$-local if and only if
$K\in{}^\op\msD_S^{\ge0}$. Hence the inclusion ${}^\op\msD_S^{\ge 0}\hookrightarrow {}^\op\msD_S$
has a left adjoint.
\end{lemma}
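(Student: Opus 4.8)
The plan is to compute the mapping space $\Hom^R_{{}^\op\msD_S}(K, L)$ for an object $K \in {}^\op\msD_S$ lying over a scheme $T$ and an arbitrary object $L$ over a scheme $U$, and use the explicit description from Proposition \ref{prop:Map in D} to test the $A$-local condition. Recall from that proposition that
$$
\Hom^R_{{}^\op\mathscr D_S}(L,K) \cong \bigsqcup_{U \xrightarrow{g} T} \DK\tau_{\le 0}\RHom_U(Lg^*K, L),
$$
so a morphism $a\colon I \to \tau^{\ge 0}I$ in $A$ (over a fixed scheme $T$) induces, componentwise over each $g\colon U \to T$, the map
$$
\DK\tau_{\le 0}\RHom_U(Lg^*(\tau^{\ge 0}I), L) \xrightarrow{\ \circ Lg^*a\ } \DK\tau_{\le 0}\RHom_U(Lg^*I, L).
$$
First I would reduce the $A$-local test to the following statement: $L$ (over $U$) is $A$-local if and only if, for every $g\colon U\to T$ and every $I$ over $T$, the displayed map is a homotopy equivalence; and by the $\DK\tau_{\le 0}$ dictionary (the Dold--Kan correspondence identifies $\pi_n$ with $H^{-n}$, as used repeatedly in \S\ref{ss:dgNerve}), this is equivalent to asking that $\RHom_U(Lg^*I, L) \to \RHom_U(Lg^*(\tau^{\ge 0}I), L)$ induce isomorphisms on $H^j$ for all $j \le 0$.

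Next I would identify the negative cohomology of $\RHom$ with negative $\Ext$ groups (equivalently with homotopy groups of the mapping space, per the remark near \ref{prop:Map in D}): $H^{-n}\RHom_U(M, L) \cong \pi_n \Map_U(M, L)$ for $n \ge 0$. The key computational input is the fibre sequence of complexes attached to the truncation triangle $\tau^{\le -1}I \to I \to \tau^{\ge 0}I \to$, pulled back by the \emph{left}-derived, hence exact, functor $Lg^*$ and then hit with $\RHom_U(-, L)$; this yields a long exact sequence relating $H^j\RHom_U(Lg^*I, L)$, $H^j\RHom_U(Lg^*\tau^{\ge 0}I, L)$ and $H^j\RHom_U(Lg^*\tau^{\le -1}I, L)$. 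Since $\tau^{\le -1}I \in \msD^{\le -1}$, the complex $Lg^*\tau^{\le -1}I$ lies in $\msD^{\le -1}(U)$; so by the second part of the Lemma in \S\ref{ss:dgNerve} (the truncation-invariance of mapping spaces, using that $\Map(\tau_{\ge 0}J, K)\to\Map(J,K)$ is an equivalence, applied in the dual/shifted form), the groups $H^j\RHom_U(Lg^*\tau^{\le -1}I, L) = \pi_{-j}\Map_U(Lg^*\tau^{\le -1}I, L)$ vanish for $j \le 0$ precisely when $L \in \msD^{\ge 0}(U)$. Feeding this vanishing into the long exact sequence gives: if $L \in {}^\op\msD_S^{\ge 0}$ then the comparison maps are isomorphisms on $H^{\le 0}$, i.e. $L$ is $A$-local; conversely, taking $T = U$, $g = \id$ and $I$ a shift of $L[m]$ (or, more simply, testing against $I = \mathscr O_U[m]$-type generators, or directly feeding $\tau^{\le -1}$ of suitable complexes), non-vanishing of some negative cohomology sheaf of $L$ forces the comparison map to fail to be an equivalence, so $A$-locality implies $L \in {}^\op\msD_S^{\ge 0}$. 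Having established the characterization, the final sentence --- that the inclusion ${}^\op\msD_S^{\ge 0} \hookrightarrow {}^\op\msD_S$ admits a left adjoint --- follows formally: the subcategory of $A$-local objects of a (locally) presentable quasi-category is an accessible localization and hence a reflective subcategory, by \cite{lurie2009higher}*{Proposition 5.5.4.15}, and ${}^\op\msD_S$ is presentable since each fibre is and $p$ is a presentable fibration (Proposition \ref{prop:D is Cartesian}, with \cite{lurie2009higher}*{5.5.3.6}); alternatively one invokes that localizations at a set of morphisms always yield reflective subcategories.

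The step I expect to be the main obstacle is the \textbf{converse direction} --- showing $A$-local $\Rightarrow$ $K \in {}^\op\msD_S^{\ge 0}$ --- because one must be careful that the class $A$ (which only involves morphisms $I \to \tau^{\ge 0}I$ living in the \emph{fibres} over schemes $T$, with $I$ ranging over all complexes and all $T$) is rich enough to detect negative cohomology of $L$ after applying $Lg^*$ and $\RHom_U(-, L)$. The clean way around this is to exploit the freedom in choosing $g$: setting $g = \id_U$ reduces everything to the fibre $\msD(U)^\op$, where the statement is exactly the standard fact that the $\msD^{\ge 0}$ part of the $t$-structure on the derived $\infty$-category is the right-orthogonal localization at $\{I \to \tau^{\ge 0}I\}$; so the genuinely new content is only that passing to general $g$ does not enlarge the local objects, which is immediate since the identity is among the $g$'s. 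A secondary technical point to handle carefully is the bookkeeping of the opposite-category conventions (the "${}^\op$" and the order of $\Ext$ versus $\RHom$ in the source versus target slot), but this is routine once the Dold--Kan translation of \ref{prop:Map in D} is set up.
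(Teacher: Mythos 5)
Your proposal is correct and, at its core, runs on the same two facts as the paper's proof: the orthogonality $\Map_U(M,L)\simeq\ast$ for $M\in\msD^{\le -1}(U)$, $L\in\msD^{\ge 0}(U)$, and the detection of negative cohomology by testing against shifts of the structure sheaf over $g=\id$. The one genuine difference is in the forward direction. The paper moves the problem to the base: it uses the adjunction $\Map_U(Lg^*M,K)\simeq\Map_T(M,Rg_*K)$ of Proposition \ref{p:adjointFunc}(1) together with the fact that $Rg_*$ preserves $\msD^{\ge 0}$ (part (2) of that proposition, which exists precisely for this purpose), and then applies the truncation-invariance lemma on $T$. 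You instead stay on $U$: you apply $Lg^*$ to the truncation triangle $\tau^{\le -1}I\to I\to\tau^{\ge 0}I$, use that $Lg^*$ preserves $\msD^{\le -1}$ (right $t$-exactness of a left derived functor), and run the long exact sequence for $\RHom_U(-,L)$. Both are valid; yours avoids invoking $Rg_*$ entirely at the cost of needing the (easy, but unstated in the paper) $t$-exactness of $Lg^*$ and a small amount of LES bookkeeping to see that vanishing of $H^j\RHom_U(Lg^*\tau^{\le -1}I,L)$ for all $j\le 0$ is exactly what forces an isomorphism on $H^j$ for $j\le 0$, i.e. an equivalence after $\DK\tau_{\le 0}$. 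Your converse, in its ``test against $\mathscr O_U[m]$-generators over $g=\id$'' form, is literally the paper's argument (the paper takes $J$ an injective resolution of $\mathscr O_T$ and computes $\pi_0\Map(J[i],I)=H^iI\ne 0$ against $\pi_0\Map(\tau^{\ge 0}J[i],I)=0$); your first-listed alternative of testing against shifts of $L$ itself would also work but needs the extra retract observation, so the structure-sheaf route is the cleaner one to commit to. The final deduction of the left adjoint from presentability and \cite{lurie2009higher}*{Proposition 5.5.4.15} matches the paper's framing of the surrounding discussion.
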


\begin{proof}
Lets start by assuming that $K\in{}^\op\msD_S^{\ge 0}$ is complex of injectives
on a scheme $U$. We need to show $K$ is $A$-local.

In view of this, fix the truncation map
$I\rightarrow \tau^{\ge 0}I$ of a complex of injective $\mathscr O_T$-modules with quasi-coherent cohomology on a scheme $T$ and a morphism $g:U\rightarrow T$.
 Then
\begin{align*}
\Map(Lg^*\tau^{\ge 0}I,K) &\simeq \Map(\tau^{\ge 0}I, Rg_*K) \quad\mbox{see (\ref{p:adjointFunc}})\\
&\simeq \Map(I,Rg_* K) \quad\mbox{} \\
&\simeq \Map(Lg^*I,K).
\end{align*}

To complete the proof we need to show that every complex with a negative
cohomology sheaf is not $A$-local .
Suppose that $I\in\mathscr D(T)$ has $H^iI\ne0$ for some $i<0$. Let $J$ be an injective resolution of $\mathscr O_T$. Then the map
$$
H^iI=\pi_0\Map(J[i],I)\xrightarrow{\pi_0(\circ\tau^{\ge0})}\pi_0\Map(\tau^{\ge0}J[i],I)=0
$$
is not a bijection, thus $I$ is not $A$-local.
\end{proof}

We are now in a position to equip the category of Cartesian sections with a $t$-structure. As we
do not have a good handle on the homotopy category of the category of Cartesian sections, we will make
use of the following proposition from \cite{lurie2014higher}.

\begin{prop}\label{p:tStruct}
Let $\msC$ be a stable $\infty$-category. Let $i:\msC'\hookrightarrow \msC$ be a full subcategory with
localisation functor $L:\msC\rightarrow \msC'$. Then the following conditions are equivalent.
\begin{enumerate}
\item $\msC'$ is closed under extensions.
\item For each $A,B\in\msC_0$, the natural map
\[
\Ext^1(LA,B)\rightarrow \Ext^1(A,B)
\]
is injective.
\item The full subcategories $\msC^{\ge 0}=\{A|LA\simeq 0\}$ and $\msC^{\le -1}=\{A|LA\simeq A\}$
determine a $t$-structure on $\msC$.
\end{enumerate}
\end{prop}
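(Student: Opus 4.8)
The plan is to take $(\msC^{\ge0},\msC^{\le-1})$ itself as the candidate $t$-structure, observe that all of its axioms but one hold for purely formal reasons, and reduce the entire statement to that one point — the existence of truncation triangles. Write $\msC'=\msC^{\le-1}$ for the essential image of $L$ and $\eta\colon\id\to L$ for the unit; thus $\msC'=\{A:\eta_A\text{ is an equivalence}\}$, and by the defining property of a localization $\eta_X$ induces an equivalence $\Map_\msC(LX,B)\xrightarrow{\ \sim\ }\Map_\msC(X,B)$ for every $X\in\msC$ and every $B\in\msC'$, in particular a bijection $[LX,B]\xrightarrow{\ \sim\ }[X,B]$ on homotopy classes.

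First I would record the consequences of $L$ being a localization of a \emph{stable} category, none of which use (1) or (2). Since $\msC'$ is reflective it is closed under all limits that exist in $\msC$, hence under $\Omega$, so $\msC^{\le-1}$ is closed under $[-1]$. Since $L$ is a left adjoint it preserves colimits, so $LX\simeq0$ forces $L(X[1])\simeq0$, i.e.\ $\msC^{\ge0}$ is closed under $[1]$. And for $A\in\msC^{\ge0}$, $B\in\msC'$ one has $\Map_\msC(A,B)\simeq\Map_{\msC'}(LA,B)\simeq\ast$, which gives both the orthogonality axiom of a $t$-structure and the identification $\msC^{\ge0}={}^\perp\msC'$. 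Hence $(\msC^{\ge0},\msC^{\le-1})$ satisfies every axiom of a $t$-structure save possibly the existence, for each $X$, of a fibre sequence $X'\to X\to X''$ with $X'\in\msC^{\ge0}$ and $X''\in\msC^{\le-1}$; and because $L$ is also the left adjoint to $\msC^{\le-1}\hookrightarrow\msC$, the only candidate triangle is $\fib(\eta_X)\to X\xrightarrow{\ \eta_X\ }LX$, whose right-hand term lies in $\msC^{\le-1}$ automatically. So (3) is equivalent to the assertion that $L\,\fib(\eta_X)\simeq0$ for every $X$.

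Next I would translate this into the $\Ext$-condition and then into closure under extensions. As $\msC'$ is closed under $\Omega$, the positive homotopy groups of $\Map_\msC(\fib(\eta_X),B)$ with $B\in\msC'$ are again homotopy classes of maps into objects of $\msC'$, so $L\,\fib(\eta_X)\simeq0$ iff $[\fib(\eta_X),B]=0$ for all $B\in\msC'$. Mapping the cofibre sequence $\fib(\eta_X)\to X\to LX$ into such a $B$ produces an exact sequence
\[
[LX,B]\xrightarrow{\ \eta_X^*\ }[X,B]\longrightarrow[\fib(\eta_X),B]\longrightarrow\Ext^1_\msC(LX,B)\xrightarrow{\ \eta_X^*\ }\Ext^1_\msC(X,B),
\]
whose first arrow is an isomorphism by the localization property; hence $[\fib(\eta_X),B]\cong\ker\bigl(\eta_X^*\colon\Ext^1_\msC(LX,B)\to\Ext^1_\msC(X,B)\bigr)$, which is precisely the obstruction of (2). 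This already yields (2)$\,\Leftrightarrow\,$(3). For (1)$\,\Leftrightarrow\,$(2): an element of this kernel is a fibre sequence $B\to E\xrightarrow{\ \pi\ }LX$ with $B,LX\in\msC'$ whose pullback $E\times_{LX}X\to X$ along $\eta_X$ admits a section; if $\msC'$ is closed under extensions then $E\in\msC'$, the section yields $\sigma\colon X\to E$ with $\pi\sigma=\eta_X$, which factors through $\eta_X$ as $\sigma=\bar\sigma\circ\eta_X$ by the universal property of $\eta_X$, and $\pi\bar\sigma=\id_{LX}$ because the two sides agree after precomposing with $\eta_X$ and $[LX,LX]\xrightarrow{\ \sim\ }[X,LX]$; so $\bar\sigma$ splits the extension and the class vanishes. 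Conversely, if (3) holds then applying the cohomology functors $H^n$ of the $t$-structure to a cofibre sequence $B\to E\to C$ with $B,C\in\msC^{\le-1}$ forces $H^nE=0$ for $n\ge0$, i.e.\ $E\in\msC^{\le-1}$; so (1) holds.

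I expect the extension-splitting step to be where the real work lies: every other ingredient is formal manipulation of the reflective localization and the stable structure, whereas there one genuinely has to spend hypothesis (1) — the membership $E\in\msC'$ is exactly what unlocks the universal property of $\eta_X$ and lets one promote a section of the pulled-back extension over $X$ to one over $LX$. Some care is also needed to run the identification of $\ker\eta_X^*$ with extension classes, and of the phrase ``admits a section'', at the level of mapping spaces rather than of the triangulated homotopy category, but this is routine given the equivalent models of mapping space recalled in \S\ref{ss:quasicategories}.
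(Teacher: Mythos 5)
The paper gives no argument of its own here---it simply cites \cite{lurie2014higher}*{Proposition 1.2.1.16}---and your proposal is essentially a correct reconstruction of Lurie's proof: reducing (3) to the single assertion $L\fib(\eta_X)\simeq 0$, identifying $[\fib(\eta_X),B]$ with $\ker\bigl(\Ext^1(LX,B)\to\Ext^1(X,B)\bigr)$ via the long exact sequence, and handling (1) by the splitting argument through the universal property of $\eta_X$ are all sound, and the formal verifications of the shift and orthogonality axioms are fine. One step, however, would fail as written. In (3)$\Rightarrow$(1) you pass from ``$H^nE=0$ for $n\ge0$'' to ``$E\in\msC^{\le-1}$''. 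That implication needs the $t$-structure to be left separated, which is not part of the hypotheses: a nonzero object can have vanishing cohomology objects in every degree. The repair is cheap and avoids cohomology functors altogether: for any $t$-structure one has $\msC^{\le-1}=\{E:[A,E]=0\text{ for all }A\in\msC^{\ge0}\}$, and such a right orthogonal is closed under extensions by the three-term exact sequence $[A,B]\to[A,E]\to[A,C]$.

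A second point concerns the statement you are proving. Your computation of $\ker(\eta_X^*)$ uses that $[LX,B]\to[X,B]$ is a bijection, i.e.\ that $B$ lies in $\msC'$; so what you establish is the equivalence of (1), (3) and condition (2) restricted to $B\in\msC'$. That is the correct statement, and it is what \cite{lurie2014higher}*{Proposition 1.2.1.16} actually asserts (the second argument there is $LY$). As printed above, with $B$ ranging over all of $\msC_0$, condition (2) is strictly stronger and is \emph{not} implied by (1) or (3): for the standard $t$-structure on the derived category of a ring of global dimension at least $2$, a non-formal complex $A$ with cohomology in degrees $0$ and $1$ has nonzero connecting map $\tau^{\ge1}A\to(\tau^{\le0}A)[1]$, which for $B=\tau^{\le0}A\notin\msC'$ gives a nonzero class in the kernel of $\Ext^1(LA,B)\to\Ext^1(A,B)$. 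Your reading is the right one, but you should state explicitly that (2) carries the hypothesis $B\in\msC'$.
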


\begin{proof}
This is proposition 1.2.1.16 of \emph{loc. cit.}
\end{proof}

\begin{prop}\label{prop:t-structure on section category}

Let $K\xrightarrow k\Sch_S$ be a diagram of $S$-schemes. Then $\Gamma_{\Cart}(k,{}^\op\mathscr D_S)$ is a presentable stable category. Furthermore  $(\Gamma_{\Cart}(k,{}^\op\mathscr D_S^{\ge0}),\Gamma_{\Cart}(k,{}^\op\mathscr D_S^{\le0}))$ is an accessible left complete t-structure.

\end{prop}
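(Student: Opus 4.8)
The first assertion is Proposition~\ref{p:stable}. For the rest, set $\msC=\Gamma_{\Cart}(k,{}^\op\msD_S)$, a presentable stable quasi-category; for a vertex $m$ of $K$ write $T_m=k(m)$, so that through the Cartesian fibration $p$ one has $\msC\simeq\lim_{m\in K^\op}\msD(T_m)^\op$. The strategy is to feed a reflective localisation of $\msC$, closed under extensions, into the recognition criterion Proposition~\ref{p:tStruct}.

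The reflective localisation is the one of Lemma~\ref{l:isLocalisation} pushed forward to Cartesian sections, and this push-forward is the step I expect to be the main obstacle. Two observations make it work. First, $Lg^*$ carries $\msD^{\le n}$ into $\msD^{\le n}$ (since $g^*$ is right exact), so each ${}^\op\msD_S^{\le n}$ is a sub-Cartesian fibration of ${}^\op\msD_S$; hence $\Gamma_{\Cart}(k,{}^\op\msD_S^{\le n})$ is genuinely $\lim_{m\in K^\op}\msD^{\le n}(T_m)^\op$, and since each $\msD^{\le -1}(T_m)^\op$ is a reflective subcategory of $\msD(T_m)^\op$---it is the strictly coconnective part of the opposite $t$-structure---it is closed under limits and presentable, whence so is $\Gamma_{\Cart}(k,{}^\op\msD_S^{\le -1})$. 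Being closed under limits and accessible in the presentable $\msC$, it is then a reflective localisation of $\msC$, see \cite{lurie2009higher}*{\S5.5.4}. Second, the subcategory of sections annihilated by the corresponding reflector is identified with $\Gamma_{\Cart}(k,{}^\op\msD_S^{\ge0})$ by combining Lemma~\ref{l:isLocalisation} (which identifies the fibrewise local objects) with the mapping-space formula $\Map_\msC(F,G)\simeq\lim_{m\in K^\op}\Map_{\msD(T_m)^\op}(F_m,G_m)$ of Proposition~\ref{prop:Map of Gamma_Cart} and the explicit description of $p$-Cartesian edges in Lemma~\ref{lem:Cartesian edges}; this is the delicate point, as the reflector is not computed fibrewise.

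Granting this, $\Gamma_{\Cart}(k,{}^\op\msD_S^{\le -1})$ is closed under extensions in $\msC$: fibre and cofibre sequences in $\msC$ are computed pointwise in the stable fibres $\msD(T_m)^\op$ (proof of Proposition~\ref{p:stable}), and in each fibre the strictly coconnective part is closed under extensions. Proposition~\ref{p:tStruct} then equips $\msC$ with a $t$-structure whose strictly coconnective aisle is $\Gamma_{\Cart}(k,{}^\op\msD_S^{\le -1})$ and whose connective aisle is $\Gamma_{\Cart}(k,{}^\op\msD_S^{\ge0})$; since $[1]$ on ${}^\op\msD_S$ is the shift $[-1]$ on complexes, the coconnective aisle is $\Gamma_{\Cart}(k,{}^\op\msD_S^{\le -1})[1]=\Gamma_{\Cart}(k,{}^\op\msD_S^{\le0})$. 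Accessibility holds because the connective aisle is the kernel of the accessible localisation of the presentable quasi-category $\msC$, hence presentable.

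For left completeness, the coconnective aisles are $\msC^{\le n}=\Gamma_{\Cart}(k,{}^\op\msD_S^{\le n})\simeq\lim_{m\in K^\op}\msD^{\le n}(T_m)^\op$ by the first observation above; interchanging the two limits gives $\lim_n\msC^{\le n}\simeq\lim_{m\in K^\op}\bigl(\lim_n\msD^{\le n}(T_m)^\op\bigr)\simeq\lim_{m\in K^\op}\msD(T_m)^\op\simeq\msC$, the middle equivalence being the left completeness of each $\msD(T_m)^\op$---equivalently the right completeness of the standard $t$-structure on the derived $\infty$-category of the Grothendieck abelian category of $\mathscr O_{T_m}$-modules, \cite{lurie2014higher}*{Proposition~1.3.5.21}. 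Since this identification is compatible with the coconnective truncation functor $\msC\to\lim_n\msC^{\le n}$, the $t$-structure is left complete.
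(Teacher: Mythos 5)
Your overall strategy coincides with the paper's---both proofs feed a reflective localisation into the recognition criterion of Proposition \ref{p:tStruct}---but the execution differs at every step. You build the reflector onto $\Gamma_{\Cart}(k,{}^\op\msD_S^{\le -1})$ abstractly, from closure under limits plus accessibility; you verify criterion (1) of Proposition \ref{p:tStruct} (closure under extensions, computed pointwise in the fibres); and you prove left completeness by interchanging limits. The paper instead gets its localisation from the explicit fibrewise description of local objects in Lemma \ref{l:isLocalisation}, verifies criterion (2) using the fibre sequence on $I\to\tau^{\ge0}I$ together with Corollary \ref{cor:contractible fibrewise mapping spaces}, and proves left completeness by showing $\bigcap_n\Gamma_{\Cart}(k,{}^\op\msD_S^{\ge n})$ consists of pointwise zero objects. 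Your extension-closure and limit-interchange arguments are clean, and the identification $\msC^{\le -1}=\mathrm{im}(L)=\Gamma_{\Cart}(k,{}^\op\msD_S^{\le -1})$ holds by construction on your side, so the coconnective aisle and the left-completeness step are fine.

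The gap is exactly the step you name as ``the delicate point'' and then do not carry out: showing that $\ker L$ equals $\Gamma_{\Cart}(k,{}^\op\msD_S^{\ge0})$. The inclusion $\Gamma_{\Cart}(k,{}^\op\msD_S^{\ge0})\subseteq\ker L$ does follow from Proposition \ref{prop:Map of Gamma_Cart} and the vanishing of $\Hom(\msD^{\le-1},\msD^{\ge0})$ in each fibre; but the reverse inclusion is where the content lies, and without it Proposition \ref{p:tStruct} only produces a $t$-structure whose connective aisle is the a priori larger left orthogonal of $\Gamma_{\Cart}(k,{}^\op\msD_S^{\le-1})$, not the full subcategory of pointwise coconnective sections named in the statement. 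This is precisely the difficulty caused by the failure of $\tau^{\ge0}$ to commute with $Lg^*$, and merely listing the ingredients does not resolve it: one would have to manufacture, for a section $A$ with some $H^{-i}(A_m)\ne0$, a pointwise-$\msD^{\le-1}$ Cartesian section receiving a nonzero map from $A$, which is not obvious for a general diagram $k$. The paper's way around this is Lemma \ref{l:isLocalisation}: the objects local with respect to the truncation edges are characterised fibrewise because $\Map(Lg^*\tau^{\ge0}I,K)\simeq\Map(\tau^{\ge0}I,Rg_*K)\simeq\Map(I,Rg_*K)$, using that $Rg_*$ preserves $\msD^{\ge n}$ (Proposition \ref{p:adjointFunc}(2)); that is, one does not need the reflector to be computed fibrewise, only the \emph{local objects} to be detected fibrewise, and the $(Lg^*,Rg_*)$ adjunction supplies exactly that. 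You should either import this lemma to pin down the orthogonal complement, or restructure the argument around the $\msD^{\ge0}$ side as the paper does.
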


\begin{proof}

First note that $\Gamma(k,{}^\op\mathscr D_S)\supseteq\Gamma_{\Cart}(k,{}^\op\mathscr D_S)$
is a localisation by \cite[5.5.3.17]{lurie2009higher}. We claim that the localization map $\Gamma(k,{}^\op\mathscr D_S)\to\Gamma_{\Cart}(k,{}^\op\mathscr D_S)$ is left exact. It will be enough to show that $\Gamma_{\Cart}(k,{}^\op\mathscr D_S)\subset\Gamma(k,{}^\op\mathscr D_S)$ is closed under delooping \cite{lurie2014higher}*{Lemma 1.1.3.3 and Proposition 1.4.4.9}. But as we have seen in \ref{l:cartEdge}, an edge in ${}^\op\mathscr D_S$ is Cartesian precisely when if it is given by a quasi-isomorphism $Lg^*I\to J$. This is stable under translation, which proves the claim.

 Given $I\in \Gamma_{\Cart}(k,{}^\op\mathscr D_S^{\ge0})$ there is an edge
 $I\rightarrow \tau^{\ge 0}I$
 which we can complete to a fiber sequence
 $$
I'\to I\xrightarrow{\tau^{\ge0}}I''.
$$
as the Cartesian section category is stable, (\ref{p:stable}). Now by Corollary \ref{cor:contractible fibrewise mapping spaces}, $\Map_{\Gamma_{\Cart}(k,{}^\op\mathscr D_S)}(I',I'')$ is contractible. The second criterion of (\ref{p:tStruct}) is now verified. We thus have a $t$-structure.

To show left completeness, it is enough to show that $\cap_n\Gamma_{\Cart}(k,{}^\op\mathscr D_S^{\ge n})$ only has zero objects. It only contains Cartesian diagrams of cohomologically trivial sheaves, thus pointwise zero objects, which proves the claim.

\end{proof}

Given a stable infinity category with a $t$-structure $(\msD,\msD^{\ge 0}, \msD^{\le 0})$ we can form
its heart, $\msD^\heartsuit = \msD^{\ge 0}\cap \msD^{\le 0}$. It is equivalent to the nerve of an ordinary abelian
category.  In the case where our diagram $k$ is in fact the Cech nerve of a
cover $U\rightarrow T$, so $k:N(U^\bullet/T)\rightarrow \Sch_S$, it is in fact the category of
descent data for a quasi-coherent sheaf on $T$. Hence it is equivalent to the category of
quasi-coherent sheaves on $T$, denoted $\QCoh(T)$.

In the situation where the heart is Grothendieck abelian category, we can now ask if the original stable quasi-category
is the derived quasi-category of this abelian category. This is answered by the following result:

\begin{proposition}\label{prop:bounded below universal}
Let  $(\msD,\msD^{\ge 0}, \msD^{\le 0})$ be a stable $\infty$-category with a right complete $t$-structure.
Suppose further that $\msD^\heartsuit$ is a Grothendieck abelian category.
Then there is a functor
$F:\msD^+(\msD^\heartsuit)\rightarrow\msD$ extending the inclusion of the heart inside $\msD$, which is unique up to a contractible space of equivalences.
Moreover, the following are equivalent.
\begin{enumerate}
\item The functor $F$ is fully faithful.
\item For every pair of objects $X,I\in \msD^\heartsuit$, if $I$ is injective, then
$\Ext^i_\msD(X,I)=0$ for $i>0$.
\end{enumerate}
Further, if these conditions are satisfied the essential image of $F$ is the full subcategory
$\cup_{n\in\mathbf Z} \msD_{\ge -n}$.
\end{proposition}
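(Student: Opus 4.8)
The statement is essentially Lurie's \cite{lurie2014higher}*{Theorem 1.3.3.2 (and 1.3.2.x)} transported to our notation, so the plan is to reduce to it. First I would construct the functor $F$. The heart $\msD^\heartsuit$ sits inside $\msD$ as a full subcategory, and $\msD^+(\msD^\heartsuit)=\ndg(\Ch^+(\msD^\heartsuit\text{-inj}))$ is, by the universal property of the dg-nerve of complexes of injectives, the natural target of a right-derived-functor construction: any additive functor out of $\msD^\heartsuit$ landing in a stable $\infty$-category with a $t$-structure that is compatible with filtered colimits and right complete extends, essentially uniquely, to the bounded-below derived $\infty$-category. Concretely one invokes \cite{lurie2014higher}*{Proposition 1.3.3.7} (or the construction in \S1.3.3 of loc.\ cit.): since $\msD^\heartsuit$ is Grothendieck abelian it has enough injectives, $\Ch^+(\msD^\heartsuit\text{-inj})$ is a full dg-subcategory computing $\msD^+(\msD^\heartsuit)$, and the inclusion of the heart extends to a dg-functor to (a dg-model of) $\msD$ by sending a bounded-below complex of injectives to the realization of the corresponding filtered diagram; right completeness of the $t$-structure guarantees this realization converges. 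Uniqueness up to a contractible space of equivalences follows from the universal property, i.e.\ from the fact that $\msD^+(\msD^\heartsuit)$ is freely generated under the relevant colimits by $\msD^\heartsuit$ together with the shift.

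Next, the equivalence of $(1)$ and $(2)$. One direction is immediate: if $F$ is fully faithful then for $X,I\in\msD^\heartsuit$ with $I$ injective we have $\Ext^i_\msD(X,I)=\pi_{-i}\Map_\msD(FX,FI)=\pi_{-i}\Map_{\msD^+(\msD^\heartsuit)}(X,I)=\Ext^i_{\msD^+(\msD^\heartsuit)}(X,I)$, and the latter vanishes for $i>0$ since $I$ is injective in the abelian category (this is exactly the first part of the Lemma on mapping spaces in the dg-nerve, $\pi_n\Map=H^{-n}\Hom^\bullet$, combined with injectivity). For the converse, assuming $\Ext^i_\msD(X,I)=0$ for $i>0$, I would check that $F_{X,Y}\colon\Map_{\msD^+(\msD^\heartsuit)}(X,Y)\to\Map_\msD(FX,FY)$ is an equivalence for all $X,Y$. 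By devissage it suffices to treat $X,Y$ in the heart, and then (using the truncation functors and the fact, available to us via \ref{prop:bounded below universal}'s hypotheses on the $t$-structure, that both sides are computed by injective resolutions of $Y$) one reduces to the case $Y=I$ injective, where the hypothesis $(2)$ says precisely that the higher homotopy of $\Map_\msD(X,I)$ vanishes, matching $\Map_{\msD^+(\msD^\heartsuit)}(X,I)$, which is discrete. The induction on (co)homological amplitude, pushing the statement from the heart to bounded complexes and then to bounded-below complexes via the right completeness, is routine but is where the bookkeeping lives.

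Finally, the identification of the essential image with $\bigcup_{n}\msD_{\ge -n}$: the image of $\msD^+(\msD^\heartsuit)$ is closed under finite colimits and shifts and contains the heart, hence contains every bounded-below object, so the essential image is contained in $\bigcup_n\msD_{\ge-n}$; conversely every object of $\msD_{\ge -n}$ is built from objects of the heart (its cohomology sheaves) by finitely many extensions and a shift, all of which lie in the essential image once $F$ is fully faithful, and fully faithfulness lets us transport these back along $F$. I expect the main obstacle to be the converse direction of $(1)\Leftrightarrow(2)$: verifying that the Ext-vanishing in $\msD$ is enough to force $F$ to be fully faithful on all of $\msD^+$, which requires carefully matching the injective-resolution computation of $\Map_\msD$ with that of $\Map_{\msD^+(\msD^\heartsuit)}$ using the $t$-structure and right completeness, rather than just on the heart. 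Everything else is either a direct appeal to \cite{lurie2014higher}*{\S1.3.3} or a formal manipulation of $t$-structures.
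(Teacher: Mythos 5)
Your proposal is correct and ultimately takes the same route as the paper: the paper's entire proof is the citation \cite{lurie2014higher}*{Proposition 1.3.3.7}, and your reduction to Lurie's result (together with your sketch of its internals) matches this. The extra detail you supply about the construction of $F$, the Ext-vanishing criterion, and the essential image is consistent with Lurie's argument but is not needed beyond the citation.
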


\begin{proof}
This is \cite[1.3.3.7]{lurie2014higher}.
\end{proof}

For a cosimplicial abelian group $A^\bullet$, we let $\pi^s(A)=H^s(A,\sum(-1)^id^i)$.

\begin{thm}\label{thm:descent for complexes}

The Cartesian fibration ${}^\op\mathscr D^+_S\to\Sch_S$ satisfies fppf descent.

\end{thm}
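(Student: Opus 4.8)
The plan is to reduce fppf descent for $\,{}^\op\msD^+_S$ to descent for quasi-coherent sheaves, using the $t$-structure on the Cartesian section categories established in Proposition \ref{prop:t-structure on section category} together with the universality statement of Proposition \ref{prop:bounded below universal}. Fix an fppf cover $U\xrightarrow fT$ and let $k\colon N(U^\bullet/T)\to\Sch_S$ be its \v Cech nerve. By Definition \ref{d:descent} and Remark \ref{r:descent}, it suffices to show that the canonical functor
$$
\mathscr D^+(T)\xrightarrow{La^*}\Gamma_{\Cart}(k,{}^\op\mathscr D^+_S)^\op
$$
is an equivalence of quasi-categories (I will work with the op-categories throughout and transpose at the end, since ${}^\op\mathscr D_S$ is what was constructed; alternatively phrase everything for $\mathscr D_S$ directly).

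First I would identify the target. By Proposition \ref{prop:t-structure on section category}, $\Gamma_{\Cart}(k,{}^\op\mathscr D^+_S)^\op$ is a presentable stable $\infty$-category with a right complete $t$-structure (the op of the left complete one), and I claim its heart is $\QCoh(T)$: a Cartesian section into the heart ${}^\op\mathscr D_S^{\ge0}\cap{}^\op\mathscr D_S^{\le0}$ over the \v Cech nerve is exactly a descent datum for a quasi-coherent sheaf, and fppf descent for quasi-coherent sheaves (\cite{SGA1}, applied via the remark after Proposition \ref{prop:bounded below universal}) gives $\Gamma_{\Cart}(k,{}^\op\mathscr D_S^\heartsuit)^\op\simeq\QCoh(T)$. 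Since $\QCoh(T)$ is Grothendieck abelian, Proposition \ref{prop:bounded below universal} applies: there is a canonical functor $F\colon\mathscr D^+(\QCoh(T))\to\Gamma_{\Cart}(k,{}^\op\mathscr D^+_S)^\op$ extending the inclusion of the heart, unique up to contractible choice, and one checks this $F$ agrees with $La^*$ (both restrict to the identity on the heart and both are exact, so they agree on $\mathscr D^+$ by the universality clause). So it remains to verify that $F$ is fully faithful and essentially surjective.

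For full faithfulness I would use criterion (2) of Proposition \ref{prop:bounded below universal}: for $X$ quasi-coherent and $I$ an injective object of $\QCoh(T)$, I must show $\Ext^i(X,I)=0$ in $\Gamma_{\Cart}(k,{}^\op\mathscr D^+_S)^\op$ for $i>0$. By Proposition \ref{prop:Map of Gamma_Cart}, the mapping space between two Cartesian sections is the homotopy limit over $N(U^\bullet/T)^\op$ of the fibrewise mapping spaces $\Map_{\mathscr D^+(U^m)}((f^m)^*X,(f^m)^*I)$ — here $(f^m)^*I$ need not be injective, but its negative self-Ext groups still vanish because $f^m$ is flat so $\Ext^i_{U^m}((f^m)^*X,(f^m)^*I)=H^i(U^m,\SHom((f^m)^*X,(f^m)^*I))$ and the sheaf-Hom is quasi-coherent with an acyclic-in-positive-degrees pushforward comparison; more cleanly, one computes this homotopy limit by the Bousfield–Kan spectral sequence $\pi^s\pi_t$ whose $E_2$-page is the \v Cech cohomology of the fppf cover with coefficients in the Ext-sheaves of $X$ into $I$, and fppf cohomological descent (this is the classical input, from \cite{SGA4}*{Exp.\ Vbis}, that \v Cech cohomology of an fppf cover computes the cohomology of a quasi-coherent sheaf, which vanishes in positive degrees on affines) forces these to concentrate in the right spot. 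Essential surjectivity then follows from the final clause of Proposition \ref{prop:bounded below universal}: the essential image of $F$ is $\bigcup_n\Gamma_{\Cart}(k,{}^\op\mathscr D^+_S)^\op_{\ge-n}$, and since we restricted to the \emph{bounded below} fibration every Cartesian section is cohomologically bounded below, hence lies in this union.

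The main obstacle is the spectral-sequence step: one must be careful that the fibrewise mapping complexes $\RHom_{U^m}(L(f^m)^*X, (f^m)^*I)$ are computed with the \emph{derived} pullback and a genuine injective resolution on each $U^m$, so that the homotopy limit over the semicosimplicial diagram really does reduce, on $E_1$, to the \v Cech complex $\prod_m\SHom(X,I)(U^m)$ of an ordinary quasi-coherent sheaf; the identification of $E_2$ with fppf \v Cech cohomology and its vanishing is then the classical descent input, but threading the quasi-categorical mapping-space formula of Proposition \ref{prop:Map of Gamma_Cart} through to that classical statement — in particular checking convergence of the Bousfield–Kan spectral sequence in the unbounded-below-in-$s$ \v Cech direction, which is fine here because the target $I$ is a single injective placed in degree $0$ and $X$ is discrete — is where the real work lies. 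Everything else is formal given Propositions \ref{prop:t-structure on section category}, \ref{prop:Map of Gamma_Cart} and \ref{prop:bounded below universal}.
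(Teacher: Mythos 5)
Your proposal follows essentially the same route as the paper's proof: reduce, via the $t$-structure of Proposition \ref{prop:t-structure on section category} and the universality of the bounded below derived category of the heart (Proposition \ref{prop:bounded below universal}, with the heart identified as $\QCoh(T)$ by classical descent), to the vanishing of $\Ext^i(M,I)$ for $i>0$ and $I$ injective, and then compute this via the mapping-space formula of Proposition \ref{prop:Map of Gamma_Cart} and the Bousfield--Kan spectral sequence for the resulting cosimplicial space. You are in fact somewhat more explicit than the paper about identifying $F$ with $La^*$, about essential surjectivity, and about the convergence and identification of the $E_2$-page.
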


\begin{proof}

Let $U\xrightarrow gT$ be an fppf covering over $S$. Let $N(\Delta^\op)\xrightarrow{k}\Sch_S$ denote its \v Cech nerve. Let $\mathscr C=\Gamma_{\Cart}(k,{}^\op\mathscr D_S)$. By Proposition \ref{prop:t-structure on section category}, we can apply Proposition \ref{prop:bounded below universal}. By descent for quasi-coherent sheaves, we have $(\mathscr C^\op)^\heartsuit\simeq\QCoh(T)$. By applying Proposition \ref{prop:bounded below universal} to $\mathscr C^\op$, it is enough to show that for $M,I\in\QCoh(T)$ with $I$ injective, we have $\Ext^i_{\mathscr C^\op}(M,I)=0$ for $i>0$. By Proposition \ref{prop:Map of Gamma_Cart}, $\Map_{\mathscr C^\op}(M,I)$ is the total complex of a cosimplicial space $m\mapsto\Map_{U_m}(M_{U_m},I_{U_m})$. Therefore, we can apply the Bousfield--Kan spectral sequence to calculate $\Ext^i_{\mathscr C^\op}(M,I)=\pi_0\Map_{\mathscr C^\op}(M,I[i])$. We have \cite{goerss1999simplicial}*{VIII, Proposition 1.15}
$$
E_2^{s,t}=\pi^s\pi_t\Map_{U_\bullet}(M_{U_\bullet},I_{U_\bullet}[i])=\pi^s\Ext^0_{U_\bullet}(M_{U_\bullet},I_{U_\bullet}[i-t]).
$$
We can see that these terms vanish for $i\ne t$. Therefore, we have
$$
E_2^{s,t}=\begin{cases}
\pi^s\Ext^0_{U_\bullet}(M_{U_\bullet},I_{U_\bullet}) & t=i\\
0 & t\ne i.
\end{cases}
$$
But we have $E^{s,t}_2\Rightarrow\pi_{t-s}\mathrm{Tot}\Map_{U_\bullet}(M_{U_\bullet},I_{U_\bullet}[i])$. Therefore, $\Ext^i_{\mathscr C^\op}(M,I)=\pi_0\mathrm{Tot}\Map_{U_\bullet}(M_{U_\bullet},I_{U_\bullet}[i])=0$ as we wanted.

\end{proof}

\begin{cor}

For any $S$-scheme $T$, we have a natural equivalence $\mathscr D^+(T)\to\QC^+(T)$ \cite{ben-zvi2010integral}*{\S3.1}

\end{cor}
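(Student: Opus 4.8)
The plan is to present both $T\mapsto\mathscr D^+(T)$ and $T\mapsto\QC^+(T)$ as Zariski (indeed fppf) sheaves of presentable stable $\infty$-categories on $\Sch_S$ that agree on affines, and then to propagate the affine comparison by descent together with a short dévissage.

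\emph{The affine case.} For $T=\Spec A$ the fibre of ${}^\op\mathscr D^+_S$ over $T$ is $\mathscr D^+(\Spec A)^\op$, and $\mathscr D^+(\Spec A)$ is presentable and stable, equipped with the standard $t$-structure whose heart is $\QCoh(\Spec A)\simeq A\text{-Mod}$. Applying Proposition \ref{prop:bounded below universal} to $\mathscr D^+(\Spec A)$ — the hypothesis $\Ext^i(M,I)=0$ for $i>0$ and $I$ injective being, exactly as in the proof of Theorem \ref{thm:descent for complexes}, the classical vanishing for injective $A$-modules — produces a natural equivalence $\mathscr D^+(A\text{-Mod})\xrightarrow{\ \sim\ }\mathscr D^+(\Spec A)$, i.e. an identification with $\QC^+(\Spec A)=\mathscr D^+(A\text{-Mod})$ in the sense of \cite{ben-zvi2010integral}*{\S3.1}. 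Lemma \ref{lem:Lg^* is pullback} and Proposition \ref{p:adjointFunc} show that the transition functors $Lg^*$ of ${}^\op\mathscr D^+_S$ are carried to the derived pullbacks of modules, so this equivalence is natural in the affine $S$-scheme $\Spec A$.

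\emph{Globalising by descent.} By Theorem \ref{thm:descent for complexes} the presheaf $T\mapsto\mathscr D^+(T)^\op$ classified by ${}^\op\mathscr D^+_S$ satisfies fppf descent; passing to opposite $\infty$-categories, $T\mapsto\mathscr D^+(T)$ satisfies fppf, hence Zariski, descent, so for the \v Cech nerve $N(V^\bullet/T)\to\Sch_S$ of any Zariski cover $V\to T$ one has $\mathscr D^+(T)\simeq\lim_{[n]\in\Delta}\mathscr D^+(V_n)$ (Remark \ref{r:descent}). On the other side, $\QC^+$ is by construction the limit over affines over $T$ of $\mathscr D^+(A\text{-Mod})$, hence a Zariski (and fppf) sheaf, so likewise $\QC^+(T)\simeq\lim_{[n]\in\Delta}\QC^+(V_n)$. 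Now run a dévissage in two steps along the simplicial diagram. If $T$ is separated, take $V=\coprod_i U_i$ for a finite affine open cover; then each $V_n$ is a finite disjoint union of intersections $U_{i_0}\cap\dots\cap U_{i_n}$, which are affine, so the affine case together with the two descent equivalences — compatible, since all structure maps in sight are the derived-pullback ones — gives a natural equivalence $\mathscr D^+(T)\simeq\QC^+(T)$. For general quasi-compact quasi-separated $T$, again take $V=\coprod_i U_i$ for a finite affine open cover; each $V_n$ is then quasi-compact and separated (opens of affines are separated, and the relevant intersections are quasi-compact), so the separated case applies termwise, and descent once more yields a natural equivalence $\mathscr D^+(T)\simeq\QC^+(T)$.

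The step I expect to be the main obstacle is the affine comparison \emph{together with its functoriality in $A$}, i.e. producing $\mathscr D^+(\Spec A)\simeq\mathscr D^+(A\text{-Mod})$ compatibly with derived pullback. Most of this is already packaged in the construction of \S\ref{s:fibration} and in the use of Proposition \ref{prop:bounded below universal} inside the proof of Theorem \ref{thm:descent for complexes}; what remains is essentially bookkeeping — tracking naturality through the dévissage and reconciling the cohomological, bounded-below conventions used here with those of \cite{ben-zvi2010integral}.
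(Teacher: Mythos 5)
The paper states this corollary without proof, so there is no in-text argument to compare against; your proposal --- identify $\mathscr D^+(\Spec A)$ with $\mathscr D^+(A\text{-Mod})=\QC^+(\Spec A)$ via Proposition \ref{prop:bounded below universal}, then globalise using Theorem \ref{thm:descent for complexes} and the finite-affine-cover d\'evissage (separated, then quasi-compact quasi-separated) --- is exactly the intended route and is essentially correct. The only points that deserve care are the ones you flag yourself: making the affine equivalence coherently natural in $A$ (here the ``unique up to a contractible space of equivalences'' clause of Proposition \ref{prop:bounded below universal}, applied to both sides restricted to the heart, is what rescues the bookkeeping), and the silent identification of $\Ext$-groups computed in the category of $\mathscr O_T$-modules with quasi-coherent cohomology with those computed in $A$-modules, which is standard for bounded-below complexes on affines and is the same fact the authors use implicitly in the proof of Theorem \ref{thm:descent for complexes}.
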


We refer the reader to \emph{loc. cit.} for a definition of $\QC^+(T)$.

\begin{cons}

Let $\mathscr X\to K$ be a Cartesian fibration. Suppose that $K$ is a quasi-category with a final object $S\in K$. Let $I,J\in\mathscr X(S)$. Then by Lemma \ref{lem:Cartesian section}, there exists a Cartesian section $K\xrightarrow{I_\bullet}\mathscr X$ with $I_S=I$. We can then form the \emph{mapping prestack} as the left vertical arrow in the pullback diagram
\begin{center}

\begin{tikzpicture}[xscale=2,yscale=1.5]
\node (C') at (0,1) {$\SMap_{\mathscr X}(I,J)$};
\node (D') at (1,1) {$\mathscr X^{/J}$};
\node (C) at (0,0) {$K$};
\node (D) at (1,0) {$\mathscr X$.};
\path[->,font=\scriptsize,>=angle 90]
(C') edge (D')
(C') edge (C)
(D') edge (D)
(C) edge node [above] {$I_\bullet$} (D);
\end{tikzpicture}

\end{center}
Note that as $\SMap_{\mathscr X}(I,J)\to K$ is a right fibration, it can be straightened to the presheaf
$$
T\mapsto\Map_{\mathscr X}(I_T,J)\simeq\Map_{\mathscr X(T)}(I_T,J_T).
$$
In the case of the Cartesian fibration ${}^\op\mathscr D_S\to\Sch_S$, we write ${}^\op\SRHom_S(J,I)=\SMap_{{}^\op\mathscr D_S}(I,J)$. By Theorem \ref{thm:descent for complexes}, it is a stack.

\end{cons}

\begin{rem}

Note that by Lemma \ref{lem:Cartesian section}, the restrict to $S$ map $\Gamma_{\Cart}(K,\mathscr X)\to\mathscr X(S)$ is a trivial fibration. Therefore, $\SMap_{\mathscr X}(I,J)$ does not depend on the choice of $I_\bullet$ up to equivalence.

\end{rem}

\section{An explicit construction of loop groups in $\infty$-topoi}

Let $E\in\mathscr D(S)^+$ be a bounded below complex of $\mathscr O_S$-modules. In this subsection, we give a description of the loop group $\Omega(E,(\mathscr D_S^+)^\simeq)$. We will give a general construction for a pointed object $\ast\xrightarrow xX$ in an $\infty$-topos $\mathscr X$.

By definition, an $\infty$-topos is a left exact localization of a presheaf category $\mathscr P(\mathscr C)$ of a small quasi-category $\mathscr C$ \cite{lurie2009higher}*{\S6.1}. We will use the following equivalent description \cite{lurie2009higher}*{Proposition 5.1.1.1}. The simplicial overcategory $(\Set_\Delta)_{/\mathscr C}$ can be equipped with the contravariant model structure, the fibrant objects of which are exactly the right fibrations \cite{lurie2009higher}*{Corollary 2.2.3.12}. Then the presheaf quasi-category $\mathscr P(\mathscr C)=\Fun(\mathscr C^\op,\mathscr S)$ is equivalent to the categorical nerve $\mathscr P'(\mathscr C)$ of the full simplicial subcategory of $(\Set_\Delta)_{/\mathscr C}$ on fibrant objects. Therefore, we will represent the objects in the $\infty$-topos $\mathscr X$ as right fibrations $X\to\mathscr C$. In this description, a pointed object is a section $\mathscr C\xrightarrow xX$.

The loop group $\Omega(x,\mathscr X)$ is the \emph{\v Cech nerve} of $\mathscr C\xrightarrow xX$ \cite{lurie2009higher}*{below Proposition 6.1.2.11}. Let's recall this notion. A \emph{groupoid object} in $\mathscr X$ is a simplicial object $\Delta^\op\xrightarrow G\mathscr X$ such that for all $n\ge0$ and all partitions $[n]=S\cup S'$ such that $S\cap S'=\{s\}$, the square
\begin{center}

\begin{tikzpicture}[xscale=2,yscale=1.5]
\node (C') at (0,1) {$G_n$};
\node (D') at (1,1) {$G_{S'}$};
\node (C) at (0,0) {$G_S$};
\node (D) at (1,0) {$G_{\{s\}}$};
\path[->,font=\scriptsize,>=angle 90]
(C') edge (D')
(C') edge (C)
(D') edge (D)
(C) edge (D);
\end{tikzpicture}

\end{center}
is homotopy Cartesian. Let $x\xrightarrow fy$ be a morphism in $\mathscr X$. Then its \v Cech nerve is a groupoid object $G$ such that there exists an augmented simplicial object $\Delta_+^\op\xrightarrow{\Tilde G}\mathscr X$, such that

\begin{enumerate}

\item $\Tilde G|\Delta^\op=G$, and

\item the square
\begin{center}

\begin{tikzpicture}[xscale=2,yscale=1.5]
\node (C') at (0,1) {$\Tilde G_1$};
\node (D') at (1,1) {$\Tilde G_{\{1\}}$};
\node (C) at (0,0) {$\Tilde G_{\{0\}}$};
\node (D) at (1,0) {$\Tilde G_{-1}$};
\path[->,font=\scriptsize,>=angle 90]
(C') edge (D')
(C') edge (C)
(D') edge (D)
(C) edge (D);
\end{tikzpicture}

\end{center}
is homotopy Cartesian.

\end{enumerate}

As this involves a lot of homotopy fibre products, the loop group structure is highly inexplicit. Our construction makes the composition, associativity, etc.~diagrams explicit in one bisimplicial set.

\begin{cons}

Let $G$ be the following simplicial object of simplicial sets over $\mathscr C$. Let $G_0=\mathscr C$. If $m>0$, then the $(m,n)$-simplices are the maps $\Delta^m\times\Delta^n\to X$ such that their restriction to $\sk_0\Delta^m\times\Delta^n$ factors through $x$. Both the horizontal and vertical face and degeneracy maps can be given by restriction. Unless we state otherwise, we will think of $G$ as a vertical bisimplicial set, that is as a simplicial object $\Delta^\op\xrightarrow{m\mapsto G_m}\mathscr X$.

\end{cons}

\begin{thm}

The bisimplicial set $G$ is a simplicial right fibration over $\mathscr C$, and it is the \v Cech nerve of the classifying map $\mathscr C\xrightarrow xX$.

\end{thm}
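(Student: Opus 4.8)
The plan is to prove the two assertions separately. For the first, that $G$ is a simplicial right fibration over $\mathscr C$: the faces and degeneracies in the $m$-direction are precomposition with order-preserving maps $\Delta^{m'}\to\Delta^m$, which preserve the condition defining the $(m,n)$-simplices, so they are maps over $\mathscr C$, and it remains to see that each structure map $G_m\to\mathscr C$ is a right fibration. Given $n\ge 1$ and $0<k\le n$, a lifting problem for $G_m\to\mathscr C$ against $\Lambda^n_k\subseteq\Delta^n$ unwinds — using the defining description of the $(m,n)$-simplices and the fact that on $\sk_0\Delta^m\times\Delta^n$ the map into $X$ is the one forced by $x$ together with the prescribed simplex $\Delta^n\to\mathscr C$ — into a lifting problem for the right fibration $X\to\mathscr C$ against the inclusion
\[
\bigl(\Delta^m\times\Lambda^n_k\bigr)\cup_{\sk_0\Delta^m\times\Lambda^n_k}\bigl(\sk_0\Delta^m\times\Delta^n\bigr)\ \subseteq\ \Delta^m\times\Delta^n .
\]
This inclusion is the pushout-product of the monomorphism $\sk_0\Delta^m\subseteq\Delta^m$ with the right anodyne inclusion $\Lambda^n_k\subseteq\Delta^n$, hence is itself right anodyne, so the required lift exists.

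For the second assertion I would identify $G$, augmented by $X$ in degree $-1$ with augmentation $x\colon G_0=\mathscr C\to X$, with the $\infty$-categorical \v Cech nerve $\check C(x)$, whose $m$-th term is the $(m+1)$-fold fibre power $\mathscr C\times_X\dotsb\times_X\mathscr C$. Since $X\to\mathscr C$ is a right fibration every fibre $X_c$ is a Kan complex, so after straightening $X\to\mathscr C$ to a presheaf of spaces on $\mathscr C$ the \v Cech nerve $\check C(x)$ becomes the presheaf $c\mapsto\check C\bigl(x_c\colon\ast\to X_c\bigr)$, whose $m$-th term is $(\Omega_{x_c}X_c)^{\times m}$. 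On the other hand, unwinding the definition of $G$, the fibre of $G_m\to\mathscr C$ over $c$ is the space of maps $\Delta^m\to X_c$ carrying every vertex to $x_c$; since $X_c$ is a Kan complex, restriction along the spine $\Delta^{\{0,1\}}\cup\dotsb\cup\Delta^{\{m-1,m\}}\hookrightarrow\Delta^m$ (inner anodyne) exhibits this as $(\Omega_{x_c}X_c)^{\times m}$. These identifications are natural in $c\in\mathscr C$ and in $[m]\in\Delta_+$ and are compatible with the augmentation, so they assemble to an equivalence of augmented simplicial objects in $\mathscr X$; hence $G$ is the \v Cech nerve of $x$, and therefore is a groupoid object in $\mathscr X$ and coincides with the loop group $\Omega(x,\mathscr X)$.

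The step I expect to be the main obstacle is this assembly in the second part: promoting the pointwise equivalences $(G_m)_c\simeq(\Omega_{x_c}X_c)^{\times m}$ to an equivalence of diagrams indexed by $\Delta_+^\op$, keeping track of naturality simultaneously in the external variable $[m]$ and — through straightening — in the internal variable $c$, and verifying compatibility with the augmentation. The remaining ingredients are routine: the right-anodyne lifting property from the first part, the spine argument inside a Kan complex, and the fact that homotopy limits in $\mathscr X\subseteq\mathscr P(\mathscr C)$ are computed pointwise.
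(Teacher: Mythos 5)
Your proof of the first assertion is correct and is essentially the paper's argument: the lifting problem for $G_m\to\mathscr C$ reduces, exactly as you say, to the right-anodyne inclusion $(\sk_0\Delta^m\times\Delta^n)\cup(\Delta^m\times\Lambda^n_k)\subset\Delta^m\times\Delta^n$, which is the pushout-product of a monomorphism with a right anodyne map.

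The second assertion is where there is a genuine gap, and it sits exactly at the step you flagged — but the problem is worse than an unexecuted verification: the claimed naturality in $[m]$ is false. The spine $\Delta^{\{0,1\}}\cup\dotsb\cup\Delta^{\{m-1,m\}}$ is not preserved by the structure maps of $\Delta$; for instance $d^1\colon[1]\to[2]$ has image $\{0,2\}$, so the face map $d_1\colon(G_2)_c\to(G_1)_c$ is restriction to the long edge of the $2$-simplex, which under your spine identifications is \emph{composition} of the two loops. Loop composition has no strictly defined model on $(\Omega_{x_c}X_c)^{\times\bullet}$, so that family is not a simplicial object in any way compatible with your trivial fibrations, and the asserted equivalence of augmented simplicial objects is not constructed. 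This is not bookkeeping: the entire content of the theorem is that the face maps of $G$ implement loop composition and its coherences, and a levelwise computation of the terms cannot detect whether the face maps are the right ones (a simplicial object can have the correct terms in every degree and still fail to be a groupoid object). The paper sidesteps any comparison map by verifying the two conditions that characterize a \v Cech nerve: (i) the augmentation square is homotopy Cartesian, proved by factoring $G_1$ through $X^{/x}$ and showing $X^{/x}\to\mathscr C$ is a trivial fibration (checked fibrewise, where it reduces to contractibility of $X^{/x}$ for $X$ a Kan complex); and (ii) $G$ is a groupoid object, i.e.\ for every partition $[m]=I\cup J$ with $I\cap J=\{i\}$ the map $G_m\to G_I\times_{G_{\{i\}}}G_J$ is an equivalence, which reduces fibrewise to the anodyne-ness of $((\Delta^I\cup\Delta^J)\times\Delta^n)\cup(\Delta^m\times\partial\Delta^n)\subset\Delta^m\times\Delta^n$, established by the two combinatorial lemmas on partitions and facets. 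If you want to salvage your outline, replace the spine comparison with these two direct verifications; your fibrewise description of $(G_m)_c$ is then only needed in the case $m=1$, and even there it is cleaner to use the overcategory $X^{/x}$ than to choose a section of the spine restriction.
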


\begin{proof}

Since $\mathscr P'(\mathscr C)\to\mathscr X$ is a left exact localization, that is it commutes with finite limits, it is enough to show that $G$ is the \v Cech nerve of $x$ in $\mathscr P'(\mathscr C)$. First of all, let's prove that the $G_m$ are right fibrations. Since $G_0=\mathscr C$, let's assume $m>0$. Then we need to show that for all $0<k\le m$, any lifting problem
\begin{center}

\begin{tikzpicture}[xscale=2,yscale=1.5]
\node (C') at (0,1) {$\Lambda^n_k$};
\node (D') at (1,1) {$G_m$};
\node (C) at (0,0) {$\Delta^n$};
\node (D) at (1,0) {$\mathscr C$};
\path[->,font=\scriptsize,>=angle 90]
(C') edge node [above] {$\sigma$} (D')
(C') edge (C)
(D') edge (D)
(C) edge [dashed] (D')
(C) edge (D);
\end{tikzpicture}

\end{center}
has a solution. Since $\sigma|(\sk_0\Delta^m\times\Delta^n)$ needs to factor through $x$, we don't have to worry about the projection to $\mathscr C$. Therefore, it is enough to show that the inclusion $(\sk_0\Delta^m\times\Delta^n)\cup(\Delta^m\times\Lambda^n_k)\subset\Delta^m\times\Delta^n$ is right anodyne. That in turn holds by \cite{lurie2009higher}*{Corollary 2.1.2.7}.

We next claim that the square
\begin{center}

\begin{tikzpicture}[xscale=2,yscale=1.5]
\node (C') at (0,1) {$G_1$};
\node (D') at (1,1) {$\mathscr C$};
\node (C) at (0,0) {$\mathscr C$};
\node (D) at (1,0) {$X$};
\path[->,font=\scriptsize,>=angle 90]
(C') edge (D')
(C') edge (C)
(D') edge node [right] {$x$} (D)
(C) edge node [above] {$x$} (D);
\end{tikzpicture}

\end{center}
is homotopy Cartesian. The square
\begin{center}

\begin{tikzpicture}[xscale=2,yscale=1.5]
\node (C') at (0,1) {$G_1$};
\node (D') at (1,1) {$X^{/x}$};
\node (C) at (0,0) {$\mathscr C$};
\node (D) at (1,0) {$X$};
\path[->,font=\scriptsize,>=angle 90]
(C') edge (D')
(C') edge (C)
(D') edge node [right] {$\res$} (D)
(C) edge node [above] {$x$} (D);
\end{tikzpicture}

\end{center}
is strict Cartesian, and as the restriction map $X^{/x}\to X$ is a right fibration, it is morever homotopy Cartesian. Therefore, it is enough to show that the projection $X^{/x}\to\mathscr C$ is a trivial fibration, since that will imply that $X^{/x}\to X$ is a right fibrant resolution of $\mathscr C\xrightarrow xX$, and therefore $G_1=\mathscr C\times^h_X\mathscr C$. The projection $X^{/x}\to\mathscr C$ is a right fibration, since it's a composite of such \cite{lurie2009higher}*{Proposition 4.2.1.6}, therefore it is enough to show that it's a contravariant equivalence. That can be checked fibrewise \cite{lurie2009higher}*{Corollary 2.2.3.13}, that is, assuming that $X$ is a Kan complex and $x\in X$, we need to show that $X^{/x}$ is contractible. The restriction map $X^{/x}\to X$ is a right fibration \cite{lurie2009higher}*{Proposition 4.2.1.6}, therefore as $X$ is a Kan complex, $X^{/x}$ is one too. We have a categorical equivalence $X_{/x}\to X^{/x}$ \cite{lurie2009higher}*{Proposition 4.2.1.5} between Kan complexes, therefore it is a weak homotopy equivalence \cite{lurie2009higher}*{Lemma 3.1.3.2}. Finally, $X^{/x}$ is contractible: for all $n\ge0$, a lifting problem
\begin{center}

\begin{tikzpicture}[xscale=2,yscale=1.5]
\node (C') at (0,1) {$\partial\Delta^n$};
\node (D') at (1,1) {$X_{/x}$};
\node (C) at (0,0) {$\Delta^n$};
\node (D) at (1,0) {$\ast$};
\path[->,font=\scriptsize,>=angle 90]
(C') edge (D')
(C') edge (C)
(D') edge (D)
(C) edge [dashed] (D')
(C) edge (D);
\end{tikzpicture}

\end{center}
is a lifting problem
\begin{center}

\begin{tikzpicture}[xscale=2,yscale=1.5]
\node (C') at (0,1) {$\Lambda^{n+1}_{n+1}$};
\node (D') at (1,1) {$X$};
\node (C) at (0,0) {$\Delta^{n+1}$};
\node (D) at (1,0) {$\ast,$};
\path[->,font=\scriptsize,>=angle 90]
(C') edge (D')
(C') edge (C)
(D') edge (D)
(C) edge [dashed] (D')
(C) edge (D);
\end{tikzpicture}

\end{center}
therefore it has a solution.

Let us finally show that $G$ is a groupoid object of right fibrations over $\mathscr C$. That is, we need to show that for all $m\ge2$ and all partitions $I\cup J=[m]$ such that $I\cap J=\{i\}$, the diagram
\begin{center}

\begin{tikzpicture}[xscale=2,yscale=1.5]
\node (C') at (0,1) {$G_m$};
\node (D') at (1,1) {$G_J$};
\node (C) at (0,0) {$G_I$};
\node (D) at (1,0) {$G_{\{i\}}$};
\path[->,font=\scriptsize,>=angle 90]
(C') edge (D')
(C') edge (C)
(D') edge (D)
(C) edge (D);
\end{tikzpicture}

\end{center}
is homotopy Cartesian, or equivalently, the canonical map $G_m\xrightarrow fG_I\times_{G_{\{i\}}}^hG_J$ is a contravariant equivalence. Since $G_J\to G_{\{i\}}=\mathscr C$ is a right fibration, we have $G_I\times_{G_{\{i\}}}^hG_J=G_I\times_{G_{\{i\}}}G_J$. Therefore, it is enough to show that for all $c\in\mathscr C$, the fibre $f_c$ is a weak homotopy equivalence \cite{lurie2009higher}*{Corollary 2.2.3.13}. That is, we can assume that $\mathscr C=\{c\}$. We claim that in this case $f$ is a trivial fibration. That is, we need to show that for every $n\ge1$, every lifting problem
\begin{center}

\begin{tikzpicture}[xscale=2,yscale=1.5]
\node (C') at (0,1) {$\partial\Delta^n$};
\node (D') at (1,1) {$G_m$};
\node (C) at (0,0) {$\Delta^n$};
\node (D) at (1,0) {$G_I\times_{G_{\{i\}}}G_J$};
\path[->,font=\scriptsize,>=angle 90]
(C') edge (D')
(C') edge (C)
(D') edge (D)
(C) edge [dashed] (D')
(C) edge (D);
\end{tikzpicture}

\end{center}
has a solution. Unwinding the construction, it is enough to show that the inclusion
$$
(\sk_0\Delta^m\times\Delta^n)\cup((\Delta^I\cup\Delta^J)\times\Delta^n)\cup(\Delta^m\times\partial\Delta^n)\subset\Delta^m\times\Delta^n
$$
is anodyne. Since $I\cup J=[m]$, we have $\sk_0\Delta^m\subseteq\Delta^I\cup\Delta^J$, therefore we can drop the first component of the union. Then we are done by Lemma \ref{lem:partition is anodyne} \cite{goerss1999simplicial}*{Corollary I.4.6}.

\end{proof}

\begin{lem}\label{lem:partition is anodyne}

Let $m\ge2$ and consider a partition $I\cup J=[m]$ such that $I\cap J\ne\emptyset$. Then the inclusion $\Delta^I\cup\Delta^J\subseteq\Delta^m$ is anodyne.

\end{lem}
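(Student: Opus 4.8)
The plan is to exhibit the inclusion $\Delta^I\cup\Delta^J\hookrightarrow\Delta^m$ as a finite composite of pushouts of horn inclusions, obtained by attaching the simplices of $\Delta^m$ that are not yet present, two at a time.

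First I would dispose of the trivial cases: if $I\setminus J=\emptyset$ then $I\subseteq J$, so $J=[m]$ and the map is an identity; likewise if $J\setminus I=\emptyset$. So assume $A:=I\setminus J$, $B:=I\cap J$ and $C:=J\setminus I$ are all nonempty, giving a partition $[m]=A\sqcup B\sqcup C$. A subset $T\subseteq[m]$ is a simplex of $\Delta^I\cup\Delta^J$ iff $T\cap A=\emptyset$ or $T\cap C=\emptyset$; call $T$ \emph{missing} otherwise, i.e.\ when $T$ meets both $A$ and $C$. Fix $k\in B$. Since $k\notin A\cup C$, for every $T$ we have $T$ missing $\iff$ $T\cup\{k\}$ missing $\iff$ $T\setminus\{k\}$ missing. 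Hence the missing simplices fall into disjoint pairs $\{S,\,S\cup\{k\}\}$ with $k\notin S$, both members missing.

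For $1\le d\le m$ let $Y_d\subseteq\Delta^m$ be the subcomplex consisting of $\Delta^I\cup\Delta^J$ together with every missing $T$ satisfying $|T|\le d$ when $k\notin T$, and $|T|\le d+1$ when $k\in T$. There is no missing simplex of dimension $\le 1$ (a vertex cannot meet both $A$ and $C$, and a missing edge has the form $\{a,c\}$ with $a\in A$, $c\in C$, hence avoids $k$), so $Y_1=\Delta^I\cup\Delta^J$; and every missing simplex has at most $m+1$ vertices, and at most $m$ if it avoids $k$, so $Y_m=\Delta^m$. Thus it suffices to prove $Y_{d-1}\hookrightarrow Y_d$ is anodyne for $2\le d\le m$. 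The simplices in $Y_d$ but not $Y_{d-1}$ are exactly the pairs $\{S,\,T:=S\cup\{k\}\}$ with $|S|=d$ and $S$ missing. For each such top $T$ one verifies that the only subsets of $T$ not already in $Y_{d-1}$ are $T$ itself and $S=T\setminus\{k\}$: any other proper subset $U\subsetneq T$ is either non-missing (so in $\Delta^I\cup\Delta^J$) or missing with $|U|<|T|$, in which case either $k\notin U$ and $|U|\le d-1$, or $k\in U$ and $U\setminus\{k\}$ has size $\le d-1$; in all cases $U\in Y_{d-1}$ by construction. Consequently $\Delta^T\cap Y_{d-1}$ equals the horn $\Lambda^T_k$ of $\Delta^T$ obtained by deleting the single face $T\setminus\{k\}$, and for distinct new tops $T\ne T'$ we get $\Delta^T\cap\Delta^{T'}=\Delta^{T\cap T'}\subseteq\Lambda^T_k\subseteq Y_{d-1}$ (as $T\cap T'$ is a proper subset of $T$ containing $k$). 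Therefore $Y_d$ is the pushout of $Y_{d-1}$ along $\coprod_T(\Lambda^T_k\hookrightarrow\Delta^T)$, the coproduct over the finitely many new tops $T$, each $\Delta^T$ being a simplex of dimension $d\ge 2$. A coproduct of horn inclusions is anodyne and anodyne maps are stable under pushout, so $Y_{d-1}\hookrightarrow Y_d$ is anodyne; composing over $d=2,\dots,m$ finishes the proof.

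The one point that needs care — and the only real content — is the bookkeeping that $\Delta^T\cap Y_{d-1}=\Lambda^T_k$ exactly, equivalently that at stage $d-1$ no proper face of $T$ other than $T\setminus\{k\}$ is still missing; this is precisely where the hypothesis $I\cap J\ne\emptyset$ enters, through the choice $k\in I\cap J$, which guarantees that deleting $k$ never affects whether a simplex meets $A$ or $C$. Everything else is routine.
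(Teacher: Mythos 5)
Your proof is correct, and it takes a genuinely different route from the paper's. The paper argues by induction on the size of the symmetric difference $|(I\cup J)\setminus(I\cap J)|$: each step realizes the inclusion as a pushout of a smaller instance (moving one element of $I\setminus J$ into $J$ at a time), and the base case, where $\Delta^I$ and $\Delta^J$ are two facets of $\Delta^m$, is delegated to Lemma \ref{lem:facets is anodyne}, which shows by a double induction that a union of a proper subset of the facets of a simplex is anodyne. Your argument is instead a direct cell-attachment: fixing $k\in I\cap J$, you pair each ``missing'' simplex $S$ with $k\notin S$ against $S\cup\{k\}$ and fill the corresponding horn $\Lambda^T_k\subseteq\Delta^T$ in order of increasing dimension. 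The key point, which you isolate correctly, is that adjoining or deleting $k$ does not change whether a subset meets both $I\setminus J$ and $J\setminus I$, so at each stage exactly the two faces $T$ and $T\setminus\{k\}$ of each new top simplex are absent from the previous stage, giving $\Delta^T\cap Y_{d-1}=\Lambda^T_k$ on the nose; the remaining bookkeeping ($Y_d$ is a subcomplex, $Y_1=\Delta^I\cup\Delta^J$, $Y_m=\Delta^m$, distinct tops meet inside $Y_{d-1}$) all checks out. What each approach buys: yours is self-contained and exhibits an explicit horn-by-horn decomposition of the inclusion (so it would also show the map is, say, $k$-anodyne in any sense closed under the horns at the vertex $k$), at the cost of more combinatorics; the paper's is shorter on the page but outsources the real content to the facet lemma. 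Both use outer as well as inner horns, which is fine since the statement only claims the inclusion is anodyne.
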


\begin{proof}

We can assume neither $I$ or $J$ is $[m]$. Let us use induction on $k=|(I\cup J)\setminus(I\cap J)|\ge2$. The $k=2$ case is proven by Lemma \ref{lem:facets is anodyne}. Suppose that there are $i\ne j\in I\setminus J$. Then as we have a pushforward diagram
\begin{center}

\begin{tikzpicture}[xscale=3,yscale=1.5]
\node (C') at (0,1) {$\Delta^I\cup\Delta^{J\cup\{i\}}$};
\node (D') at (1,1) {$\Delta^{I\setminus\{j\}}\cup\Delta^{J\cup\{i\}}$};
\node (C) at (0,0) {$\Delta^I\cup\Delta^J$};
\node (D) at (1,0) {$\Delta^{I\setminus\{j\}}\cup\Delta^J,$};
\node at (.5,.5) {$\lrcorner$};
\path[->,font=\scriptsize,>=angle 90]
(D') edge (C')
(C) edge (C')
(D) edge (D')
(D) edge (C);
\end{tikzpicture}

\end{center}
the statement is proven by the induction hypothesis \cite{goerss1999simplicial}*{Proposition 4.2}. The $i\ne j\in J\setminus I$ case can be proven the same way.

\end{proof}

\begin{lem}\label{lem:facets is anodyne}

Let $S\xrightarrow i\Delta^n$ be the inclusion of the union of a proper subset of the set of facets of $\Delta^n$. Then $i$ is anodyne.

\end{lem}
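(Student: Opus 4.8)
The plan is to exhibit $i$ as a finite composite of pushouts of horn inclusions, which makes it anodyne by the very definition of the saturated class of anodyne extensions. Write the chosen collection of facets as $\{\Delta^{[n]\setminus\{j\}}:j\in A\}$ for a nonempty proper subset $A\subsetneq\{0,\dots,n\}$, so that $S=\bigcup_{j\in A}\Delta^{[n]\setminus\{j\}}$. The first observation I would record is that an order-preserving map $[m]\to[n]$ factors through $S$ if and only if its image omits some element of $A$, i.e.\ does not contain $A$. Hence the nondegenerate simplices of $\Delta^n$ \emph{not} lying in $S$ are exactly the faces $\Delta^B$ with $A\subseteq B\subseteq\{0,\dots,n\}$.

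Next I would fix a vertex $k\notin A$ (possible since $A$ is proper) and organize these missing faces into pairs $(\Delta^{A\cup C},\,\Delta^{A\cup C\cup\{k\}})$, where $C$ runs over the subsets of $\{0,\dots,n\}\setminus(A\cup\{k\})$; this is a bijection onto such $C$'s, and in each pair the first face is the facet of the second opposite the vertex $k$. I would then build $\Delta^n$ from $S$ by attaching these pairs in order of increasing $|C|$. The key point is that at the stage indexed by $C$, with $d=|A|+|C|$, the already-constructed subcomplex meets $\Delta^{A\cup C\cup\{k\}}$ in exactly the horn $\Lambda^d_p$, where $p$ is the position of $k$ in the ordered set $A\cup C\cup\{k\}$: a facet of $\Delta^{A\cup C\cup\{k\}}$ obtained by deleting a vertex of $A$ has image omitting that vertex, hence already lies in $S$; a facet obtained by deleting a vertex of $C$ is of the form $\Delta^{A\cup C'\cup\{k\}}$ with $|C'|=|C|-1$, hence was attached at an earlier stage; and all proper faces of $\Delta^{A\cup C}$ are likewise in $S$ or attached earlier. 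Granting this, the stage is a pushout along $\Lambda^d_p\hookrightarrow\Delta^d$, and since $A\neq\emptyset$ forces $d=|A|+|C|\geq 1$, this is a genuine (possibly outer) horn inclusion, which is anodyne.

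Assembling the $2^{\,n-|A|}$ stages, $i$ is a composite of finitely many pushouts of horn inclusions, hence anodyne; this in particular supplies the base case $k=2$ of Lemma \ref{lem:partition is anodyne}. The only step requiring genuine care in a full write-up—and the one I expect to be the main obstacle—is the verification that, at each stage, the current subcomplex meets $\Delta^{A\cup C\cup\{k\}}$ in \emph{precisely} the horn missing the face opposite $k$, i.e.\ that no interior cell of $\Delta^{A\cup C}$ or $\Delta^{A\cup C\cup\{k\}}$ has been filled prematurely and that $\partial\Delta^{A\cup C}$ is already present; this follows from the explicit description of the missing simplices together with the ordering by $|C|$, but it is the bookkeeping worth spelling out. (As a conceptual check one can also note that every facet in $S$ contains $k$, so $S$ is a cone, hence contractible, so $i$ is a monomorphism and a weak homotopy equivalence, hence anodyne; but the explicit filtration above is the cleaner argument to present.)
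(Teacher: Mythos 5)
Your proof is correct, but it is organized quite differently from the one in the paper. The paper argues by a double induction: on the dimension $n$ and on the number $m$ of missing facets. When $m>1$ it glues in one missing facet $\Delta^{[n]\setminus\{i\}}$ along $S\cap\Delta^{[n]\setminus\{i\}}$, which is a union of a proper subset of the facets of $\Delta^{[n]\setminus\{i\}}$, so the attaching map is anodyne by the induction on $n$; the induction on $m$ then finishes, with the horn inclusion as the base case $m=1$. Your argument instead produces an explicit cell structure: after identifying the simplices missing from $S$ as exactly the $\Delta^B$ with $A\subseteq B$, you pair them off via a fixed vertex $k\notin A$ and fill each pair by a single (possibly outer) horn pushout $\Lambda^d_p\subset\Delta^d$, ordered by $|C|$. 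The bookkeeping you flag — that the previously built subcomplex meets $\Delta^{A\cup C\cup\{k\}}$ in precisely $\Lambda^d_p$ — does check out: facets omitting a vertex of $A$ lie in $S$, facets omitting a vertex of $C$ were attached at a strictly smaller stage (note $A\cup C\not\subseteq A\cup C'\cup\{k\}$ whenever $C\not\subseteq C'$, so nothing is filled prematurely), and only the facet opposite $k$ together with the top cell are new. What your approach buys is an explicit presentation of $i$ as a composite of $2^{\,n-|A|}$ horn pushouts, with no appeal to the statement in lower dimensions; what the paper's recursion buys is brevity, at the cost of hiding the cell structure inside the induction. Your parenthetical "conceptual check" (every facet of $S$ contains $k$, so $S$ is star-shaped at $k$, hence contractible, hence $i$ is a monic weak equivalence and therefore a trivial cofibration in the Kan--Quillen model structure) is also a valid third proof, though it leans on the identification of anodyne maps with trivial cofibrations rather than on the generating horns directly.
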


\begin{proof}

Let us use induction on $n\ge1$, the $n=1$ case being satisfied by definition. Suppose $n>1$. Let us use induction on the number $m\ge1$ of facets missing from $S$. If $m=1$, then $S$ is a horn, so $i$ is anodyne by definition. Suppose that $m>1$, and that the facet $\Delta^{[n]\setminus\{i\}}$ is missing from $S$. By the induction hypothesis on $n$, the inclusion $S\cap\Delta^{[n]\setminus\{i\}}\subset\Delta^{[n]\setminus\{i\}}$ is anodyne, thus so is its pushout $S\subset S\cup \Delta^{[n]\setminus\{i\}}$. By the induction hypothesis on $m$, this in turn shows that $i$ is the composite of anodyne maps $S\subset S\cup \Delta^{[n]\setminus\{i\}}\subset\Delta^n$.

\end{proof}
\begin{bibdiv}
\begin{biblist}

\bibselect{bibliography}

\end{biblist}
\end{bibdiv}

\end{document}